\newtheorem{thm}{Theorem}[section]
\newtheorem{cor}[thm]{Corollary}
\newtheorem{claim}[thm]{Claim}
\newtheorem{fact}[thm]{Fact}
\newtheorem{lemma}[thm]{Lemma}
\newtheorem{prop}[thm]{Proposition}
\theoremstyle{definition}
\newtheorem{definition}[thm]{Definition}
\newtheorem{ex}[thm]{Example}
\newtheorem{remark}[thm]{Remark}
\newtheorem{question}[thm]{Question}
\newtheorem{notation}[thm]{Notation}
\newcommand{\QM}{\mathrm{QM}(\Gamma, \mathcal{G})}
\title{Special cube complexes revisited: a quasi-median generalisation}
\date{\today}
\author{Anthony Genevois}
\begin{document}

\maketitle

\begin{abstract}
In this article, we generalise Haglund and Wise's theory of special cube complexes to groups acting on quasi-median graphs. More precisely, we define special actions on quasi-median graphs, and we show that a group which acts specially on a quasi-median graph with finitely many orbits of vertices must embed as a virtual retract into a graph product of finite extensions of clique-stabilisers. In the second part of the article, we apply the theory to fundamental groups of some graphs of groups called \emph{right-angled graphs of groups}. 
\end{abstract}

\tableofcontents

\section{Introduction}

\noindent
Haglund and Wise's theory of special cube complexes \cite{MR2377497} is one of the major contributions of the study of groups acting on CAT(0) cube complexes. The key point of the theory is that, if a group $G$ can be described as the fundamental group of a nonpositively curved cube complex $X$, then there exists a simple and natural condition about $X$ which implies that $G$ can be embedded into a right-angled Artin group $A$. As a consequence, all the properties which are satisfied by right-angled Artin groups and which are stable under taking subgroups are automatically satisfied by our group $G$, providing valuable information about it. Such properties include: 
\begin{itemize}
	\item two-generated subgroups are either free abelian or free non-abelian \cite{MR634562};
	\item any subgroup either is free abelian or surjects onto $\mathbb{F}_2$ \cite[Corollary 1.6]{MR3365774};
	\item being bi-orderable \cite{MR1179860, MR1189240};
	\item being linear (and, in particular, residually finite) \cite{MR1302120}; 
	\item being residually torsion-free nilpotent \cite{DromsThesis, MR1179860, MR3539842}.
\end{itemize}
Even better, as soon as the cube complex $X$ is compact, the theory does not only show that $G$ embeds into $A$, it shows that it embeds in a very specific way: the image of $G$ in $A$ is a \emph{virtual retract}, i.e., there exists a finite-index subgroup $H \leq A$ containing $G$ and a morphism $r : H \to G$ such that $r_{|G}= \mathrm{Id}_G$. This additional information provides other automatic properties satisfied by our group, including:
\begin{itemize}
	\item two-generated subgroups are undistorted \cite{TwoGenUndistorted};
	\item infinite cyclic subgroups are separable \cite{MR2427635};
	\item being conjugacy separable \cite{MR2914863}.
\end{itemize}
One of the most impressive application of the theory of special cube complexes is Agol's proof of the virtual Haken conjecture \cite{MR3104553}, showing that any cubulable hyperbolic group must be \emph{cocompact special}. But the scope of the theory is not restricted to hyperbolic groups and encompasses a large diversity of groups (possibly up to finite index), such as Coxeter groups, many 3-manifold groups, and graph braid groups.

\medskip \noindent
In this article, our goal is to generalise Haglund and Wise's theory by replacing CAT(0) cube complexes with \emph{quasi-median graphs} and right-angled Artin groups with \emph{graph products of groups}. 

\medskip \noindent
As shown in \cite{quasimedian}, quasi-median graphs, a family of graphs generalising median graphs (or equivalently, CAT(0) cube complexes), have a long history in metric graph theory. In \cite{Qm}, we introduced them in geometric group theory by showing how they can be exploited in the study of graph products of groups, lamplighter groups and Thompson-like groups (see also \cite{MR4033512}). It turned out that quasi-median graphs provide a particularly relevant point of view in order to study graph products of groups \cite{AutGP, AutRAAG, EccentricGP}. 

\medskip \noindent
Recall from \cite{GreenGP} that, given a simplicial graph $\Gamma$ and a collection of groups $\mathcal{G}= \{G_u \mid u \in V(\Gamma) \}$ indexed by the vertices of $\Gamma$, the \emph{graph product} $\Gamma \mathcal{G}$ is the quotient
$$\left( \underset{u \in V(\Gamma)}{\ast} G_u \right) / \langle \langle [g,h]=1, \ g \in G_u, h\in G_v, \{u,v\} \in E(\Gamma) \rangle \rangle$$
where $V(\Gamma)$ and $E(\Gamma)$ denote the vertex- and edge-sets of $\Gamma$. For instance, if the groups in $\mathcal{G}$ are all infinite cyclic, then $\Gamma \mathcal{G}$ coincides with the right-angled Artin group $A_\Gamma$; and if all the groups in $\mathcal{G}$ are cyclic of order two, then $\Gamma \mathcal{G}$ coincides with the right-angled Coxeter group $C_\Gamma$. In the same way that the Cayley graphs of $A_\Gamma$ and $C_\Gamma$ constructed from the generating set $V(\Gamma)$ are median graphs (or equivalently, that their cube completions are CAT(0) cube complexes), the Cayley graph
$$\QM := \mathrm{Cayl} \left(\Gamma \mathcal{G},  \bigcup\limits_{u \in V(\Gamma)} G_u \backslash \{1\} \right)$$
of $\Gamma \mathcal{G}$ turns out to be a quasi-median graph. 

\medskip \noindent
So, given a group $G$ acting on a quasi-median graph $X$, we want to identify a simple condition on the action $G \curvearrowright X$ which implies that $G$ naturally embeds into a graph product, possibly as a virtual retract. As shown in Sections \ref{section:warmup} and \ref{section:embedding}, the following definition includes naturally the groups considered in Haglund and Wise's theory.

\begin{definition}
Let $G$ be a group acting faithfully on a quasi-median graph $X$. The action is \emph{hyperplane-special} if
\begin{itemize}
	\item for every hyperplane $J$ and every element $g \in G$, $J$ and $gJ$ are neither transverse nor tangent;
	\item for all hyperplanes $J_1,J_2$ and every element $g \in G$, if $J_1$ and $J_2$ are transverse then $J_1$ and $gJ_2$ cannot be tangent.
\end{itemize}
The action is \emph{special} if, in addition, the action $\mathfrak{S}(J) \curvearrowright \mathscr{S}(J)$ is~free for every hyperplane~$J$ of $X$. (Here, $\mathscr{S}(J)$ denotes the collection of all the sectors delimited by $J$, i.e., the connected components of the graph obtained from $X$ by removing the interiors of all the edges dual to $J$; and $\mathfrak{S}(J)$ denotes the image of $\mathrm{stab}(J)$ in the permutation group of $\mathscr{S}(J)$.)
\end{definition}

\noindent
The main result of this article is the following embedding theorem. (We refer to Theorems \ref{thm:IntroEmbedding} and \ref{thm:IntroRetract} for more precise statements.)

\begin{thm}\label{thm:IntroBig}
Let $G$ be a group which acts specially on a quasi-median graph with finitely many orbits of vertices. Then $G$ embeds as a virtual retract into a graph product of finite extensions of clique-stabilisers. 
\end{thm}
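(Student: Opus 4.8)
The plan is to mimic the classical Haglund–Wise strategy, but working with the quasi-median graph $\QM$ associated to a graph product in place of the Salvetti-type cube complex, and with hyperplanes replaced by the hyperplanes of quasi-median graphs (whose combinatorics are governed by sectors and the groups $\mathfrak{S}(J)$). Concretely, let $G \curvearrowright X$ be the given special action with finitely many vertex-orbits. First I would build the target graph product $\Gamma\mathcal{G}$ from the \emph{hyperplanes} of $X$: the vertex set $V(\Gamma)$ should be a transversal for the $G$-orbits of hyperplanes, two vertices being joined by an edge precisely when the corresponding hyperplanes have $G$-translates that are transverse. The vertex-group attached to a hyperplane $J$ should be a finite extension of its clique-stabiliser — the natural candidate being $\mathfrak{S}(J) \ltimes (\text{clique-stabiliser})$, or more precisely the group recording how $\mathrm{stab}(J)$ permutes and acts on the sectors $\mathscr{S}(J)$. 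The specialness hypothesis (the $\mathfrak{S}(J) \curvearrowright \mathscr{S}(J)$ action is free, and the no-transverse/no-tangent self-translation conditions) is exactly what makes these local data glue into the commutation and non-commutation relations of a graph product rather than collapsing.

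The core of the construction is a \emph{local isometry} (or local convexity) argument. I would define a map from $X$, or from the quotient $G \backslash X$, into the quasi-median graph $\QM$ of the candidate graph product, built by sending an edge dual to a hyperplane $J$ to a generator lying in the vertex-group of $J$, using the sector structure to decide \emph{which} group element. The first prong of hyperplane-specialness (no self-osculation/tangency, no self-crossing) guarantees this map is well defined and locally injective on hyperplanes; the second prong (transverse hyperplanes never become tangent under translation) guarantees that the transversality relation descends consistently to the defining graph $\Gamma$, so that squares in $X$ map to squares (commutators) in $\QM$ and no spurious tangencies are created. The goal is to show the induced map of quotient graphs is a local isometry onto a \emph{convex} (or at least combinatorially gated) subgraph, which by the canonical completion/retraction machinery for quasi-median graphs then yields the embedding $G \hookrightarrow \Gamma\mathcal{G}$.

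The virtual-retract conclusion is obtained, as in the cubical case, from \emph{canonical completion and retraction}. Having realised $G\backslash X$ (or a suitable based version) as a locally isometric image inside $\QM$, I would invoke the finiteness hypothesis — finitely many orbits of vertices, hence a compact quotient — to run the completion construction: one thickens the local isometry to a finite-index covering of $\QM$ into which $G\backslash X$ embeds as a retract, and the retraction lifts to the desired homomorphism $r$ splitting the inclusion on $G$. Passing to fundamental groups (with $\pi_1(G\backslash X)\cong G$ via the action being free enough after the specialness normalisation) converts the topological retraction into the algebraic virtual retract $G \hookrightarrow \Gamma\mathcal{G}$.

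The main obstacle, I expect, is the quasi-median replacement for the hyperplane combinatorics of CAT(0) cube complexes: in a cube complex each hyperplane separates the space into exactly two halfspaces, whereas a quasi-median hyperplane $J$ delimits a whole family of sectors $\mathscr{S}(J)$, and the relevant local group is $\mathfrak{S}(J)$ acting on these sectors rather than a $\mathbb{Z}/2$. Making the canonical completion/retraction argument work in this setting — in particular, ensuring the completion stays \emph{finite} when clique-stabilisers are infinite, which forces the ``finite extensions of clique-stabilisers'' in the statement — is where the genuine new work lies. Everything else is a faithful, if technically heavier, transcription of the special-cube-complex proof, with \emph{freeness of} $\mathfrak{S}(J)\curvearrowright\mathscr{S}(J)$ playing the role that orientability and non-self-osculation of hyperplanes play classically.
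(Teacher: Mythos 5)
The embedding half of your plan is essentially the paper's: the paper builds exactly your graph $\Gamma$ (orbit representatives of hyperplanes, edges recording transverse $G$-translates) and obtains the embedding by labelling sectors and paths, working $G$-equivariantly on $X$ itself rather than via a local isometry of quotients (Theorem \ref{thm:BigEmbedding}). One correction, though: your candidate vertex-group ``$\mathfrak{S}(J) \ltimes (\text{clique-stabiliser})$'' is not the right object. Since special actions have trivial vertex-stabilisers (Lemma \ref{lem:TrivialVertexStab}), the clique-stabiliser already injects into $\mathfrak{S}(J)$ as a finite-index subgroup (Proposition \ref{prop:SJclique}), so your semidirect product is redundant; what the construction actually requires is a group acting freely \emph{and transitively} on $\mathscr{S}(J)$, and freeness of $\mathfrak{S}(J) \curvearrowright \mathscr{S}(J)$ alone does not give transitivity. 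The paper takes $G_i = \mathfrak{S}(J_i) \oplus K_i$ with $K_i$ of cardinality the number of $\mathfrak{S}(J_i)$-orbits of sectors ($K_i$ finite by cocompactness, Corollary \ref{cor:VertexGroups}); exact transitivity is what makes cliques of $X$ map onto whole cliques of $\QM$ and hence the image gated --- with a larger group one only gets a convex image (Remark \ref{remark:BiggerGroups}), which is not enough for the retraction step.

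The genuine gap is the virtual-retract step. You invoke ``canonical completion and retraction machinery for quasi-median graphs,'' but no such machinery exists off the shelf, and you yourself concede that making the completion work --- and keeping it finite when clique-stabilisers are infinite --- ``is where the genuine new work lies.'' That is precisely the content the proof must supply, and the proposal asserts it rather than proves it; note that with infinite vertex-groups there is no finite-sheeted cover of the relevant Salvetti-type object to complete into, so the cubical construction does not transcribe. The paper replaces completion/retraction by a purely algebraic argument (Theorem \ref{thm:VirtualRetract}): given the gated subgraph $Y \subset \QM$ on which $H = \varphi(G)$ acts cocompactly, let $\mathcal{J}$ be the hyperplanes tangent to $Y$; since $\Gamma \mathcal{G} \curvearrowright \QM$ is fully rotative, a ping-pong argument (Lemma \ref{lem:PingPong}) shows $Y$ is a fundamental domain for $R := \langle \mathrm{stab}_\circlearrowright(J) \mid J \in \mathcal{J} \rangle$, whence $H \cap R = \{1\}$, $H^+ := \langle R, H \rangle = R \rtimes H$ has finite index in $\Gamma \mathcal{G}$ (because $Y$ is a fundamental domain for $R$ and $H \curvearrowright Y$ is cocompact), and the projection $H^+ \to H$ killing $R$ is the desired retraction. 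Without this mechanism, or an actual construction of a finite completion compatible with infinite vertex-groups, your argument establishes at best the embedding, not the virtual retract --- which is exactly the part of the statement beyond Theorem \ref{thm:IntroEmbedding}.
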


\noindent
As in Haglund and Wise's theory, knowing that the group we are studying is a subgroup of a graph product provides valuable information about it. For instance:

\begin{cor}
Let $G$ be a group which acts specially on a quasi-median graph with finitely many orbits of vertices. Then the following assertions hold.
\begin{itemize}
	\item Assume that clique-stabilisers satisfies Tits' alternative, i.e., every subgroup either contains a non-abelian free subgroup or is virtually solvable. Then $G$ also satisfies Tits' alternative. \cite{MR3365774}
	\item If clique-stabilisers are linear (resp. residually finite), then so is $G$. \cite{GPlinear, GreenGP}
	\item If clique-stabilisers are (bi-)orderable (resp. locally indicable), then so is $G$. \cite{MR2946302, MR3365774}
	\item If clique-stabilisers are a-T-menable (resp. weakly amenable), then so is $G$. \cite{HaagerupGP, Qm, MR3687943}
\end{itemize}
\end{cor}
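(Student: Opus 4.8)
The plan is to deduce the corollary formally from Theorem~\ref{thm:IntroBig}, combining a permanence property of graph products for each listed property with the elementary fact that all six properties pass to subgroups. First I would apply Theorem~\ref{thm:IntroBig} to realise $G$ as a virtual retract---hence in particular as a subgroup---of a graph product $P=\Gamma\mathcal{G}$ whose vertex groups are finite extensions of clique-stabilisers. Because each property is subgroup-closed, it then suffices to show that $P$ enjoys the property whenever the clique-stabilisers do; the virtual-retract refinement is not needed for this corollary (it is what produces the stronger separability-type conclusions recorded in the introduction).

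For a fixed property $\mathcal{P}$ the argument splits into a three-step transfer: $\mathcal{P}$ holds for the clique-stabilisers by hypothesis, it is inherited by the vertex groups of $P$ (finite extensions of clique-stabilisers), and it is inherited by $P$ from its vertex groups. The last step is exactly the cited input: \cite{MR3365774} for Tits' alternative, \cite{GPlinear,GreenGP} for linearity and residual finiteness, \cite{MR2946302,MR3365774} for (bi-)orderability and local indicability, and \cite{HaagerupGP,Qm,MR3687943} for a-T-menability and weak amenability. The middle step---stability under finite extensions---is immediate for Tits' alternative, linearity, residual finiteness, a-T-menability and weak amenability, since containing a nonabelian free subgroup and being virtually solvable are invariant under passing to a finite-index overgroup, and linearity, residual finiteness, the Haagerup property and weak amenability are all inherited by finite extensions.

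The step I expect to be the real obstacle is this middle one for the orderability and local-indicability items, because these properties are \emph{not} stable under finite extensions: a nontrivial finite group is a finite extension of the trivial group yet is neither bi-orderable (it has torsion) nor locally indicable (it admits no surjection onto $\mathbb{Z}$). To get around this I would replace the finite-extension target of Theorem~\ref{thm:IntroBig} by the sharper embedding of Theorem~\ref{thm:IntroEmbedding}, which places $G$ inside a graph product of the clique-stabilisers \emph{themselves}, introducing no finite extensions. This matches the model example---the action of a graph product $\Gamma\mathcal{G}$ on its own quasi-median graph $\QM$, where the clique-stabilisers are the conjugates of the vertex groups $G_u$ and the target graph product is $\Gamma\mathcal{G}$ itself---and I would verify that it is precisely the special (rather than merely hyperplane-special) hypothesis that secures it. With this embedding, (bi-)orderability and local indicability transfer to $G$ directly from the clique-stabilisers via \cite{MR2946302,MR3365774}, completing all six cases.
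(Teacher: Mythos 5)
Your overall scheme is exactly the derivation the paper intends (it prints no separate proof of this corollary, which is presented as an immediate consequence of Theorem \ref{thm:IntroBig} plus the cited permanence results): embed $G$ as a subgroup of a graph product whose vertex groups are finite extensions of clique-stabilisers, use subgroup-closedness of each property, and use the cited combination theorems for graph products. You are also right on two finer points: the virtual-retract refinement is not needed here, and the finite-extension step is harmless for Tits' alternative, linearity, residual finiteness, a-T-menability and weak amenability. Flagging (bi-)orderability and local indicability as the delicate items is correct and is a genuine subtlety that the paper's one-line justification glosses over.

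However, your proposed repair for those two items rests on a misreading of Theorem \ref{thm:IntroEmbedding}, and this is a genuine gap. That theorem does \emph{not} embed $G$ into a graph product of the clique-stabilisers themselves: its vertex groups are $G_i=\mathfrak{S}(J_i)\oplus K_i$, and in the cocompact setting Corollary \ref{cor:VertexGroups} (via Proposition \ref{prop:SJclique}) only says that $K_i$ is finite and that $\mathfrak{S}(J_i)$ contains a clique-stabiliser as a \emph{finite-index} subgroup, not that it equals one. Whenever $\mathfrak{S}(J_i)\curvearrowright\mathscr{S}(J_i)$ is not transitive, $K_i$ is a nontrivial finite group, so $G_i$ has torsion and the target graph product is not bi-orderable; this already happens in the simplest example, $2\mathbb{Z}$ acting on the line, where clique-stabilisers are trivial yet Theorem \ref{thm:IntroEmbedding} produces the target $\mathbb{Z}_2\ast\mathbb{Z}_2$. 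The torsion coming from $K_i$ can be removed by the tool the paper actually provides, Remark \ref{remark:BiggerGroups}, which lets you replace $K_i$ by any sufficiently large group --- say a bi-orderable one --- at the cost of the image being only convex rather than gated (irrelevant here, since you only need a subgroup embedding). But this still leaves the factor $\mathfrak{S}(J_i)$, which is a priori a proper finite extension of a clique-stabiliser: elements of $\mathrm{stab}(J_i)$ may send a clique dual to $J_i$ to a disjoint clique (Lemma \ref{lem:DisjointCliques}) while acting nontrivially on $\mathscr{S}(J_i)$, so specialness does not make $\mathfrak{S}(J_i)$ coincide with the clique-stabiliser image, contrary to what you hope to ``verify''. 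To close the orderability and local-indicability items along your route, you would additionally have to prove that $\mathfrak{S}(J_i)$ itself inherits bi-orderability (resp.\ local indicability) from clique-stabilisers --- for instance by ruling out torsion and controlling the extension --- and neither your argument nor the bare statements of Theorems \ref{thm:IntroEmbedding} and \ref{thm:IntroRetract} supplies this step.
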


\noindent
The fact that the image of our embedding is a virtual retract also provides additional information:

\begin{cor}
Let $G$ be a group which acts specially on a quasi-median graph with finitely many orbits of vertices. Then the following assertions hold.
\begin{itemize}
	\item For every $n \geq 1$, if clique-stabilisers are of type $F_n$, then so is $G$.\cite{MR1293049, MR1317337, MR1377652} In particular, if clique-stabilisers are finitely generated (resp. finitely presented), then so is $G$.
	\item If clique-stabilisers are finitely presented, then the coarse inequality $$\delta_G \prec \max(n \mapsto n^2, \delta_{\mathrm{stab}(C)}, \text{$C$ clique})$$ between Dehn functions holds.\cite{MR1082628, MR1348572, MR1377652, VanKampenGP}
	\item If clique-stabilisers are conjugacy separable, then so is $G$. \cite{MR3427632} 
	\item If clique-stabilisers have their cyclic subgroups separable, then cyclic subgroups of $G$ are separable. \cite{MR3950469}
	\item If clique-stabilisers are finitely generated and have their infinite cyclic subgroups undistorted, then infinite cyclic subgroups in $G$ are undistorted. 
\end{itemize}
\end{cor}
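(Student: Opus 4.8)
The plan is to deduce every item directly from the structural conclusion of Theorem~\ref{thm:IntroBig}. That theorem provides a graph product $P$ whose vertex groups are finite extensions of clique-stabilisers of the action, together with a finite-index subgroup $H \leq P$ containing $G$ and a retraction $r : H \to G$ satisfying $r_{|G} = \mathrm{Id}_G$. Each property $\mathcal{P}$ listed in the statement will be transferred to $G$ along a fixed three-step template: (i) $\mathcal{P}$ is inherited by finite extensions, so the hypothesis on clique-stabilisers propagates to the vertex groups of $P$; (ii) $\mathcal{P}$ passes from the vertex groups to $P$ itself, which is precisely the content of the references cited beside each item; and (iii) $\mathcal{P}$ is inherited by virtual retracts, which together with (i)--(ii) yields the conclusion for $G$. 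Since a virtual retract is a retract of a finite-index subgroup, step (iii) reduces in each case to checking stability under passing to finite-index subgroups and under retracts.

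For type $F_n$, first I would record that commensurability preserves $F_n$ (covering finite extensions and finite-index subgroups), and that a retract of an $F_n$ group is again $F_n$ (the internal splitting $H = \ker(r) \rtimes G$ lets one recover the relevant finiteness of $G$ from that of $H$); combined with the graph-product permanence of type $F_n$, this gives the first item, whose finitely generated and finitely presented cases are just $F_1$ and $F_2$. For the Dehn-function estimate, the point is that a retract is undistorted and its Dehn function is dominated by that of the ambient group, while finite-index subgroups have $\asymp$-equivalent Dehn functions; feeding in the graph-product bound then yields the stated coarse inequality, with the quadratic term coming from the commutation relations.

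The undistortion statement is the cleanest to make precise. Since $r$ is a homomorphism of finitely generated groups it is Lipschitz, so for $g \in G$ one has $|g|_G = |r(g)|_G \leq C |g|_H \leq C |g|_G$; hence $G$ is undistorted in $H$ and, as $H$ has finite index in $P$, undistorted in $P$. Consequently the distortion of a cyclic subgroup $\langle g \rangle$ computed in $G$ is $\asymp$ its distortion in $P$, so it suffices to know that infinite cyclic subgroups are undistorted in $P$. The latter follows from the corresponding property of the vertex groups, which inherit both finite generation and undistortion of their infinite cyclic subgroups because such an extension is quasi-isometric to the clique-stabiliser it contains with finite index.

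The hard part will be step~(iii) for the separability items, especially conjugacy separability. In contrast with type $F_n$, Dehn functions, or undistortion --- all robust metric or homological invariants --- conjugacy separability is fragile under finite extensions and does not pass to retracts without additional input. The correct route uses a hereditary or strengthened form of conjugacy separability that is stable under commensurability and graph products (this is exactly why the Minasyan-type results cited for these items are needed), and the delicate verification is that the virtual retract furnished by Theorem~\ref{thm:IntroBig} is compatible with these stronger properties. Once that compatibility is in place, separability of (infinite) cyclic subgroups follows by the same template, and the corollary is complete.
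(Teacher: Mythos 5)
Your proposal is correct and coincides with the paper's own (implicit) argument: the corollary is stated there without a separate proof, as an immediate consequence of Theorem~\ref{thm:IntroBig} --- $G$ embeds as a virtual retract into a graph product of finite extensions of clique-stabilisers --- combined with the cited permanence results, which is precisely your three-step template of transferring each property through finite extensions, then through graph products, then through retracts of finite-index subgroups. Your added care on the separability items (invoking the hereditary strengthening of conjugacy separability from the Ferov--Minasyan circle of results rather than naive stability under finite extensions and retracts) is exactly the role the paper assigns to its citations, so no divergence from the intended proof arises.
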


\paragraph{A word about the proof of the theorem.} In Section \ref{section:warmup}, we explain how the fundamental group $G$ of a special cube complex $X$ can be embedded into a right-angled Artin group by looking at the action of $G$ on the universal cover of $X$, instead of looking for a local isometry of $X$ to the Salvetti complex of a right-angled Artin group. This construction is next generalised to arbitrary quasi-median graphs in Section \ref{section:embedding} in order to prove:

\begin{thm}\label{thm:IntroEmbedding}
Let $G$ be a group acting specially on a quasi-median graph $X$. 
\begin{itemize}
	\item Fix representatives $\{J_i \mid i \in I\}$ of hyperplanes of $X$ modulo the action of $G$. 
	\item Let $\Gamma$ denote the graph whose vertex-set is $\{J_i \mid i\in I\}$ and whose edges link two hyperplanes if they have two transverse $G$-translates. 
	\item For every $i \in I$, let $G_i$ denote the group $\mathfrak{S}(J_i) \oplus K_i$, where $K_i$ is an arbitrary group of cardinality the number of orbits of $\mathfrak{S}(J_i) \curvearrowright \mathscr{S}(J_i)$. 
\end{itemize}
Then there exists an injective morphism $\varphi : G \hookrightarrow \Gamma \mathcal{G}$, where $\mathcal{G}= \{ G_i \mid i \in I\}$, and a $\varphi$-equivariant embedding $X \hookrightarrow \QM$ whose image is gated.
\end{thm}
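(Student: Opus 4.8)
The plan is to construct the homomorphism $\varphi$ and the map $\Phi : X \to \QM$ simultaneously, by fixing a basepoint $x_0 \in X$, decreeing $\Phi(x_0) = 1$, and ``reading off'' an element of $\Gamma\mathcal{G}$ along edge-paths. The first task is to label oriented edges of $X$ by generators of $\Gamma\mathcal{G}$. For each representative $J_i$, the freeness of $\mathfrak{S}(J_i) \curvearrowright \mathscr{S}(J_i)$, together with the fact that $K_i$ has cardinality the number of orbits, lets me fix a bijection $\iota_{J_i} : \mathscr{S}(J_i) \to G_i = \mathfrak{S}(J_i) \oplus K_i$ intertwining the action of $\mathrm{stab}(J_i)$ (through $\mathfrak{S}(J_i)$) with left multiplication by the first factor. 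I extend this to every hyperplane $J = g J_i$ by $\iota_J := \iota_{J_i} \circ g^{-1}$; this is well defined only up to left multiplication by $\mathfrak{S}(J_i)$, the ambiguity coming precisely from $\mathrm{stab}(J_i)$. To an oriented edge $e$ dual to $J$, running from the sector $s^-$ to the sector $s^+$, I then attach the label $\ell(e) := \iota_J(s^-)^{-1} \iota_J(s^+) \in G_i \setminus \{1\}$. The crucial point is that, because the two values appear in the combination $\iota_J(s^-)^{-1}\iota_J(s^+)$, the left-multiplication ambiguity cancels, so $\ell$ is a genuinely well-defined and $G$-invariant labelling with $\ell(\bar e) = \ell(e)^{-1}$.

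Next I set $\Phi(x) := \ell(e_1)\cdots \ell(e_n)$ for any edge-path $e_1 \cdots e_n$ from $x_0$ to $x$, and $\varphi(g) := \Phi(g x_0)$. The key step here is well-definedness, which I would reduce to the fact (from \cite{Qm}) that in a quasi-median graph any two edge-paths with the same endpoints are related by backtracks and by the boundaries of cliques and of squares. Backtracks are harmless since $\ell(\bar e) = \ell(e)^{-1}$. Along a clique, all edges are dual to a single hyperplane of some type $i$, so the labels lie in $G_i$ and telescope as $\iota_J(s_a)^{-1}\iota_J(s_b)$, yielding exactly the relations internal to $G_i$. For a square, the two transverse hyperplanes $J_1, J_2$ cannot be $G$-translates of one another, for otherwise $J_1$ and some translate $gJ_1$ would be transverse, violating the first special condition; hence their types satisfy $i_1 \neq i_2$ and are adjacent vertices of $\Gamma$. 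Since opposite sides of the square receive equal labels and $G_{i_1}$, $G_{i_2}$ commute in $\Gamma\mathcal{G}$, the two ways around the square multiply to the same element. Thus $\Phi$ is a well-defined graph morphism, $\varphi$ a homomorphism, the $G$-invariance of $\ell$ yields the equivariance $\Phi(gx) = \varphi(g)\Phi(x)$, and $\Phi$ sends each clique of $X$ injectively onto a clique of $\QM$.

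It remains to prove that $\Phi$ is injective with gated image, and this I expect to be the main obstacle. My strategy is to show that $\Phi$ is a local isometry which moreover induces an injection on the set of hyperplanes and reflects their combinatorics, that is, two hyperplanes of $X$ are transverse (respectively osculating) if and only if their images are. This is exactly what the two special conditions buy: the first condition (no translate of a hyperplane is transverse or tangent to it) prevents distinct hyperplanes of $X$ from being identified, or from becoming self-transverse in $\QM$, while the second condition (no inter-osculation) guarantees that osculations are not promoted to crossings, so that the local picture around a vertex of $\Phi(X)$ genuinely matches that of a gated subgraph. Granting this, I would conclude via the quasi-median counterpart of the standard fact that such a hyperplane-faithful local isometry into a quasi-median graph realises its source as a gated subgraph, using that in $\QM$, as in any quasi-median graph, the distance between two vertices equals the number of separating hyperplanes: faithfulness on separating hyperplanes then forces $\Phi$ to be isometric, hence injective. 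Finally, injectivity of $\varphi$ follows formally: if $\varphi(g) = 1$ then $\Phi(gx) = \varphi(g)\Phi(x) = \Phi(x)$ for every $x$, so $g$ fixes $X$ pointwise and $g = 1$ by faithfulness of the action. I anticipate the genuinely delicate verifications to be the faithfulness of $\Phi$ on hyperplanes and the reflection of osculation versus transversality, where the two special conditions must be deployed with care.
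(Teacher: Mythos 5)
Your first two paragraphs reproduce, essentially verbatim, the paper's own construction: the extension of the free action $\mathfrak{S}(J_i) \curvearrowright \mathscr{S}(J_i)$ to a free transitive $G_i$-action (your bijection $\iota_{J_i}$), the labelling of oriented edges by $\iota_J(s^-)^{-1}\iota_J(s^+)$, the cancellation of the $\mathfrak{S}(J_i)$-ambiguity in that combination (this is exactly how the paper proves $G$-invariance in Claim \ref{claim:LabelInvariant}), the verification of well-definedness of $\Phi$ against backtracks, triangles/cliques and squares via Lemma \ref{lem:GeodFlipSquare}, and the fact that cliques map onto cliques (Claims \ref{claim:CliqueLabel} and \ref{claim:ImageTriangles}, which is where the exact cardinality of $K_i$ is used; cf.\ Remark \ref{remark:BiggerGroups}). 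Your explicit remark that in the square case the two hyperplanes cannot be $G$-translates of one another, by the first specialness condition, is a point the paper leaves implicit, and it is correct. Up to here the proposal is sound and coincides with the paper's proof.

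The third paragraph, however, contains a genuine gap: you replace the crux of the theorem by an appeal to ``the quasi-median counterpart of the standard fact'' that a hyperplane-faithful local isometry realises its source as a gated subgraph. No such off-the-shelf statement is available in the quasi-median setting, and proving it would amount to redoing precisely the verifications you defer. What the paper actually does is prove directly (Claim \ref{claim:Embedding}) that every geodesic of $X$ is sent to a \emph{graphically reduced} word: if $\ell([x,y])$ were not reduced, there would be two edges $e_i, e_j$ whose dual hyperplanes lie in the same $G$-orbit, with all intermediate labels in adjacent vertex-groups; the second specialness condition upgrades ``has a transverse translate'' to ``transverse'' for consecutive hyperplanes, Lemma \ref{lem:GeodHypOrder} then transposes them, and iterating brings the two offending edges together, making two hyperplanes of the same orbit tangent or transverse, contradicting the first condition. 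This single swap argument is the step your sketch labels ``delicate'' and never supplies, and it is what yields that $\Phi$ is an isometric embedding (a graphically reduced word of length $n$ is at distance $n$ from $1$ in $\QM$), hence injective --- your claimed deduction of isometricity from ``faithfulness on separating hyperplanes'' is not an alternative route so much as a restatement of what must be proved. Two further cautions: gatedness in quasi-median graphs requires \emph{both} local convexity and containing triangles (Lemma \ref{lem:GatedSub}), so a bare local-isometry principle modelled on the median/cube-complex case would be insufficient (your clique-surjectivity does supply the triangle condition); and Lemma \ref{lem:GatedSub} applies to subgraphs, so injectivity of $\Phi$ must be established \emph{before} the gatedness argument, which is why the paper orders the proof as: reduced words $\Rightarrow$ isometric embedding (Claim \ref{claim:injective}), then triangles (Claim \ref{claim:ImageTriangles}) and local convexity (Claim \ref{claim:ImageLocConvex}, using Lemma \ref{lem:SpanSquare} and specialness exactly as in your ``osculation not promoted to crossing'' remark), then Lemma \ref{lem:GatedSub}.
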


\noindent
Notice that, compared to Theorem \ref{thm:IntroBig}, we do not require the action to have only finitely many orbits of vertices. Under this additional assumption, we observe in Corollary \ref{cor:VertexGroups} that each $G_i$ contains a clique-stabiliser as a finite-index subgroup, concluding the first step towards the proof of Theorem \ref{thm:IntroBig}.

\medskip \noindent
The next step is to show that the image of our embedding is a virtual retract. The key point is that the image of $X \hookrightarrow \QM$ in Theorem \ref{thm:IntroEmbedding} is \emph{gated}, which is a strong convexity condition. Combined with the next statement, the proof of Theorem~\ref{thm:IntroBig} follows.

\begin{thm}\label{thm:IntroRetract}
Let $\Gamma$ be a simplicial graph and $\mathcal{G}$ a collection of groups indexed by $V(\Gamma)$. A gated-cocompact subgroup $H \leq \Gamma \mathcal{G}$ is a virtual retract.
\end{thm}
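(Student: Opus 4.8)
The plan is to use the \emph{gate projection} onto the gated subgraph as the geometric backbone of the retraction, and then to promote this graph-theoretic retraction to a genuine group homomorphism after passing to a suitable finite-index subgroup. Write $X = \QM$ for the quasi-median Cayley graph, on which $\Gamma \mathcal{G}$ acts vertex-transitively, and fix a gated subgraph $Y \subseteq X$ on which $H$ acts cocompactly. Since the action on vertices is transitive, up to replacing $H$ by a conjugate (which does not affect being a virtual retract) I would arrange that the base vertex $1$ lies in $Y$. Let $\mathfrak{g} : X \to Y$ be the gate projection; it is a graph retraction restricting to the identity on $Y$ and, because $H$ stabilises $Y$, it is $H$-equivariant. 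The model to keep in mind is the case where $Y$ is the subgraph spanned by a parabolic subgroup $\langle \Lambda \rangle$ with $\Lambda \subseteq \Gamma$ induced: there the gate projection is induced by the \emph{syllable-deletion} homomorphism $\Gamma \mathcal{G} \to \langle \Lambda \rangle$ erasing every letter lying in a vertex group $G_u$ with $u \notin \Lambda$, and one checks that $\mathfrak{g}(gy) = r(g) \cdot y$ for $y \in Y$, so that $g \mapsto (y \mapsto \mathfrak{g}(gy))$ is literally the retraction, defined on all of $G$. The task is to reproduce such a homomorphism for an arbitrary gated $H$-cocompact $Y$.

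First I would analyse the hyperplane structure relative to $Y$. Being gated, $Y$ is itself a quasi-median graph, and $\mathfrak{g}$ interacts cleanly with hyperplanes: a hyperplane $J$ of $X$ either \emph{crosses} $Y$, or all of $Y$ lies in a single sector of $J$, in which case $\mathfrak{g}$ collapses the remaining sectors of $J$ onto $Y$. Cocompactness of the $H$-action guarantees that the hyperplanes crossing $Y$ form finitely many $H$-orbits. This partition of the hyperplanes of $X$ into those crossing $Y$ and those not is the exact analogue of the partition of $V(\Gamma)$ into $\Lambda$ and its complement in the parabolic model: the non-crossing directions are precisely the ones the retraction must forget, and controlling the finitely many orbits of crossing hyperplanes (together with the clique- and sector-stabiliser data they carry) is what lets one describe $Y$, up to the $H$-action, through a graph-product-like structure.

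Next I would build the finite-index subgroup $P$ and the retraction $r : P \to H$. The candidate assignment is $g \mapsto \phi_g$, where $\phi_g : Y \to Y$ is $y \mapsto \mathfrak{g}(gy)$. For $g$ in the setwise stabiliser $\mathrm{stab}_G(Y)$ this is an automorphism of $Y$ and, by equivariance, $g \mapsto \phi_g$ is multiplicative; moreover the image of $\mathrm{stab}_G(Y)$ in $\mathrm{Aut}(Y)$ contains $H$ with finite index, again by cocompactness. The content is to show that, after deleting the non-crossing directions as in the parabolic case, the assignment extends to a homomorphism $r_0$ defined on a finite-index subgroup $P \leq G$ whose image $Q$ acts on $Y$ and contains (a finite-index copy of) $H$; one then composes with a retraction of $Q$ onto $H$, available because $[Q : H] < \infty$, to obtain $r : P \to H$. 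That $r$ restricts to the identity on $H$ is immediate from $H$-equivariance of $\mathfrak{g}$ together with $\mathfrak{g}|_Y = \mathrm{Id}_Y$, so $H \leq P$ is indeed a retract of a finite-index subgroup.

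The main obstacle, and the step demanding the most care, is exactly this promotion of $\mathfrak{g}$ to an algebraic homomorphism. For a general $g \in G$ the assignment $y \mapsto \mathfrak{g}(gy)$ is \emph{not} multiplicative, since $g$ moves $Y$ off itself and the gate projection does not commute with such a $g$; the identity $\mathfrak{g}(g_1 g_2 \, y) = \mathfrak{g}\bigl(g_1 \, \mathfrak{g}(g_2 y)\bigr)$ holds only when $g_1$ is compatible with the projection. Repairing this is where both the finitely-many-orbits hypothesis and the passage to the finite-index subgroup $P$ are essential: $P$ must be engineered so that its elements permute the orbits of crossing hyperplanes coherently and act trivially enough on the non-crossing data for syllable-deletion to be well defined, and this is precisely where the genuinely quasi-median features — the behaviour of sectors and of clique-stabilisers under $\mathfrak{g}$, absent in the median case — have to be handled. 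Once the homomorphism is in place, verifying that its image is $H$ rather than a strictly larger subgroup, and that it is a retraction, follows routinely from cocompactness and equivariance.
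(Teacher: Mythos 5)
There is a genuine gap, and it sits exactly where you placed your disclaimer: the finite-index subgroup $P$ and the homomorphism $r_0$ are never constructed, and the sentence saying $P$ ``must be engineered so that its elements permute the orbits of crossing hyperplanes coherently and act trivially enough on the non-crossing data for syllable-deletion to be well defined'' is the entire content of the theorem, asserted rather than proved. The assignment $g \mapsto \bigl(y \mapsto \mathfrak{g}(gy)\bigr)$ is multiplicative only on $\mathrm{stab}_{\Gamma \mathcal{G}}(Y)$, and that stabiliser typically has \emph{infinite} index in $\Gamma \mathcal{G}$ (already when $Y$ is a single vertex of $\QM$, where the stabiliser is trivial since the action is free), so no amount of passing to subgroups of it produces the required $P$; conversely, any finite-index $P$ must contain elements moving $Y$ off itself, and for those the identity $\mathfrak{g}(g_1 g_2 y) = \mathfrak{g}\bigl(g_1 \mathfrak{g}(g_2 y)\bigr)$ genuinely fails, as you note. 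The parabolic model is misleading here: the gate projection onto $\langle \Lambda \rangle$ is induced by a group retraction only because $Y$ is a subgroup, whereas a general gated subgraph (e.g.\ an intersection of sectors) carries no underlying algebraic structure, and your sketch offers no mechanism by which a deletion-type homomorphism could exist on a finite-index subgroup. A smaller inaccuracy: you organise the argument around the hyperplanes \emph{crossing} $Y$, but the decisive family turns out to be the hyperplanes \emph{tangent} to $Y$.

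The paper closes this gap by complementing $H$ instead of projecting onto it, using a tool absent from your proposal: the action of $\Gamma \mathcal{G}$ on $\QM$ is \emph{fully rotative}, i.e.\ for every hyperplane $J$ the rotative stabiliser $\mathrm{stab}_{\circlearrowright}(J) = \bigcap \{ \mathrm{stab}(C) \mid C \subset J \ \text{clique} \}$ acts freely and transitively on the sectors of $J$. Taking $\mathcal{J}$ to be the hyperplanes tangent to $Y$ and $R := \langle \mathrm{stab}_{\circlearrowright}(J) \mid J \in \mathcal{J} \rangle$, a ping-pong argument (Lemma \ref{lem:PingPong}) shows that $Y$ is a fundamental domain for $R \curvearrowright \QM$; since $\mathcal{J}$ is $H$-invariant, $R$ is normal in $H^+ := \langle R, H \rangle$, and $H \cap R = \{1\}$ because $H$ preserves $Y$ while non-trivial elements of $R$ move every vertex of $Y$ off $Y$. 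Hence $H^+ = R \rtimes H$, and the retraction is simply the quotient map killing $R$ --- no extension of the gate projection to a homomorphism is ever needed. Finite index of $H^+$ follows because $H^+$ acts on $\QM$ with finitely many orbits of vertices ($Y$ is an $R$-fundamental domain and $H$ is cocompact on $Y$) and $\Gamma \mathcal{G}$ acts freely transitively on vertices. The rotative stabilisers are precisely the supply of group elements folding $\QM$ onto $Y$ that your ``engineering'' of $P$ would need as a starting point; without them, or some substitute, the proposal cannot be completed along the lines you describe.
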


\noindent
Here, a subgroup $H \leq \Gamma \mathcal{G}$ is \emph{convex-compact} if there exists a gated subgraph in $\QM$ on which $H$ acts with finitely many orbits of vertices. It is worth noticing that, combined with Theorem \ref{thm:IntroEmbedding}, Theorem \ref{thm:IntroRetract} implies more generally that gated-cocompact subgroups are virtual retracts in arbitrary groups acting specially on quasi-median graphs with finitely many vertices (see Corollary \ref{cor:GatedRetract}), generalising the fact that convex-cocompact subgroups are virtual retracts in cocompact special groups \cite{MR2377497}.

\paragraph{Applications.} In the second part of the article, we apply the theory of groups acting specially on quasi-median graphs to a specific family of groups originated from \cite{Qm}, namely fundamental groups of \emph{right-angled graphs of groups}. We refer to Section \ref{section:IntroRAGG} for a precise definition, but roughly speaking a graph of groups is said right-angled if its vertex-groups are graph products and if its edge-groups are subgraph products. In Section \ref{section:WhenSpecial}, we characterise precisely when the action of the fundamental group of such a graph of groups on the quasi-median graph constructed in \cite{Qm} is special. 

\medskip \noindent
In order to illustrate how special actions on quasi-median graphs can be exploited, let us conclude this introduction by considering an explicit example (detailed in Section \ref{section:RAGGex}). 

\medskip \noindent
Given a group $A$, define $A^\rtimes$ by the relative presentation
$$\langle A,t \mid [a,tat^{-1}]=1, \ a \in A \rangle.$$
Notice that, if $A$ is infinite cyclic, we recover the group introduced in \cite{MR897431}, which was the first example of fundamental group of a 3-manifold which is not subgroup separable. $A^\rtimes$ is an example of a fundamental group of a right-angled graph of groups. It acts on a quasi-median graph, but this action is not special. Such a negative result is not a flaw in the strategy: as a two-generated group which is neither abelian nor free, $\mathbb{Z}^\rtimes$ cannot be embedded into a right-angled Artin group. Nevertheless, considering a finite cover of the graph of groups defining $A^\rtimes$ naturally leads to a new group, denoted by $A \square A$ and admitting
$$\langle A_1,A_2,t \mid [a_1,a_2]=[a_1,ta_2t^{-1}]=1, \ a_1 \in A_1,a_2 \in A_2 \rangle$$
as a relative presentation, where $A_1$ and $A_2$ are two copies of $A$. Then $A \square A$ is a subgroup of $A^\rtimes$ of index two. Now, as the fundamental group of a right-angled graph of groups, $A \square A$ acts specially on a quasi-median graph. By a careful application of Theorem \ref{thm:IntroEmbedding}, we find that $A \square A$ embeds (as a virtual retract) into the graph product
$$G: = \ \mathbb{Z}_2 \ \text{---} \ A_1 \ \text{---} \ A_2 \ \text{---} \ \mathbb{Z}_2$$
by sending $A_1 \subset A \square A$ to $A_1 \subset G$, $A_2 \subset A \square A$ to $A_2 \subset G$, and $t \in A \square A$ to $xy \in G$ where $x$ and $y$ are generators of the two $\mathbb{Z}_2$.

\paragraph{Acknowledgments.} This work was supported by a public grant as part of the Fondation Math\'ematique Jacques Hadamard.

\section{Preliminary}

\noindent
In this section, we give the basic definitions and properties about quasi-median graphs and graph products of groups which will be needed in the rest of the article.

\paragraph{Quasi-median graphs.} There exist several equivalent definitions of quasi-median graphs; see for instance \cite{quasimedian}. Below is the definition used in \cite{Qm}.

\begin{definition}
A connected graph $X$ is \emph{quasi-median} if it does not contain $K_4^-$ and $K_{3,2}$ as induced subgraphs, and if it satisfies the following two conditions:
\begin{description}
	\item[(triangle condition)] for every vertices $a, x,y \in X$, if $x$ and $y$ are adjacent and if $d(a,x)=d(a,y)$, then there exists a vertex $z \in X$ which adjacent to both $x$ and $y$ and which satisfies $d(a,z)=d(a,x)-1$;
	\item[(quadrangle condition)] for every vertices $a,x,y,z \in X$, if $z$ is adjacent to both $x$ and $y$ and if $d(a,x)=d(a,y)=d(a,z)-1$, then there exists a vertex $w \in X$ which adjacent to both $x$ and $y$ and which satisfies $d(a,w)=d(a,z)-2$.
\end{description}
\end{definition}

\noindent
The graph $K_{3,2}$ is the bipartite complete graph, corresponding to two squares glued along two adjacent edges; and $K_4^-$ is the complete graph on four vertices minus an edge, corresponding to two triangles glued along an edge. The triangle and quadrangle conditions are illustrated by Figure \ref{Quadrangle}.
\begin{figure}
\begin{center}
\includegraphics[scale=0.45]{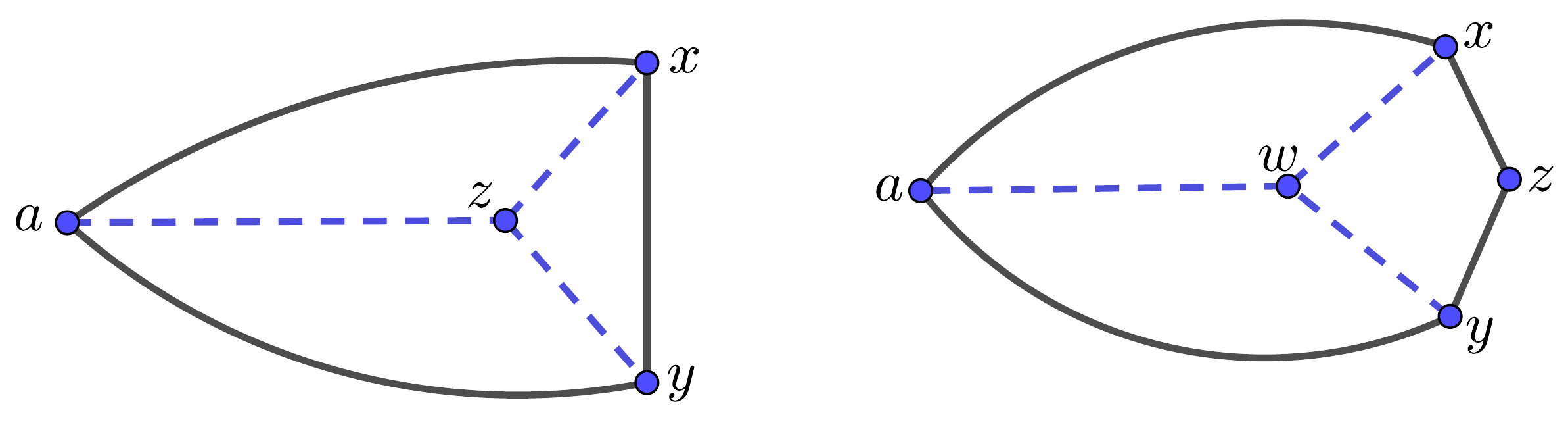}
\caption{Triangle and quadrangle conditions.}
\label{Quadrangle}
\end{center}
\end{figure}

\begin{definition}
Let $X$ be a graph and $Y \subset X$ a subgraph. A vertex $y \in Y$ is a \emph{gate} of an other vertex $x \in X$ if, for every $z \in Y$, there exists a geodesic between $x$ and $z$ passing through $y$. If every vertex of $X$ admits a gate in $Y$, then $Y$ is \emph{gated}.
\end{definition}

\noindent
It is worth noticing that the gate of $x$ in $Y$, when it exists, is unique and minimises the distance to $x$ in $Y$. As a consequence, it may be referred to as the \emph{projection} of $x$ onto $Y$. Gated subgraphs in quasi-median graphs play the role of convex subcomplexes in CAT(0) cube complexes. We record the following useful criterion for future use; a proof can be found in \cite{ChepoiTriangles} (see also \cite[Proposition 2.6]{Qm}).

\begin{lemma}\label{lem:GatedSub}
Let $X$ be a quasi-median graph and $Y \subset X$ a connected induced subgraph. Then $Y$ is gated if and only if it is locally convex (i.e., any 4-cycle in $X$ with two adjacent edges contained in $Y$ necessarily lies in $Y$) and if it contains its triangles (i.e., any 3-cycle which shares an edge with $Y$ necessarily lies in $Y$).
\end{lemma}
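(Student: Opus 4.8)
The plan is to prove the two implications separately, the forward one being quick and the converse being the substance of the lemma.

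For the \emph{necessity} of the two closure conditions I would use nothing but the projection property of gates. Suppose $Y$ is gated. If $a,b,c$ form a triangle with $\{a,b\}\subset Y$, let $y$ be the gate of $c$: there are geodesics from $c$ to $a$ and from $c$ to $b$ passing through $y$, and since $d(c,a)=d(c,b)=1$ these geodesics are the single edges $\{c,a\}$ and $\{c,b\}$, so $y\in\{c,a\}\cap\{c,b\}=\{c\}$ and $c=y\in Y$. Likewise, if $a-b-c-d-a$ is a $4$-cycle with $\{a,b\},\{b,c\}\subset Y$, the gate of $d$ lies on geodesics from $d$ to $a$ and from $d$ to $c$, both of length one, hence equals $d$, so $d\in Y$. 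Thus a gated subgraph is locally convex and contains its triangles.

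For the converse I would show, by induction on $n:=d(x,Y)$, that every vertex $x$ of $X$ has a gate in $Y$, the case $n=0$ being trivial. Assume $n\geq 1$ and pick $y\in Y$ with $d(x,y)=n$. The first step is that $y$ is a \emph{strict} local minimum: every neighbour $z$ of $y$ in $Y$ satisfies $d(x,z)=n+1$; indeed $d(x,z)\in\{n,n+1\}$, and if $d(x,z)=n$ the triangle condition (basepoint $x$) yields a common neighbour $w$ of $y$ and $z$ with $d(x,w)=n-1$, which lies in $Y$ by triangle-closure, contradicting minimality of $n$. Next, choosing a neighbour $x'$ of $x$ on a geodesic to $y$, one has $d(x',Y)=n-1$, so by induction $x'$ has a gate; since a gate is the \emph{unique} closest point of $Y$ (as recorded after the definition) and $y$ realises $d(x',Y)=n-1$, this gate is $y$, whence $d(x',z)=(n-1)+d(y,z)$ for all $z\in Y$. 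It then remains to promote this to $d(x,z)=n+d(y,z)$ for all $z\in Y$, which I would establish by a second induction, on $k:=d(y,z)$, the cases $k=0,1$ being the basepoint and the strict-local-minimum property. Running the inductive step requires first that $Y$ be \emph{isometrically embedded}, so that $z$ has a neighbour $z'\in Y$ with $d(y,z')=k$ and $z'$ in turn a neighbour $z''\in Y$ with $d(y,z'')=k-1$; this is proved from the same hypotheses by taking a shortest path of $Y$ between two of its vertices and noting that it can contain no interior vertex both of whose path-neighbours are at least as close to the chosen endpoint, since the triangle condition with triangle-closure, respectively the quadrangle condition with local convexity (filling the corresponding square inside $Y$), would otherwise furnish a strictly better path, forcing monotonicity and hence geodesicity.

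Granting the embedding, the step excludes the two ways $d(x,z)$ could fail to equal $n+k+1$ when $d(y,z)=k+1$: the \emph{horizontal} possibility $d(x,z)=d(x,z')=n+k$ is killed by the triangle condition together with triangle-closure, and the \emph{diagonal} possibility $d(x,z)=d(x,z')-1$ is killed by applying the quadrangle condition to the path $z''-z'-z$ with basepoint $x$ and using local convexity to place the resulting fourth vertex of the square inside $Y$, whose distances to $x$ and $y$ then contradict the inductive hypothesis. The main obstacle is exactly this local-to-global promotion — turning the information that $y$ is a gate for the nearer vertex $x'$ into the global gate equation for $x$ — and it is here that the quasi-median structure is used in full: the triangle condition and triangle-closure govern configurations inside cliques, the quadrangle condition and local convexity govern configurations inside squares, while the exclusion of $K_4^-$ and $K_{3,2}$ guarantees these local pictures are the only ones occurring. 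The secondary point to watch is the bookkeeping of closest points, ensuring throughout that the minimiser is unique so that the two nested inductions consistently single out the same candidate gate $y$.
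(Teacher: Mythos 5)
The paper contains no internal proof of this lemma---it is imported from \cite{ChepoiTriangles} (see also \cite[Proposition 2.6]{Qm})---so your argument must stand on its own; most of it does. The necessity direction is correct, and in the converse the architecture is sound: outer induction on $n=d(x,Y)$; the strict-local-minimum claim at a closest point $y$ via the triangle condition plus triangle-closure; identification of $y$ as the gate of the intermediate neighbour $x'$ through uniqueness of the closest point; promotion to $d(x,z)=n+d(y,z)$ by an inner induction on $k$. Your diagonal step checks out in detail: the quadrangle condition with basepoint $x$ and apex $z'$ produces $w$ adjacent to $z$ and $z''$ with $d(x,w)=n+k-2$; the $4$-cycle on $z,z',z'',w$ has its two adjacent edges $\{z,z'\}$ and $\{z',z''\}$ in $Y$, so local convexity puts $w\in Y$, and then $d(y,w)\le d(y,z'')+1=k$ forces $d(y,w)=k-2$ by the inner hypothesis, while $d(y,w)\ge d(y,z)-1=k$, a contradiction. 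One bookkeeping slip, though: the horizontal case is \emph{not} killed by the triangle condition together with triangle-closure alone. The triangle vertex $v\in Y$ adjacent to $z$ and $z'$ with $d(x,v)=n+k-1$ may perfectly well satisfy $d(y,v)=k+1$, in which case the inner hypothesis says nothing about $v$ and no contradiction arises until you run the diagonal (quadrangle) argument on $v$ itself. So the diagonal exclusion must be proved first, for \emph{every} vertex of $Y$ at $y$-distance $k+1$, and the horizontal case reduced to it.

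The genuine gap is the justification of your isometric-embedding sub-lemma. You claim that a weak local maximum of $d(v_0,\cdot)$ along a minimal-length path in $Y$ lets the triangle or quadrangle condition ``furnish a strictly better path''; neither mechanism does. In the configuration $d(v_0,v_{i-1})=d(v_0,v_i)-1=d(v_0,v_{i+1})$, the quadrangle condition plus local convexity replaces $v_i$ by a vertex of $Y$ two levels lower, but the resulting path has the \emph{same} length, so minimality of length is not contradicted; you need a secondary potential (say, minimising $\sum_j d(v_0,v_j)$ among minimal-length paths in $Y$). Worse, in a horizontal configuration $d(v_0,v_{i+1})=d(v_0,v_i)$ the triangle vertex $w$ is adjacent to $v_i$ and $v_{i+1}$ only; since nothing makes $w$ adjacent to $v_{i-1}$, it does not splice into the path at all, and neither shortens it nor decreases the potential---one is pushed into a staircase-type descent that your sketch does not contain. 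A cleaner repair stays entirely within your framework: run the inner induction on the \emph{internal} distance $d_Y(y,z)$ rather than on $d_X(y,z)$, proving $d_X(x,z)=n+d_Y(y,z)$. Then $z'$ and $z''$ are simply consecutive vertices on a $Y$-geodesic, so no embedding lemma is needed; your diagonal and (corrected) horizontal analyses go through verbatim with $d_Y$ in place of $d_X$; and since $d_X(x,z)\le n+d_X(y,z)\le n+d_Y(y,z)$, the equality of the two ends forces both the gate equation $d_X(x,z)=d_X(x,y)+d_X(y,z)$ and $d_X(y,\cdot)=d_Y(y,\cdot)$ on $Y$, so the isometric embedding you wanted drops out as a corollary instead of serving as an input.
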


\noindent
Recall that a \emph{clique} is a maximal complete subgraph, and that cliques in quasi-median graphs are gated \cite{quasimedian}. A \emph{prism} is a subgraph which a product of cliques. 

\medskip \noindent
By filling in prisms with products of simplices, quasi-median graphs can be thought of as prism complexes. As proved in \cite{Qm} such prism complexes turn out to be CAT(0) spaces, and in particular simply connected. As a consequence, we deduce the following statement which will be useful later:

\begin{lemma}\label{lem:GeodFlipSquare}
Let $X$ be a quasi-median graph, $x,y \in X$ two vertices, and $\gamma_1,\gamma_2$ two paths between $x$ and $y$. Then $\gamma_2$ can be obtained from $\gamma_1$ by flipping squares, shortening triangles, removing backtracks, and inverses of these operations.
\end{lemma}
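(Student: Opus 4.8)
The plan is to lift the problem to the prism complex of $X$, where the three operations become homotopies across $2$-cells, and then to convert the topological triviality of $\pi_1$ into a sequence of such elementary moves.

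First I would set up the complex. Let $\widehat{X}$ be the prism complex obtained from $X$ by filling each prism with a product of simplices, so that a clique on $k+1$ vertices is filled by a $k$-simplex. By the result recalled above from \cite{Qm}, $\widehat{X}$ is CAT(0), hence contractible and in particular simply connected. The only $2$-cells of $\widehat{X}$ are of two kinds: \emph{triangles}, occurring as the $2$-faces of the simplices that fill the cliques (every $3$-cycle of $X$ spans a complete subgraph, hence lies in a clique), and \emph{squares}, occurring as products $e \times f$ of two edges lying in distinct clique-factors of a prism. I would then observe that the three listed operations correspond exactly to pushing an edge-path across these $2$-cells: shortening a triangle is replacing two sides of a triangular cell by the third, flipping a square is replacing two adjacent sides of a square cell by the opposite two, and removing a backtrack $e\overline{e}$ is elementary free reduction. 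Conversely, an arbitrary push of an edge-path across a triangular or square $2$-cell --- where the path may run along one, two or three edges of the cell --- is a composition of the listed moves: for instance, replacing a single side $e_{12}$ of a square $(1,2,3,4)$ by the complementary three-edge arc is achieved by inserting the backtrack $e_{23}\overline{e_{23}}$ and then flipping. Thus the moves in the statement generate precisely the combinatorial homotopies supported on the $2$-skeleton of $\widehat{X}$.

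Next I would reduce to a single loop. Set $\ell := \gamma_1 \cdot \overline{\gamma_2}$, the edge-loop at $x$ obtained by following $\gamma_1$ and then $\gamma_2$ backwards. It suffices to show that $\ell$ can be reduced to the constant path by the above moves: concatenating such a reduction with $\gamma_2$ and cancelling the backtracking arc $\overline{\gamma_2}\cdot\gamma_2$ then transforms $\gamma_1$ into $\gamma_2$. Since $\widehat{X}$ is simply connected, $\ell$ is null-homotopic, and by the combinatorial version of the Seifert--van Kampen theorem (equivalently, the existence of van Kampen diagrams over the $2$-skeleton of a simply connected complex) the loop $\ell$ bounds a singular disk diagram $D \to \widehat{X}$: a finite contractible planar $2$-complex $D$, homeomorphic to a disk, whose boundary cycle maps onto $\ell$ and whose $2$-cells map to triangular or square $2$-cells of $\widehat{X}$.

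Finally I would run the classical diagram induction on the number of $2$-cells (the area) of a diagram $D$ chosen of minimal area for $\ell$. If $D$ has no $2$-cell, its image is a tree and $\ell$ collapses to a point by deleting backtracks alone. Otherwise I would select a $2$-cell meeting the boundary $\partial D$ along a non-degenerate arc and replace the corresponding subword of $\ell$ by the complementary arc of that cell's boundary --- one of the generating moves by the previous paragraph --- producing a loop that bounds a diagram of strictly smaller area, to which induction applies. The main obstacle is exactly this surgery step: one must guarantee the existence of a suitable boundary cell and handle the degenerate features of $D$ (spurs, cut-vertices, and collapsed cells whose two sides are identified), each of which is absorbed by a backtrack deletion or a triangle collapse. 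This bookkeeping is the only delicate part; granting it, the induction reduces $\ell$ to the constant loop and the lemma follows.
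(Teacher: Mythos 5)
Your proposal is correct and follows essentially the same route as the paper: both arguments rest on the simple connectivity of the CAT(0) prism complex obtained by filling in the prisms of $X$, whose only $2$-cells are triangles and squares, so that flipping squares, shortening triangles and removing backtracks (with their inverses) realise exactly the combinatorial homotopies across the $2$-skeleton. The only difference is one of packaging: where you sketch an explicit van Kampen diagram induction on area (whose degenerate-case bookkeeping you leave granted, and which is indeed standard), the paper simply cites \cite[Statement 9.1.6]{BrownGroupoidTopology}, i.e., the fact that these moves present the fundamental groupoid of the complex.
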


\noindent
Our lemma requires a few definitions. Given an oriented path $\gamma$ in our graph $X$, which we decompose as a concatenation of oriented edges $e_1 \cup \cdots \cup e_n$, one says that $\gamma'$ is obtained from $\gamma$ by
\begin{itemize}
	\item \emph{flipping a square}, if there exists some $1 \leq i \leq n-1$ such that $$\gamma'= e_1 \cdots e_{i-1} \cup a \cup b \cup e_{i+2} \cup \cdots \cup e_n,$$ where $e_i,e_{i+1},b,a$ define an unoriented 4-cycle in $X$;
	\item \emph{shortening a triangle}, if there exists some $1 \leq i \leq n-1$ such that $$\gamma'= e_1 \cdots e_{i-1} \cup a \cup e_{i+2} \cup \cdots \cup e_n,$$ where $e_i,e_{i+1},a$ define an unoriented 3-cycle in $X$;
	\item \emph{removing a backtrack}, if there exists some $1 \leq i \leq n-1$ such that $$\gamma'= e_1 \cup \cdots \cup e_{i-1} \cup e_{i+2} \cup \cdots \cup e_n,$$ where $e_{i+1}$ is the inverse of $e_i$.
\end{itemize}
Lemma \ref{lem:GeodFlipSquare} follows from the simple connectivity of the prism complex associated to $X$ \cite[Theorem 2.120]{Qm} and from the fact that flipping squares and shortening triangles provide the relations of the fundamental groupoid of $X$; see \cite[Statement 9.1.6]{BrownGroupoidTopology} for more details.

\paragraph{Median graphs.} A graph $X$ is a \emph{median graph} if, for all vertices $x,y,z \in X$, there exists a unique vertex $m \in X$ such that
$$\left\{ \begin{array}{l} d(x,y)=d(x,m)+d(m,y) \\ d(x,z)= d(x,m)+d(m,z) \\ d(y,z) = d(y,m)+d(m,z) \end{array} \right..$$
The point $m$ is referred to as the \emph{median point} of the triple $x,y,z$. Median graphs are known to define the same objects as CAT(0) cube complexes. Indeed, the one-sketeton of a CAT(0) cube complex is a median graph; and the cube-completion of a median graph, namely the cube complex obtained by filling in all the one-skeleta of cubes in the graph with cubes, is a CAT(0) cube complex. We refer to \cite{mediangraphs} for more information. 

%
%
%

\paragraph{Hyperplanes.} Similarly to CAT(0) cube complexes, the notion of \emph{hyperplane} is fundamental in the study of quasi-median graphs.

\begin{definition}
Let $X$ be a graph. A \emph{hyperplane} $J$ is an equivalence class of edges with respect to the transitive closure of the relation saying that two edges are equivalent whenever they belong to a common triangle or are opposite sides of a square. We denote by $X \backslash \backslash J$ the graph obtained from $X$ by removing the interiors of all the edges of $J$. A connected component of $X \backslash \backslash J$ is a \emph{sector}. The \emph{carrier} of $J$, denoted by $N(J)$, is the subgraph generated by all the edges of $J$. Two hyperplanes $J_1$ and $J_2$ are \emph{transverse} if there exist two edges $e_1 \subset J_1$ and $e_2 \subset J_2$ spanning a square in $X$; and they are \emph{tangent} if they are not transverse but $N(J_1) \cap N(J_2) \neq \emptyset$. 
\end{definition}
\begin{figure}
\begin{center}
\includegraphics[trim={0 16.5cm 10cm 0},clip,scale=0.45]{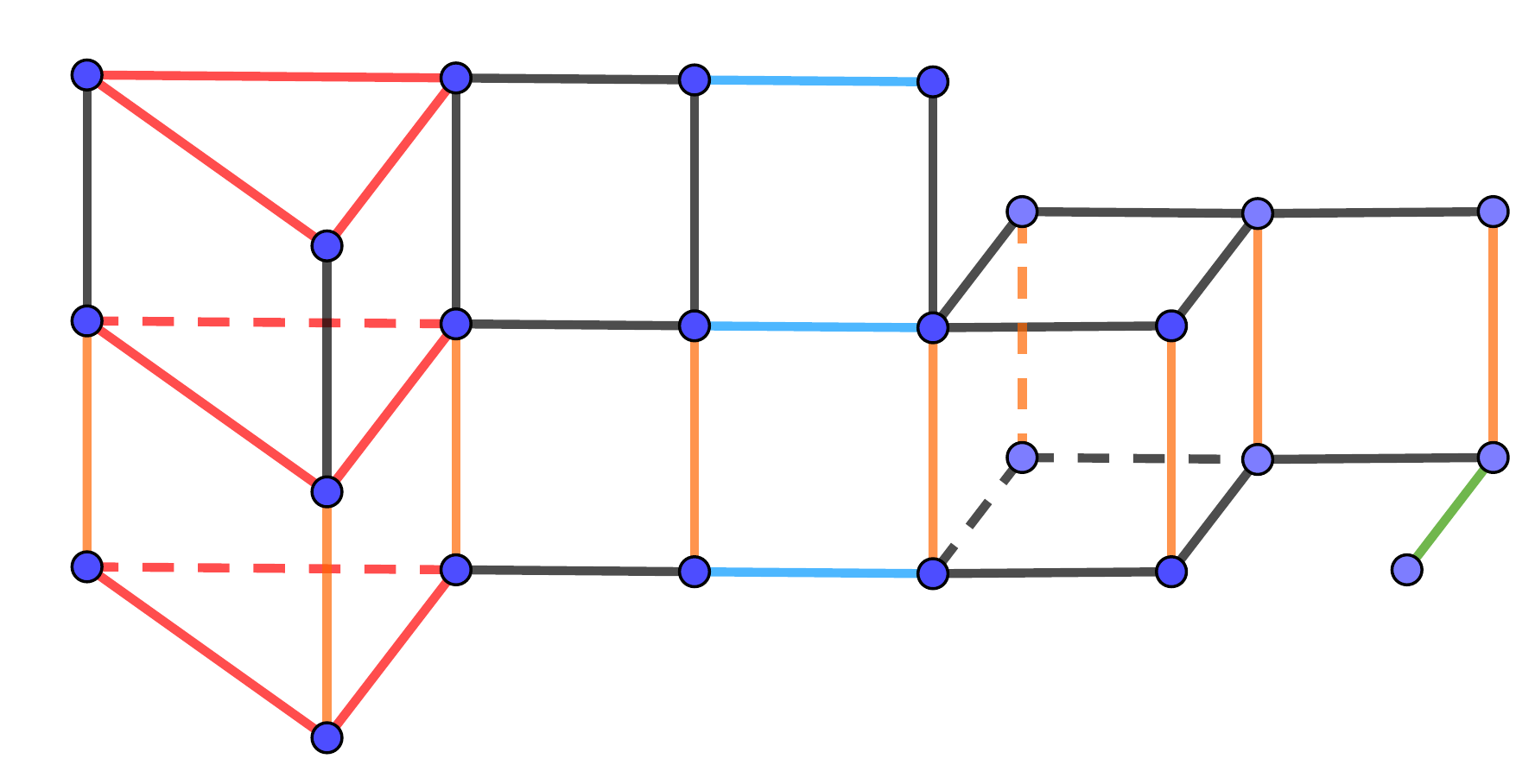}
\caption{A quasi-median graph and some of its hyperplanes.}
\label{figure3}
\end{center}
\end{figure}

\noindent
See Figure \ref{figure3} for examples of hyperplanes in a quasi-median graph. A key observation is that hyperplanes in quasi-median graphs always delimit at least two sectors:

\begin{thm}\label{thm:BigThmQM}\emph{\cite[Proposition 2.15]{Qm}}
Let $X$ be a quasi-median graph and $J$ a hyperplane. The graph $X \backslash \backslash J$ is disconnected, and the carrier and the sectors of $J$ are~gated. 
\end{thm}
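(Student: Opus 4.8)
Let me describe the plan before turning to the two assertions separately: that $X \backslash\backslash J$ is disconnected, and that the carrier $N(J)$ together with every sector of $J$ is gated. I would treat them in that order, since the disconnectedness feeds directly into the analysis of the sectors.

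\emph{Disconnectedness.} Fix an edge $e = \{a,b\}$ dual to $J$ and let $C$ be the clique containing $e$; recall that cliques are gated, so there is a well-defined projection $g : X \to C$. The heart of this part is the claim that an edge $f = \{x,y\}$ of $X$ lies in $J$ as soon as $g(x) \neq g(y)$. To see this, note that $g(x)$ and $g(y)$ are then distinct vertices of the complete graph $C$, hence adjacent, and that the gate property pins down the four distances between $x,y,g(x),g(y)$; an induction on $d(f,C)$ using the quadrangle condition produces a ladder of squares joining $f$ to the edge $\{g(x),g(y)\}$. Since opposite sides of a square are $J$-equivalent by definition, $f$ lies in the same hyperplane as $\{g(x),g(y)\}$, and since all edges of a clique are pairwise $J$-equivalent (any two are linked by a chain of triangles), $\{g(x),g(y)\} \in J$, whence $f \in J$. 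Consequently every edge avoiding $J$ leaves $g$ unchanged, so any path in $X \backslash\backslash J$ has constant projection to $C$. As $g(a) = a \neq b = g(b)$, the vertices $a$ and $b$ lie in distinct components, so $X \backslash\backslash J$ is disconnected (in fact it has at least $|C| \geq 2$ sectors).

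\emph{Sectors are gated.} A sector $S$ is by definition a connected component of $X \backslash\backslash J$, hence a connected induced subgraph, and I would verify gatedness through Lemma \ref{lem:GatedSub}. The key point is that the defining relation of a hyperplane immediately controls squares and triangles: the two edges opposite to a pair of adjacent edges of a square are $J$-equivalent to them, and all three edges of a triangle are mutually $J$-equivalent. Thus if a square has two adjacent edges inside $S$ --- necessarily edges outside $J$ --- then its two remaining edges also avoid $J$, so the fourth vertex is reachable from $S$ within $X \backslash\backslash J$ and the whole square lies in $S$; and if a triangle shares an edge with $S$, that edge avoids $J$, hence so do the other two, and the triangle lies in $S$. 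So $S$ is locally convex and contains its triangles, and Lemma \ref{lem:GatedSub} applies.

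\emph{Carrier, and the main obstacle.} The carrier $N(J)$ is connected because consecutive edges in any chain witnessing the equivalence class $J$ share a triangle or a square. Gatedness would again be obtained from Lemma \ref{lem:GatedSub}, but here the verification is genuinely harder, and I expect this to be the main obstacle: unlike a sector, the carrier contains both the edges of $J$ and the edges transverse to them, so controlling an arbitrary square or triangle abutting $N(J)$ requires understanding the local structure of the carrier. The natural route is to prove that $N(J)$ is exactly the union of the cliques dual to $J$, and that these cliques are all ``parallel'', organised into a product decomposition $N(J) \cong C \times T$ in which each fibre $C \times \{t\}$ is one of the parallel cliques; once this rigidity is in place, any square or triangle meeting $N(J)$ in an edge is forced to respect the product and hence to lie in $N(J)$. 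Establishing this product structure --- that is, ruling out the degenerate local configurations --- is precisely where the quadrangle condition together with the exclusion of $K_4^-$ and $K_{3,2}$ as induced subgraphs is essential, and it is the step I would expect to require the most care.
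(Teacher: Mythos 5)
This theorem is not proved in the paper: it is quoted from \cite[Proposition 2.15]{Qm}, so your proposal can only be compared with that source's argument. Your first part is essentially the standard (and correct) route: project onto a gated clique $C$ dual to $J$, use the gate property to pin down the distances (adjacent vertices with distinct gates are equidistant from $C$, with cross-distances one larger), and run an induction via the quadrangle condition --- with $K_4^-$ excluded to make the resulting $4$-cycles induced --- to show that any edge whose endpoints have distinct gates is dual to $J$; since edges avoiding $J$ then have constant projection, $a$ and $b$ lie in distinct components, and indeed there are at least $|C|$ sectors. Your closure argument that a square with two adjacent edges in a sector $S$, or a triangle sharing an edge with $S$, lies entirely in $S$ is also sound: the remaining edges belong to the same hyperplanes as edges of $S$, hence avoid $J$.

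There are, however, two genuine gaps. First, Lemma \ref{lem:GatedSub} applies to connected \emph{induced} subgraphs, and you assert inducedness of a sector without proof. An edge of $X$ joining two vertices of $S$ but absent from $S$ would have to be dual to $J$, so inducedness amounts to showing that no edge of $J$ has both endpoints in a single sector. Given your constancy-of-projection observation, this reduces to the \emph{converse} of your key claim --- every edge dual to $J$ has endpoints with distinct gates --- which you never address. It is not automatic: one must show the set of edges with distinct gates is closed under the triangle and square moves generating the class $J$ (a priori such a move could relate an edge with distinct gates to one with equal gates, e.g.\ a triangle two of whose vertices share a gate), and ruling this out requires its own induction using the triangle and quadrangle conditions together with the forbidden subgraphs. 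Second, the gatedness of the carrier is not proved at all: you correctly identify that the needed input is the rigidity of $N(J)$ --- that the cliques dual to $J$ are pairwise parallel, yielding a product-like decomposition with gated fibres, the development in \cite{Qm} from which facts such as Lemma \ref{lem:DisjointCliques} are extracted --- but you leave exactly this step as a declared obstacle. As it stands, the third assertion of the theorem is unestablished, and the second rests on a load-bearing unproved step.
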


\noindent
We refer to \cite[Section 2.2]{Qm} (and more particularly to \cite[Proposition 2.30]{Qm}) for more information about the (fundamental) connection between the geometry of quasi-median graphs and their hyperplanes.

\medskip \noindent
We record the following lemmas for future use.

\begin{lemma}\label{lem:DisjointCliques}\emph{\cite[Lemma 2.25]{Qm}}
In a quasi-median graph, two distinct cliques which are dual to the same hyperplane must be disjoint.
\end{lemma}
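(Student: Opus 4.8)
The plan is to argue by contradiction: suppose two distinct cliques $C_1, C_2$ dual to the same hyperplane $J$ share a vertex $v$. Since every edge of $C_1$ and of $C_2$ lies in $J$, the essential task is to understand how two edges of $J$ sharing a vertex can interact; concretely, I aim to show that two edges $va$ and $vb$ of $J$ with $a \neq b$ must lie in a common triangle, which forces $C_1 = C_2$. As a first, purely local, reduction I would exploit the exclusion of induced $K_4^-$ twice. First, if some $a \in C_1 \setminus C_2$ and $b \in C_2 \setminus C_1$ were adjacent, then choosing $a' \in C_1$ non-adjacent to $b$ (such an $a'$ exists, for otherwise maximality of $C_1$ would give $b \in C_1$) the four vertices $\{a, a', v, b\}$ would span an induced $K_4^-$; hence vertices of $C_1 \setminus C_2$ are non-adjacent to vertices of $C_2 \setminus C_1$. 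Second, $|C_1 \cap C_2| \leq 1$: were $u, v$ two common vertices, then any $a \in C_1 \setminus C_2$ would be adjacent to both $u, v \in C_2$, producing two nearest points of $a$ on the gated clique $C_2$ and contradicting the uniqueness of the gate. Thus I may assume $C_1 \cap C_2 = \{v\}$ and fix edges $va, vb \in J$ with $a \not\sim b$.

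The geometric input is Theorem \ref{thm:BigThmQM}: $J$ delimits gated sectors, and since the sectors are precisely the connected components obtained by deleting the edges of $J$, an edge is dual to $J$ if and only if its endpoints lie in distinct sectors. Consequently the vertices of any clique dual to $J$ lie in pairwise distinct sectors, each sector meets $C_i$ in at most one vertex, and, as a gate is the unique nearest point, $b$ is the unique neighbour of $v$ inside the sector $S$ containing $b$. These observations let me compute projections onto sectors and cliques, but I expect them to remain \emph{consistent} with the forbidden configuration: the obstruction is genuinely global, because the hypothesis ``$va$ and $vb$ lie in the same hyperplane'' only asserts the existence of a chain of elementary moves---common triangles and opposite sides of squares---joining $va$ to $vb$. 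The proof must therefore trace this chain rather than analyse a bounded neighbourhood of $v$.

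The clean way to organise this is to first establish the structural statement that the carrier $N(J)$---which is gated by Theorem \ref{thm:BigThmQM}, hence itself a quasi-median graph---fibres over the set of sectors with the cliques dual to $J$ as its fibres; the lemma is then immediate, since distinct fibres are disjoint. To build this fibration I would induct on the length $n$ of a shortest chain $va = e_0, e_1, \dots, e_n = vb$, the correct inductive object being a pair of \emph{parallel} cliques dual to $J$ joined by a strip of squares. For $n = 1$ the two edges either share a triangle, giving $a \sim b$ at once, or form opposite sides of a square; in the latter case the quadrangle condition, together with the exclusion of $K_{3,2}$, lets me transport the square so as to produce a strictly shorter chain between edges of $J$ incident to a common vertex, after which the exclusion of induced $K_4^-$ on the resulting four-cycle closes the step. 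Lemma \ref{lem:GeodFlipSquare} and Lemma \ref{lem:GatedSub} are the natural tools for controlling these moves and for verifying gatedness of the fibres along the way.

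I expect the main obstacle to be exactly this square case, i.e. the construction of the fibration of $N(J)$: one must control how an elementary move across a square transports the whole clique $C_1$ while keeping track of the sector decomposition, and it is here that the quadrangle condition does the real work. Once the carrier has been shown to decompose compatibly with its sectors, the conclusion that two distinct cliques dual to $J$ are disjoint follows formally, completing the argument.
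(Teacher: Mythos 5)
The paper itself contains no proof of this lemma: it is imported verbatim from \cite[Lemma 2.25]{Qm}, so your argument can only be judged on its own terms. Your two preliminary reductions are correct and cleanly done: the induced-$K_4^-$ exclusion really does show that no vertex of $C_1 \setminus C_2$ is adjacent to a vertex of $C_2 \setminus C_1$, and gatedness of cliques really does force $|C_1 \cap C_2| \leq 1$. This correctly reduces the lemma to: two edges $va, vb$ dual to $J$ with $a \not\sim b$ cannot exist.

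The gap is that this remaining step --- the entire content of the lemma --- is never actually proved. Your induction on the length $n$ of a shortest chain of elementary moves from $va$ to $vb$ does not close as described. First, the base case is misanalysed: opposite sides of a square are disjoint, whereas $va$ and $vb$ share $v$, so for $n=1$ only the common-triangle option can occur and the ``square case'' you spend effort on is vacuous there. Second, the square case genuinely arises only in the inductive step, but there the two edges entering the square move no longer share the vertex $v$, nor need the clique of the intermediate edge meet $C_1$; so after one shortening you do not obtain ``a strictly shorter chain between edges of $J$ incident to a common vertex'', and the inductive hypothesis as you formulated it does not apply. Your strip-of-squares inductive object also ignores that chains interleave triangle moves with square moves. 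Third, the ``fibration'' of $N(J)$ you propose to establish first is (up to the garbled direction --- the fibres over sectors are the subgraphs $N(J) \cap S$, while the dual cliques are sections of the complementary projection) the product decomposition $N(J) \cong F \times C$ of the carrier; that statement is at least as strong as the lemma, and in \cite{Qm} it is established \emph{with this lemma already in hand}, so deferring the difficulty to it is close to circular and, in any case, the hard transport-of-squares argument is exactly what is left unproven.

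For what it is worth, your reductions can be completed in two lines from tools the paper already points to. Since $a \not\sim b$, the path $a$--$v$--$b$ is a geodesic (its length $2$ equals $d(a,b)$) containing two edges dual to $J$, contradicting the fact that geodesics in a quasi-median graph cross each hyperplane at most once (part of \cite[Proposition 2.30]{Qm}, the result cited in the paper just after Theorem~\ref{thm:BigThmQM}). Alternatively, the canonical isometric embedding into a Hamming graph from \cite{quasimedian} sends all edges of $J$ to edges changing one fixed coordinate, so $a$ and $b$ differ from $v$, hence from each other, only in that coordinate, giving $d(a,b)=1$. Either citation turns your outline into a proof; as written, the decisive step is missing.
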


\begin{lemma}\label{lem:SpanSquare}\emph{\cite[Fact~2.70]{Qm}}
Let $X$ be a quasi-median graph and $e_1,e_2 \subset X$ two edges sharing their initial point. If the hyperplanes dual to $e_1$ and $e_2$ are transverse, then $e_1$ and $e_2$ span a square. 
\end{lemma}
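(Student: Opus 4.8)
The plan is to produce the desired square by a single application of the \textbf{quadrangle condition}, after locating a suitable base vertex. Write $o$ for the common initial point, $e_1 = \{o,a\}$ and $e_2 = \{o,b\}$, and let $J_1, J_2$ be the hyperplanes dual to $e_1, e_2$. First I would record two preliminary facts. Since the clique $C_2$ containing $e_2$ is gated and $a \notin C_2$ (its edge to $o$ is dual to $J_1 \neq J_2$), the gate of $a$ in $C_2$ is forced to be $o$; hence $d(a,z) = 1 + d(o,z)$ for every $z \in C_2$, and in particular $d(a,b)=2$. Second, using gatedness of cliques I would describe, for any vertex $\alpha$, its gate in $C_1$ (resp. $C_2$): it is the unique vertex of the clique lying in the same sector of $J_1$ (resp. $J_2$) as $\alpha$, and the distances from $\alpha$ to the two endpoints of an edge of the clique exceed $d(\alpha,\mathrm{gate})$ by $0$ or $1$. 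Consequently $d(\alpha,o) = d(\alpha,a)+1$ is equivalent to the gate of $\alpha$ in $C_1$ being $a$, i.e. to $\alpha$ lying in the sector of $J_1$ containing $a$; symmetrically for $J_2$ and $b$.

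With these in hand, suppose I can find a base vertex $\alpha$ lying simultaneously in the sector of $J_1$ containing $a$ and in the sector of $J_2$ containing $b$. Then $d(\alpha,a) = d(\alpha,b) = d(\alpha,o)-1$ while $o$ is adjacent to both $a$ and $b$, so the quadrangle condition (applied with apex $o$ and base $\alpha$) yields a vertex $c$ adjacent to both $a$ and $b$ with $d(\alpha,c) = d(\alpha,o)-2$. Since $d(a,b)=2$ there is no edge $a\sim b$, and $d(\alpha,c) = d(\alpha,o)-2$ forbids an edge $o\sim c$; moreover $c \notin \{o,a,b\}$ by the same distance bookkeeping. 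Thus $\{o,a,c,b\}$ spans an induced $4$-cycle, that is a square having $e_1$ and $e_2$ as two of its sides, which is exactly the conclusion.

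Everything therefore reduces to the nonemptiness of $\Sigma := \sigma_1(a) \cap \sigma_2(b)$, where $\sigma_1(a)$ is the sector of $J_1$ containing $a$ and $\sigma_2(b)$ that of $J_2$ containing $b$; this is where transversality must be used and where I expect the main difficulty. Note that $o,a,b \notin \Sigma$, so $\Sigma$ genuinely requires a new vertex, which reflects the fact that the sought square does not yet exist. The clean target statement I would aim for is that transverse hyperplanes have all quadrants nonempty: every sector of $J_1$ meets every sector of $J_2$ (this is already visible in the model example $K_3 \times K_3$). By definition of transversality there is one square straddling both hyperplanes, whose four vertices occupy four distinct quadrants, hence witnessing one nonempty quadrant; the obstacle is that in a quasi-median graph a hyperplane may delimit many sectors, so this need not be the particular quadrant $\Sigma$. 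To reach an arbitrary quadrant I would propagate the straddling square across the carriers $N(J_1)$ and $N(J_2)$, which are gated by Theorem~\ref{thm:BigThmQM}. Using that the cliques dual to a given hyperplane are disjoint (Lemma~\ref{lem:DisjointCliques}) and fit together along squares into the product-like structure of the carrier, one slides the straddling square from its original position onto the cliques $C_1$ and $C_2$ through $o$, preserving at each elementary step a square dual to both $J_1$ and $J_2$.

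I would then close the loop: the transported square exhibits an edge issuing from $a$ dual to $J_2$ whose far endpoint lies in $\sigma_2(b)$, i.e. a vertex of $\Sigma$, after which the quadrangle-condition step of the second paragraph applies verbatim. The delicate point is precisely this propagation across the carriers and the verification that the relevant hyperplanes remain distinct throughout the sliding (so that no step degenerates); the remaining distance computations and the final quadrangle-condition argument are routine.
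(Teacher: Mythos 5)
Your endgame is correct as far as it goes: the gate computations in the cliques $C_1,C_2$ (forcing $d(a,b)=2$), the characterisation of $\sigma_1(a)$ by the distance drop $d(\alpha,o)=d(\alpha,a)+1$, and the quadrangle-condition step producing a common neighbour $c$ of $a$ and $b$ with $d(\alpha,c)=d(\alpha,o)-2$, together with the inducedness checks ($d(a,b)=2$ and $d(o,c)\geq 2$ by the triangle inequality), are all sound. But the proof is incomplete, and the missing piece is not a routine verification --- it is the entire content of the lemma. Everything hinges on the nonemptiness of the quadrant $\Sigma=\sigma_1(a)\cap\sigma_2(b)$, which you correctly identify as the crux but then only gesture at via a ``sliding'' of the straddling square along the carriers. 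No mechanism for this sliding is supplied: you would need at least (1) that $N(J_1)\cap N(J_2)$ is connected, so that there is a path along which to slide (this is true, since a nonempty intersection of gated subgraphs is gated, but it is nowhere stated or used); (2) an induction along such a path, with each elementary step an application of the triangle/quadrangle conditions producing a new square whose sides remain dual to $J_1$ and $J_2$; and (3) even the base case, namely moving the witnessing square across the clique containing one of its sides, which is already a nontrivial triangle-condition argument. Moreover, the ``product-like structure of the carrier'' you invoke is itself a theorem of \cite{Qm}, not something quoted in this paper. As written, your argument assumes a statement essentially equivalent to the lemma: your quadrant claim implies the lemma by your second paragraph, and conversely the lemma yields the quadrant claim (extend the witnessing square to the prism spanned by the two cliques containing its sides, then use that a clique dual to a hyperplane meets every sector exactly once). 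So the proposal is a correct reduction together with an openly acknowledged hole located exactly where the difficulty lives.

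For calibration: the paper itself does not prove this statement either; it quotes it as \cite[Fact~2.70]{Qm}, where it is established only after the parallelism machinery for cliques and carriers --- the same machinery underlying Theorem~\ref{thm:BigThmQM} and Lemma~\ref{lem:DisjointCliques} --- has been developed. If you want a self-contained proof along your lines, the cleanest formulation of your sliding step is: \emph{every} vertex of $N(J_1)\cap N(J_2)$ is the corner of a square with sides dual to $J_1$ and $J_2$, proved by induction on the distance in $N(J_1)\cap N(J_2)$ from the witnessing square, using the triangle and quadrangle conditions at each step; writing this out carefully (and checking that no step degenerates, which is where the quoted gatedness results, and hence $K_4^-$- and $K_{3,2}$-freeness, actually enter) would close the gap.
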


\begin{lemma}\label{lem:GeodHypOrder}
Let $X$ be a quasi-median graph, $x,y \in X$ two vertices and $[x,y]$ a geodesic from $x$ to $y$. Let $J_1, \ldots, J_n$ denote the hyperplanes crossed by $[x,y]$ in that order. If $J_i$ and $J_{i+1}$ are transverse for some $1 \leq i \leq n-1$, then there exists a geodesic from $x$ to $y$ crossing the hyperplanes $J_1, \ldots, J_{i-1}, J_{i+1}, J_i, J_{i+2}, \ldots, J_n$ in that order. 
\end{lemma}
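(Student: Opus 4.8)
The plan is to perform a purely local modification of the geodesic at the single vertex where $e_i$ and $e_{i+1}$ meet, using the square-spanning criterion of Lemma \ref{lem:SpanSquare}. First I would set up notation: write $[x,y]$ as a concatenation of edges $e_1 \cup \cdots \cup e_n$ with consecutive vertices $x = v_0, v_1, \ldots, v_n = y$, where $e_j$ joins $v_{j-1}$ to $v_j$ and is dual to $J_j$. Only the two edges $e_i$ (from $v_{i-1}$ to $v_i$) and $e_{i+1}$ (from $v_i$ to $v_{i+1}$) will move; the portions of the path before $v_{i-1}$ and after $v_{i+1}$ stay fixed.

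Next I would apply Lemma \ref{lem:SpanSquare}. The edges $e_i$ and $e_{i+1}$ share the vertex $v_i$, but as its terminal and initial endpoints respectively; reversing $e_i$ yields two edges issuing from the common initial point $v_i$ whose dual hyperplanes $J_i, J_{i+1}$ are transverse by hypothesis. Hence $e_i$ and $e_{i+1}$ span a square, say with fourth vertex $w$, whose sides are $e_i = [v_{i-1},v_i]$, $e_{i+1} = [v_i, v_{i+1}]$, $[v_{i-1}, w]$, and $[w, v_{i+1}]$. By the definition of a hyperplane, opposite sides of a square belong to the same hyperplane, so $[v_{i-1}, w]$ is dual to $J_{i+1}$ and $[w, v_{i+1}]$ is dual to $J_i$.

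Finally I would splice: replacing $e_i \cup e_{i+1}$ by $[v_{i-1}, w] \cup [w, v_{i+1}]$ produces a path from $x$ to $y$ of the same length $n = d(x,y)$, hence again a geodesic. Reading off the duals of its edges in order gives precisely $J_1, \ldots, J_{i-1}, J_{i+1}, J_i, J_{i+2}, \ldots, J_n$; these remain $n$ distinct hyperplanes, being exactly the hyperplanes crossed by the original geodesic with the $i$-th and $(i+1)$-th positions swapped. I do not expect a genuine obstacle: the only subtlety is the orientation bookkeeping needed to invoke Lemma \ref{lem:SpanSquare} (ensuring the two edges are taken to share an initial point), together with the elementary observation, immediate from the definition of a hyperplane, that the two new sides of the square carry the swapped hyperplanes.
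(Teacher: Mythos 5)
Your proposal is correct and is essentially the paper's own proof: both invoke Lemma \ref{lem:SpanSquare} to see that $e_i$ and $e_{i+1}$ span a square, then flip that square to obtain a path of the same length (hence a geodesic) crossing $J_1,\ldots,J_{i-1},J_{i+1},J_i,J_{i+2},\ldots,J_n$ in order. Your only addition is spelling out the orientation bookkeeping (reversing $e_i$ so the two edges share an initial point before applying the lemma), which the paper leaves implicit.
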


\begin{proof}
Decompose the geodesic $[x,y]$ as a concatenation of edges $e_1 \cup \cdots \cup e_n$. So, for every $1 \leq j \leq n$, $e_j$ is dual to the hyperplane $J_j$. As a consequence of Lemma~\ref{lem:SpanSquare}, the edges $e_i$ and $e_{i+1}$ span a square. Flipping this square (i.e., replacing $e_i$ and $e_{i+1}$ with their opposite edges in our square) produces a new path between $x$ and $y$ which has the same length as $[x,y]$ (and so is a geodesic) and which crosses the hyperplanes $J_1 ,\ldots, J_{i-1}, J_{i+1}, J_i, J_{i+2}, \ldots, J_n$ in that order.
\end{proof}

\paragraph{Graph products.} We conclude our preliminary section by considering graph products of groups and their quasi-median graphs.

\medskip \noindent
Let $\Gamma$ be a simplicial graph and $\mathcal{G}= \{ G_u \mid u \in V(\Gamma) \}$ be a collection of groups indexed by the vertex-set $V(\Gamma)$ of $\Gamma$. The \emph{graph product} $\Gamma \mathcal{G}$ is defined as the quotient
$$\left( \underset{u \in V(\Gamma)}{\ast} G_u \right) / \langle \langle [g,h]=1, g \in G_u, h \in G_v \ \text{if} \ \{u,v\} \in E(\Gamma) \rangle \rangle$$
where $E(\Gamma)$ denotes the edge-set of $\Gamma$. The groups in $\mathcal{G}$ are referred to as \emph{vertex-groups}. 

\medskip \noindent
\textbf{Convention.} In all the article, we will assume for convenience that the groups in $\mathcal{G}$ are non-trivial. Notice that it is not a restrictive assumption, since a graph product with some trivial factors can be described as a graph product over a smaller graph all of whose factors are non-trivial.

\medskip \noindent
A \emph{word} in $\Gamma \mathcal{G}$ is a product $g_1 \cdots g_n$ where $n \geq 0$ and where, for every $1 \leq i \leq n$, $g_i \in G$ for some $G \in \mathcal{G}$; the $g_i$ are the \emph{syllables} of the word, and $n$ is the \emph{length} of the word. Clearly, the following operations on a word does not modify the element of $\Gamma \mathcal{G}$ it represents:
\begin{description}
	\item[Cancellation:] delete the syllable $g_i=1$;
	\item[Amalgamation:] if $g_i,g_{i+1} \in G$ for some $G \in \mathcal{G}$, replace the two syllables $g_i$ and $g_{i+1}$ by the single syllable $g_ig_{i+1} \in G$;
	\item[Shuffling:] if $g_i$ and $g_{i+1}$ belong to two adjacent vertex-groups, switch them.
\end{description}
A word is \emph{graphically reduced} if its length cannot be shortened by applying these elementary moves. Every element of $\Gamma \mathcal{G}$ can be represented by a graphically reduced word, and this word is unique up to the shuffling operation. This allows us to define the \emph{length} of an element $g \in \Gamma \mathcal{G}$, denoted by $|g|$, as the length of any graphically reduced word representing $g$. For more information on graphically reduced words, we refer to \cite{GreenGP} (see also \cite{HsuWise,VanKampenGP}). 

\medskip \noindent
We record the following definition for future use:

\begin{definition}\label{def:headtail}
Given an element $g \in \Gamma \mathcal{G}$, the \emph{tail} of $g$ is the set of syllables of a graphically reduced word representing $g$ which appear as the last syllable in some new graphically reduced word representing $g$ obtained by shufflings.
\end{definition}

\noindent
The connection between graph products and quasi-median graphs is made explicit by the following statement \cite[Proposition 8.2]{Qm}:

\begin{thm}\label{thm:GPquasimedian}
Let $\Gamma$ be a simplicial graph and $\mathcal{G}$ a collection of groups indexed by $V(\Gamma)$. The Cayley graph $\QM := \mathrm{Cay} \left( \Gamma \mathcal{G}, \bigcup\limits_{G \in \mathcal{G}} G \backslash \{1 \} \right)$ is a quasi-median graph. 
\end{thm}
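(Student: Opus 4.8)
The plan is to verify the three defining conditions of a quasi-median graph directly, after translating the combinatorial structure of $\QM$ into the normal-form theory of graph products. Connectivity is immediate, since $\bigcup_{G \in \mathcal{G}} G \setminus \{1\}$ generates $\Gamma\mathcal{G}$. The first and most important step is to identify the graph metric: I would show that $d(g,h) = |g^{-1}h|$, the syllable length of a graphically reduced word, for all vertices $g,h$. As left-multiplication acts by isometries it suffices to treat $d(1,g)=|g|$; the inequality $d(1,g)\le |g|$ is clear by reading off a reduced word, and the reverse inequality follows because each edge of $\QM$ changes exactly one syllable, so a path of length $\ell$ exhibits $g$ as a product of $\ell$ syllables, whence $|g|\le\ell$. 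With this in hand, cliques are exactly the cosets $gG$ of vertex-groups, triangles are the triples contained in a single such coset, and squares are the $4$-cycles $g, ga, gab, gb$ with $a\in G_u$, $b\in G_v$ and $\{u,v\}\in E(\Gamma)$ (so that $ab=ba$); all of this rests on the observation that a single nontrivial syllable determines its vertex-group uniquely.

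For the forbidden induced subgraphs, I would argue as follows. If an induced $K_4^-$ occurred, its two triangles would share an edge; since that edge fixes a unique vertex-group $G$, both triangles would lie in the same coset of $G$, forcing the two ``missing'' vertices into a common clique and hence adjacent, contradicting that $K_4^-$ is induced. For $K_{3,2}$, translate one vertex of the $2$-side to $1$, so that the three vertices $s_1,s_2,s_3$ of the $3$-side become nontrivial syllables lying in pairwise distinct vertex-groups (distinctness of the groups is forced by the absence of edges inside the $3$-side). Writing $t$ for the other vertex of the $2$-side, each $t^{-1}s_j$ is a single syllable, so $t^{-1}$ is a product of two syllables and has length $2$; but then $s_1^{-1},s_2^{-1},s_3^{-1}$ would be three distinct tail syllables of $t^{-1}$ (in the sense of Definition \ref{def:headtail}) lying in three distinct vertex-groups, whereas a length-$2$ element has at most two tail syllables. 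This contradiction rules out $K_{3,2}$.

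The triangle and quadrangle conditions I would both reduce, after translating $a$ to $1$, to the behaviour of tails under right-multiplication by a syllable. The key elementary fact is that, for $s\in G\setminus\{1\}$, one has $|gs|=|g|$ precisely when $g$ admits a tail syllable in $G$, in which case removing that syllable yields an element of length $|g|-1$. For the triangle condition, $x\sim y$ and $|x|=|y|$ give $y=xs$ with $s\in G$ and $|xs|=|x|$, so $x=x't$ has a tail syllable $t\in G$; then $z:=x'$ has length $|x|-1$ and lies in the clique $xG=yG$, hence is adjacent to both $x$ and $y$. For the quadrangle condition, $z\sim x$ and $z\sim y$ with $|x|=|y|=|z|-1$ force $z$ to have a tail syllable $p$ in the vertex-group $G_1$ of the edge $zx$ and a tail syllable $q$ in the vertex-group $G_2$ of the edge $zy$, with $x$ and $y$ obtained by deleting $p$, resp.\ $q$.

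The main obstacle, and the only place where real content hides, is this last step, which requires the structural lemma that two tail syllables of $z$ lying in distinct vertex-groups $G_1\ne G_2$ force the corresponding vertices of $\Gamma$ to be adjacent (so $G_1$ and $G_2$ commute) and can be deleted simultaneously, producing a reduced expression $z=z'pq$ with $pq=qp$. Once this is established, deleting $p$ gives $x=z'q$, deleting $q$ gives $y=z'p$, and $w:=z'$ has length $|z|-2$ and is adjacent to both $x$ and $y$, as required. This lemma, together with the characterisation of when right-multiplication shortens length, is exactly the part of the argument resting on the combinatorics of graphically reduced words in graph products; everything else is bookkeeping.
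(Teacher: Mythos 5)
Your proposal is correct and follows essentially the same route as the actual proof of this statement: note that the paper itself does not prove it but imports it as Proposition 8.2 of \cite{Qm}, where the argument likewise identifies the graph metric with syllable length ($d(g,h)=|g^{-1}h|$), identifies cliques with cosets of vertex-groups, and verifies the triangle and quadrangle conditions together with the exclusion of $K_4^-$ and $K_{3,2}$ through the combinatorics of graphically reduced words. The one ingredient you defer --- that two distinct tail syllables of $z$ necessarily lie in distinct, adjacent vertex-groups and can be deleted simultaneously --- is a standard consequence of the uniqueness of reduced words up to shuffling (two syllables whose relative order is reversed by a sequence of shufflings must at some step be swapped directly, hence lie in adjacent vertex-groups; and a single vertex-group contributes at most one tail syllable, since shuffling never permutes two syllables of the same group), so this is a legitimate reduction to quoted normal-form theory rather than a gap, and it also silently disposes of the degenerate case $x=y$ in the quadrangle condition.
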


\noindent
Notice that the graph product $\Gamma \mathcal{G}$ naturally acts by isometries on $\QM$ by left-multiplication. We refer to \cite[Section 8.1]{Qm} for more information about the geometry of $\QM$. Here, we only mention the following two statements, which describe the cliques and the prisms of $\QM$:

\begin{lemma}\label{lem:GPcliques}\emph{\cite[Lemma 8.6]{Qm}}
Let $\Gamma$ be a simplicial graph and $\mathcal{G}$ a collection of groups indexed by $V(\Gamma)$. The cliques of $\QM$ are the subgraphs generated by the cosets of the form $g G_u$, where $g \in \Gamma \mathcal{G}$ and $u \in V(\Gamma)$.
\end{lemma}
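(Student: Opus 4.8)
The plan is to exploit the left-multiplication action of $\Gamma \mathcal{G}$ on $\QM$ by graph automorphisms. This action permutes the cosets $gG_u$ among themselves and is transitive on the vertex set, so it suffices to prove two things: that every coset of the form $G_u$ (with $u \in V(\Gamma)$) is a clique, and that every clique containing the identity vertex $1$ coincides with some $G_u$. The general statement then follows by translating by an arbitrary $g \in \Gamma \mathcal{G}$.

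First I would record the description of the edges of $\QM$ coming from its definition as a Cayley graph: two vertices $a,b$ are adjacent precisely when $a^{-1}b$ lies in $G_w \backslash \{1\}$ for some $w \in V(\Gamma)$, that is, when $a^{-1}b$ is represented by a graphically reduced word of length one. From this it is immediate that any two distinct elements of $G_u$ are adjacent, since their quotient is a non-trivial element of $G_u$; hence each $G_u$ spans a complete subgraph.

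The crux of the argument, and the place where the normal-form theory of graph products enters, is the following length computation. If $b \in G_u \backslash \{1\}$ and $c \in G_v \backslash \{1\}$ with $u \neq v$, then the word $b^{-1}c$ is already graphically reduced: one can neither amalgamate its two syllables (they lie in distinct vertex-groups) nor cancel them, so $|b^{-1}c| = 2$ and therefore $b$ and $c$ are not adjacent. Using this, let $K$ be a clique containing $1$. Every other vertex of $K$ is adjacent to $1$, hence is a non-trivial element of some vertex-group; and any two such vertices must lie in the \emph{same} vertex-group, for otherwise the computation above would contradict their adjacency in the complete graph $K$. Thus $K \subseteq G_u$ for a single $u$, and maximality of $K$ together with the completeness of $G_u$ forces $K = G_u$. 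I expect this length computation to be the main obstacle, although it is short: it is the one genuinely combinatorial input, everything else being bookkeeping around the group action.

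Finally I would check that each coset $G_u$ is genuinely maximal, and not merely a complete subgraph, in order to conclude it is a clique. If some vertex $w \notin G_u$ were adjacent to every element of $G_u$, then, being adjacent to $1$, we would have $w \in G_v \backslash \{1\}$ for some $v$, necessarily with $v \neq u$; choosing any $b \in G_u \backslash \{1\}$ (which exists because vertex-groups are assumed non-trivial), the length computation gives $|b^{-1}w| = 2$, contradicting adjacency of $b$ and $w$. Translating back by left multiplication then yields the claim for every coset $gG_u$ and completes the proof.
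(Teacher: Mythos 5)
Your proposal is correct: the reduction to cliques through the identity via the vertex-transitive left action, the observation that each $G_u$ spans a complete subgraph, and the normal-form computation showing $|b^{-1}c|=2$ for non-trivial $b \in G_u$, $c \in G_v$ with $u \neq v$ (which simultaneously yields maximality of $G_u$ and that a clique at $1$ lies in a single vertex-group) together give a complete proof. The paper itself states this lemma without proof, citing \cite{Qm}, and your argument is precisely the standard length/normal-form argument used there, with the only point deserving care --- that shuffling preserves length, so a two-syllable graphically reduced word cannot represent a generator --- handled correctly by the uniqueness of graphically reduced words up to shuffling.
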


\begin{lemma}\label{lem:GPprisms}\emph{\cite[Corollary 8.7]{Qm}}
Let $\Gamma$ be a simplicial graph and $\mathcal{G}$ a collection of groups indexed by $V(\Gamma)$. The prisms of $\QM$ are the subgraphs generated by the cosets of the form $g \langle \Lambda \rangle$, where $g \in \Gamma \mathcal{G}$, where $\Lambda$ is a complete subgraph of $\Gamma$, and where $\langle \Lambda \rangle$ denotes the subgroup of $\Gamma \mathcal{G}$ generated by the vertex-groups labelling the vertices of~$\Lambda$.
\end{lemma}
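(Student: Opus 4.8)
The plan is to prove the two inclusions separately, reducing in each case to a prism (resp.\ coset) containing the identity vertex by exploiting the vertex-transitive left action of $\Gamma \mathcal{G}$ on $\QM$. The key dictionary is Lemma~\ref{lem:GPcliques}: the cliques of $\QM$ passing through $1$ are exactly the subgroups $G_u$, $u \in V(\Gamma)$, since a clique $gG_u$ contains $1$ precisely when $g \in G_u$, in which case $gG_u = G_u$.

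For the forward inclusion, let $\Lambda$ be a complete subgraph of $\Gamma$; after translating I may assume $g=1$. Because $\Lambda$ is complete, the vertex-groups $\{G_u \mid u \in \Lambda\}$ pairwise commute, so $\langle \Lambda \rangle$ is the internal direct product $\prod_{u \in \Lambda} G_u$ and every element admits a unique expression $\prod_u h_u$ with $h_u \in G_u$. I would then check that the subgraph induced on $\langle \Lambda \rangle$ is the Cartesian product of the complete graphs $\mathrm{Cay}(G_u, G_u \setminus \{1\})$: two elements of $\langle \Lambda \rangle$ are adjacent in $\QM$ iff they differ by right-multiplication by a single syllable $s \in G_v \setminus \{1\}$, which forces $v \in \Lambda$ and shows that they differ in exactly one coordinate. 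Since each factor $G_u$ is a clique by Lemma~\ref{lem:GPcliques}, this induced subgraph is a product of cliques, i.e.\ a prism.

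For the reverse inclusion, I would start with a prism $P = C_1 \times \cdots \times C_k$ and translate so that $1 \in P$. The $k$ factor-cliques through $1$ are cliques of $\QM$ containing $1$, hence equal to $G_{u_1}, \ldots, G_{u_k}$ for some vertices $u_i$; set $\Lambda = \{u_1, \ldots, u_k\}$. The heart of the argument is to show that $\Lambda$ is complete and that $P$ is the subgraph on $\langle \Lambda \rangle$. Distinct factors of a product span squares through the base vertex, so for $i \neq j$ the cliques $G_{u_i}$ and $G_{u_j}$ span a square at $1$; this forces $u_i \neq u_j$ (two edges of a common clique lie in a triangle and are dual to the same, hence non-transverse, hyperplane) and $u_i \sim u_j$ (if $u_i \not\sim u_j$ then $\langle G_{u_i}, G_{u_j} \rangle$ is a free product whose Cayley graph contains no square). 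Thus $\Lambda$ is complete. Finally, reading off a vertex of $P$ by changing one product-coordinate at a time corresponds, in $\QM$, to successively right-multiplying by elements of the $G_{u_i}$; since these commute, every vertex of $P$ lies in $\langle \Lambda \rangle$, and a cardinality count ($|P| = \prod_i |G_{u_i}| = |\langle \Lambda \rangle|$) together with uniqueness of the normal form in the direct product gives $P = \langle \Lambda \rangle$. Untranslating yields $P = g \langle \Lambda \rangle$.

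The main obstacle I anticipate is the reverse direction's final step: translating the purely graph-theoretic product decomposition of $P$ into the group-theoretic direct-product structure of $\langle \Lambda \rangle$. One must verify that moving along a clique parallel to the $i$-th factor always amounts to right-multiplication by an element of $G_{u_i}$ (so that the factor cliques through $1$, rather than only the abstract factors $C_i$, control the whole prism), and that the resulting map from tuples to group elements is a bijection onto $\langle \Lambda \rangle$; the commutation coming from completeness of $\Lambda$ is exactly what makes this map well defined and path-independent.
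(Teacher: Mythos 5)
The paper does not actually prove this lemma: it is imported verbatim from \cite[Corollary 8.7]{Qm}, so there is no internal proof to compare against, and your proposal has to be judged on its own merits. On those merits, your outline is the natural and essentially correct normal-form argument: the forward inclusion via the observation that a nontrivial syllable of $G_v$ lying in $\langle \Lambda \rangle$ forces $v \in \Lambda$, and the reverse inclusion via identifying the factor-cliques through $1$ with vertex-groups $G_{u_1}, \ldots, G_{u_k}$ using Lemma \ref{lem:GPcliques} and propagating along the product structure. Two local steps deserve repair, though both are fixable within your framework. First, the cardinality count $|P| = \prod_i |G_{u_i}| = |\langle \Lambda \rangle|$ proves nothing when the vertex-groups are infinite, since a proper subset of an infinite set can have the same cardinality; but you do not need it, because the tuple-to-element map you describe in your last paragraph (sending coordinates $(h_1, \ldots, h_k)$ to $h_1 \cdots h_k$, well defined by the commutation coming from completeness of $\Lambda$) is surjective onto $\langle \Lambda \rangle = \prod_i G_{u_i}$, which directly gives $\langle \Lambda \rangle \subseteq P$; drop the count and keep the bijection. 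Second, your justification that $u_i \sim u_j$ is stated as ``the Cayley graph of the free product $G_{u_i} \ast G_{u_j}$ contains no square,'' but the square in question lives in $\QM$, and a priori its fourth vertex need not lie in $\langle G_{u_i}, G_{u_j} \rangle$; either invoke gatedness of such parabolic subgroups (available in \cite{Qm}) or, more elementarily, write the fourth vertex as $s_i t = s_j t'$ with $s_i \in G_{u_i}$, $s_j \in G_{u_j}$ nontrivial and $t, t'$ syllables, and use uniqueness of graphically reduced words up to shuffling to force $t = s_j$, $t' = s_i$, and the commutation relation, which is exactly $u_i \sim u_j$. With these two adjustments your argument is complete and is, as far as one can tell, the same kind of direct computation that underlies the cited result in \cite{Qm}.
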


\section{Special actions on quasi-median graphs}

\subsection{Warm up: special cube complexes revisited}\label{section:warmup}

\noindent
As introduced in \cite{MR2377497}, special cube complexes are nonpositively curved cube complexes which do not contain configurations of hyperplanes referred to as \emph{pathological}. Then, the key observation is that, given such a cube complex $X$, there exists a graph $\Gamma$ (namely, the crossing graph of the hyperplanes in $X$) and a local isometry $X \hookrightarrow X_\Gamma$, where $X_\Gamma$ is a nonpositively curved cube complex with the right-angled Artin group $A_\Gamma$ as fundamental group (namely, a \emph{Salvetti complex}). As local isometries between nonpositively curved cube complexes are $\pi_1$-injective, it follows that $\pi_1(X)$ is a subgroup of $A_\Gamma$. Similar arguments can be conducted when $A_\Gamma$ is replaced with the right-angled Coxeter group $C_\Gamma$, but considering either $A_\Gamma$ or $C_\Gamma$ is essentially equivalent because a right-angled Artin groups always embeds as a finite-index subgroup into a right-angled Coxeter group \cite{MR1783167}.

\medskip \noindent
In this section, we sketch an alternative approach which illustrates the more general arguments from the next section. So we fix a group $G$ which acts \emph{specially} on a CAT(0) cube complex $X$, i.e.,
\begin{itemize}
	\item for every hyperplane $J$ and every element $g \in G$, $J$ and $gJ$ are neither transverse nor tangent;
	\item for all hyperplanes $J_1,J_2$ and every element $g \in G$, if $J_1$ and $J_2$ are transverse then $J_1$ and $gJ_2$ cannot be tangent.
\end{itemize}
Let $\Gamma$ denote the graph whose vertices are the $G$-orbits of hyperplanes and whose edges link two orbits if they contain at least two transverse hyperplanes. Naturally, each hyperplane of $X$ is labelled by a vertex of $\Gamma$, namely the $G$-orbits it belongs to. Define the \emph{label} of an oriented path $\gamma$ in $X$ as the word $\ell(\gamma)$ of the $G$-orbits of hyperplanes it crosses. Fixing a basepoint $x_0 \in X$, we consider
$$\Phi : \left\{ \begin{array}{ccc} X & \to & X(\Gamma) \\ x & \mapsto & \ell(\text{path from $x_0$ to $x$}) \end{array} \right.$$
where $X(\Gamma)$ denotes the usual CAT(0) cube complex on which the right-angled Coxeter group $C_\Gamma$ acts, namely the cube-completion of the Cayley graph $\mathrm{Cay}(C_\Gamma, V(\Gamma))$. Notice that $\Phi$ naturally induces
$$\varphi : \left\{ \begin{array}{ccc} G & \to & C_\Gamma \\ g & \mapsto & \Phi(g \cdot x_0) \end{array} \right..$$
It turns out that $\varphi$ is an injective morphism, that $\Phi$ is a $\varphi$-equivariant embedding, and that the image of $\Phi$ is a convex subcomplex of $C_\Gamma$. These observations are based on the following three claims, for which we sketch justifications.

\begin{claim}\label{claim:AnyPath}
The map $\Phi$ is well-defined, i.e., for every vertex $x \in X$, the label of a path from $x_0$ to $x$ does not depend on the path we choose.
\end{claim}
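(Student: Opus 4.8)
The plan is to reduce the statement to a move-by-move check and then verify invariance of the label under each move. Since $X$ is a CAT(0) cube complex, its one-skeleton is a median graph, hence a quasi-median graph that is moreover bipartite and therefore contains no triangles; so Lemma~\ref{lem:GeodFlipSquare} tells me that any two paths from $x_0$ to $x$ differ by a finite sequence of square flips, backtrack removals, and their inverses (the triangle-shortening move never occurs). It therefore suffices to show that none of these moves changes the element of $C_\Gamma$ represented by the label $\ell$, since then $\ell(\gamma)$ depends only on the endpoints of $\gamma$ and $\Phi$ is well defined.

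The backtrack case I expect to be immediate, relying only on the fact that the standard generators of a right-angled Coxeter group are involutions: if the path backtracks along an edge $e$ followed by its inverse, both edges are dual to the same hyperplane, so the label contains a subword $ss$ with $s \in V(\Gamma)$ the associated generator, and $s^2 = 1$ in $C_\Gamma$ shows that deleting this subword leaves the represented element unchanged.

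The square flip will be the main obstacle, and is where the construction of $\Gamma$ enters. When a segment $e_i \cup e_{i+1}$ of the path is replaced by the two opposite sides $a \cup b$ of a square, I would use that opposite sides of a square are dual to the same hyperplane: the flipped segment crosses exactly the same two hyperplanes $J, J'$ as the original, but in the reverse order. Since $J$ and $J'$ are transverse (they cross inside the square), their $G$-orbits are either equal or joined by an edge of $\Gamma$, by the very definition of $\Gamma$. In the first case the two generators coincide and the two labels are literally the same word; in the second case the two generators commute in $C_\Gamma$, so transposing them does not change the represented element. Either way the label is preserved.

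Putting these together gives the claim. The delicate point to be careful about is the square flip: one must track which hyperplane each edge is dual to and invoke the defining property of $\Gamma$ to guarantee that a pair of transverse hyperplanes yields a pair of commuting (or equal) generators, while the involution relations dispose of the backtrack move automatically. I note that this argument needs only the construction of $\Gamma$ and not the specialness hypotheses, which will instead be required for the injectivity of $\varphi$ and the embedding properties established in the subsequent claims.
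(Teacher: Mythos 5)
Your proof is correct and takes essentially the same route as the paper's: reduce to elementary path moves via Lemma~\ref{lem:GeodFlipSquare} (with the triangle move vacuous since a median graph is bipartite), dispose of backtracks using the involution relations $s^2=1$ in $C_\Gamma$, and handle square flips by observing that opposite sides of a square are dual to the same hyperplane, so the flipped segment crosses the same two transverse hyperplanes in reverse order and the corresponding generators coincide or commute by the construction of $\Gamma$. Your closing remark that specialness is not needed here also matches the paper, which states exactly this right after the claim.
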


\noindent
First, consider an oriented path of the form $ee^{-1}$, namely a backtrack. Then $\ell(ee^{-1})= \ell(e)^2$ equals $1$ in $C_\Gamma$. Next, consider an oriented path of the form $e \cup f$ where $e$ and $f$ are consecutive edges in a square. Because the hyperplanes dual to $e$ and $f$ are transverse, the generators $\ell(e)$ and $\ell(f)$ commute in $C_\Gamma$, so 
$$\ell(e \cup f ) = \ell(e) \ell(f) = \ell(f) \ell(e)= \ell(e' \cup f')$$
where $e' \cup f'$ denotes the image of $e \cup f$ under the diagonal reflection in the square which contains $e \cup f$. Therefore, the label of a path remains the same if we add or remove a backtrack or if we \emph{flip a square}. In a CAT(0) cube complex, any two paths with the same endpoints can be obtained from one to another thanks to such elementary operations, so the desired conclusion follows. \qed

\begin{claim}
The map $\varphi : G \to C_\Gamma$ is a morphism.
\end{claim}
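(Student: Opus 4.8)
The plan is to reduce the statement to two facts: the path-independence already established in Claim~\ref{claim:AnyPath}, and the $G$-equivariance of the labelling. Recall that, by definition, $\varphi(g)=\Phi(g\cdot x_0)$ is the element of $C_\Gamma$ represented by the label $\ell(\alpha)$ of any path $\alpha$ from $x_0$ to $g\cdot x_0$, the choice of $\alpha$ being irrelevant by Claim~\ref{claim:AnyPath}. So fix a path $\alpha$ from $x_0$ to $g\cdot x_0$ and a path $\beta$ from $x_0$ to $h\cdot x_0$. Then $g\cdot\beta$ is a path from $g\cdot x_0$ to $gh\cdot x_0$, and the concatenation $\alpha\cup g\beta$ is a path from $x_0$ to $gh\cdot x_0$. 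Using path-independence once more, I would write
$$\varphi(gh)=\Phi(gh\cdot x_0)=\ell(\alpha\cup g\beta)=\ell(\alpha)\cdot\ell(g\beta),$$
the last equality holding because the label of a concatenation of paths is, by construction, the product in $C_\Gamma$ of the individual labels (the hyperplanes crossed by $\alpha\cup g\beta$ are those crossed by $\alpha$ followed by those crossed by $g\beta$, in that order).

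The key step is then to identify $\ell(g\beta)$ with $\ell(\beta)$. Since $g$ acts on $X$ by an automorphism, the hyperplanes crossed by $g\beta$ are exactly the $g$-translates of the hyperplanes crossed by $\beta$, and in the same order. But each hyperplane is labelled by its $G$-orbit, so $J$ and $gJ$ carry the same label in $V(\Gamma)$ for every hyperplane $J$. Consequently $\ell(g\beta)=\ell(\beta)$, and combining with the display above gives
$$\varphi(gh)=\ell(\alpha)\cdot\ell(\beta)=\varphi(g)\varphi(h),$$
as desired.

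I do not expect any serious obstacle here: the whole argument rests on the fact that the labelling was designed to be $G$-invariant, so that translating a path by an element of $G$ leaves its label unchanged. The only point requiring a little care is the bookkeeping that makes $\ell$ genuinely additive under concatenation of oriented paths (so that the identity $\ell(\alpha\cup g\beta)=\ell(\alpha)\ell(g\beta)$ is legitimate), together with the systematic use of Claim~\ref{claim:AnyPath} to guarantee that the three occurrences of $\varphi$ may be computed along whichever convenient paths we please. Neither is delicate, so the claim is essentially a formal consequence of the constructions already in place.
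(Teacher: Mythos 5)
Your argument is exactly the paper's: decompose a path from $x_0$ to $gh\cdot x_0$ as $[x_0,g x_0]\cup g[x_0,h x_0]$, invoke Claim~\ref{claim:AnyPath} for path-independence, and use the $G$-invariance of the labelling (here immediate, since each edge is labelled by the $G$-orbit of its dual hyperplane) to get $\ell(g\beta)=\ell(\beta)$. The proposal is correct and matches the paper's proof, with the $G$-invariance step spelled out slightly more explicitly.
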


\noindent
Fix two elements $g,h \in G$. We have
$$\varphi(gh) = \ell([x_0,ghx_0])= \ell ( [x_0,gx_0] \cup g [x_0,hx_0]) = \ell([x_0,gx_0]) \ell( [x_0,hx_0]) = \varphi(g) \varphi(h),$$
where the second equality is justified by Claim \ref{claim:AnyPath}, and the third one by the fact that the labelling map $\ell$ is $G$-invariant. \qed

\medskip \noindent
So far, the specialness of the action has not been used, the morphism $\varphi : G \to C_\Gamma$ is well-defined for every action of $G$ on a CAT(0) cube complex. However, this assumption is crucial in the proof of the injectivity of $\Phi$ (and $\varphi$), which follows from the next assertion:

\begin{claim}
For every vertex $x \in X$ and every geodesic $[x_0,x]$ in $X$, the word $\ell([x_0,x])$ is graphically reduced in $C_\Gamma$.
\end{claim}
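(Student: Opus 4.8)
The plan is to argue by contradiction, exploiting the standard combinatorics of right-angled Coxeter groups: a word over the generators of $C_\Gamma$ fails to be graphically reduced precisely when two occurrences of the same generator can be brought together by shufflings, i.e.\ when there are indices $i<j$ with $\ell(J_i)=\ell(J_j)$ and with $\ell(J_k)$ adjacent to $\ell(J_i)$ in $\Gamma$ for every $i<k<j$. So I would suppose that $\ell([x_0,x])$ is not reduced, decompose $[x_0,x]=e_1\cup\cdots\cup e_n$ with $e_k$ dual to the hyperplane $J_k$ (these are pairwise distinct, since a geodesic in a CAT(0) cube complex crosses each hyperplane at most once), and fix such a pair $(i,j)$ with $j-i$ minimal.

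The heart of the argument is to upgrade the algebraic commutation of the intermediate labels into genuine transversality of the corresponding hyperplanes, and this is exactly where specialness enters. Consider the two consecutive edges $e_i$ and $e_{i+1}$: they share their common vertex, so $N(J_i)\cap N(J_{i+1})\neq\emptyset$, and hence $J_i$ and $J_{i+1}$ are either transverse or tangent. Since $\ell(J_{i+1})$ is adjacent to $\ell(J_i)$ in $\Gamma$, some $G$-translate of $J_i$ is transverse to some $G$-translate of $J_{i+1}$; the second specialness condition then forbids any hyperplane in the orbit of $J_i$ from being tangent to any hyperplane in the orbit of $J_{i+1}$. Therefore $J_i$ and $J_{i+1}$ are transverse, and by Lemma~\ref{lem:SpanSquare} the edges $e_i,e_{i+1}$ span a square; flipping it, as in Lemma~\ref{lem:GeodHypOrder}, produces a new geodesic crossing $J_1,\dots,J_{i-1},J_{i+1},J_i,J_{i+2},\dots,J_n$ in this order.

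I would then iterate. After the flip, the edge dual to $J_i$ is immediately followed by the edge dual to $J_{i+2}$, whose label still commutes with $\ell(J_i)$; the same reasoning shows $J_i$ and $J_{i+2}$ are transverse and allows another flip. Repeating this pushes the crossing of $J_i$ one step to the right at a time, until it becomes consecutive with the crossing of $J_j$. At that stage $J_i$ and $J_j$ are dual to two consecutive edges of a geodesic, so their carriers meet, i.e.\ $N(J_i)\cap N(J_j)\neq\emptyset$. But $J_i$ and $J_j$ lie in the same $G$-orbit, and the first specialness condition asserts that $J_i$ and $gJ_i$ are neither transverse nor tangent for every $g\in G$; this contradicts the carriers meeting, and the contradiction shows that $\ell([x_0,x])$ is graphically reduced.

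The main obstacle, and the only place requiring more than bookkeeping, is the bridging step: deducing that the \emph{specific} hyperplanes $J_i$ and $J_k$ crossed by the geodesic are transverse, rather than merely that their orbits are adjacent in $\Gamma$. Commutation in $C_\Gamma$ only guarantees that \emph{some} pair of translates is transverse, and without specialness $J_i$ and $J_{i+1}$ could instead be tangent (as already happens for $\mathbb{Z}$ acting on a line by unit translations); it is precisely the two specialness hypotheses---one ruling out tangency between orbit-adjacent hyperplanes, the other ruling out transversality or tangency between same-orbit hyperplanes---that legitimise the successive square flips and then deliver the final contradiction. I would take some care to confirm that each flip keeps $J_i$ strictly to the left of $J_j$ and preserves the commutation of the remaining intermediate labels with $\ell(J_i)$, so that the iteration genuinely terminates with the two crossings adjacent.
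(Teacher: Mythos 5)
Your proof is correct and takes essentially the same route as the paper's: by contradiction, upgrade the algebraic commutation of consecutive labels to genuine transversality of $J_i$ and $J_{i+1}$ via the second specialness condition, flip squares (as in Lemmas \ref{lem:SpanSquare} and \ref{lem:GeodHypOrder}) until the two equal-label crossings become consecutive, and then contradict the first specialness condition since the two hyperplanes lie in the same $G$-orbit but are tangent or transverse. The bookkeeping you flag (minimality of $j-i$, preservation of the intermediate commutations under flips) is exactly what the paper's sketch leaves implicit, and it goes through as you describe.
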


\noindent
Assume that there exists a vertex $x \in X$ and a geodesic $[x_0,x]$ such that the word $\ell([x_0,x])$ is not graphically reduced. So, if we write $[x_0,x]$ as a concatenation of oriented edges $e_1 \cup \cdots \cup e_n$, then there exist two indices $1 \leq i < j \leq n$ such that $\ell(e_i)= \ell(e_j)$ and such that $\ell(e_k)$ commutes with $\ell(e_i)$ for every $i < k < j$. Assume that $j -i \geq 2$. Because $\ell(e_i)$ and $\ell(e_{i+1})$ commute, the hyperplane $J_i$ dual to $e_i$ has a $G$-translate which is transverse to the hyperplane $J_{i+1}$ dual to $e_{i+1}$. Because the action is special, the hyperplanes $J_i$ and $J_{i+1}$ cannot be tangent, so they are transverse. As a consequence, the edges $e_i$ and $e_{i+1}$ span a square, and by flipping this square, we can replace our geodesic $[x_0,x]$ with a new geodesic so that $j-i$ decreases. By iterating the process, we end up with a geodesic $[x_0,x]$ such that $j=i+1$. In other words, $[x_0,x]$ contains two successive edges with the same label; or equivalently, if $J$ and $H$ denote the hyperplanes dual to these two edges, $J$ and $H$ belong to the same $G$-orbit. But $J$ and $H$ are either tangent or transverse, which contradicts the specialness of the action. \qed

\subsection{Embeddings into graph products}\label{section:embedding}

\noindent
In this section, we define special actions on quasi-median graphs and we show, given a group admitting such an action, how to embed it into a graph product. We begin by introducing the following notation:

\begin{notation}
Let $G$ be a group acting on a quasi-median graph $X$. For every hyperplane $J$ of $X$, $\mathscr{S}(J)$ denote the collection of sectors delimited by $J$, and $\mathfrak{S}(J)$ the image of $\mathrm{stab}_G(J)$ in the permutation group of $\mathscr{S}(J)$.
\end{notation}

\noindent
Special actions on quasi-median graphs are defined as follows:

\begin{definition}
Let $G$ be a group acting faithfully on a quasi-median graph $X$. The action is \emph{hyperplane-special} if
\begin{itemize}
	\item for every hyperplane $J$ and every element $g \in G$, $J$ and $gJ$ are neither transverse nor tangent;
	\item for all hyperplanes $J_1,J_2$ and every element $g \in G$, if $J_1$ and $J_2$ are transverse then $J_1$ and $gJ_2$ cannot be tangent.
\end{itemize}
The action is \emph{special} if, in addition, the action $\mathfrak{S}(J) \curvearrowright \mathscr{S}(J)$ is~free for every hyperplane~$J$ of $X$. 
\end{definition}

\noindent
It is worth noticing that our definition agrees with the definition of special actions on median graphs we used in the previous section. In other words, an action on a median graph is special if and only if it is hyperplane special. Indeed, hyperplanes in median graphs delimit exactly two sectors, and a faithful action on a set of cardinality two is automatically free. 

\medskip \noindent
The rest of the section is almost entirely dedicated to the proof of the following embedding theorem:

\begin{thm}\label{thm:BigEmbedding}
Let $G$ be a group acting specially on a quasi-median graph $X$. 
\begin{itemize}
	\item Fix representatives $\{J_i \mid i \in I\}$ of hyperplanes of $X$ modulo the action of $G$. 
	\item Let $\Gamma$ denote the graph whose vertex-set is $\{J_i \mid i\in I\}$ and whose edges link two hyperplanes if they have two transverse $G$-translates. 
	\item For every $i \in I$, let $G_i$ denote the group $\mathfrak{S}(J_i) \oplus K_i$, where $K_i$ is an arbitrary group of cardinality the number of orbits of $\mathfrak{S}(J_i) \curvearrowright \mathscr{S}(J_i)$. 
\end{itemize}
Then there exists an injective morphism $\varphi : G \hookrightarrow \Gamma \mathcal{G}$, where $\mathcal{G}= \{ G_i \mid i \in I\}$, and a $\varphi$-equivariant embedding $X \hookrightarrow \QM$ whose image is gated.
\end{thm}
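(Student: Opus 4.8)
The plan is to mimic the warm-up argument from Section \ref{section:warmup}, but with two essential upgrades dictated by the quasi-median setting: hyperplanes now delimit possibly more than two sectors, and cliques are now arbitrary (not just edges). First I would attach to every oriented edge $e$ of $X$ a syllable of $\Gamma\mathcal{G}$. The vertex-group coordinate should record which hyperplane $G$-orbit the edge crosses, via the $\mathfrak{S}(J_i)$-factor, while the $K_i$-factor should record \emph{which} of the orbits of sectors the edge points into. Concretely, for an edge dual to a hyperplane $J$ in the orbit of $J_i$, one picks $g\in G$ with $gJ=J_i$ (using freeness of $\mathfrak{S}(J_i)\curvearrowright\mathscr{S}(J_i)$ to make the $\mathfrak{S}(J_i)$-component well-defined as the permutation carrying the tail sector to the head sector, and using $K_i$ to name the $\mathfrak{S}(J_i)$-orbit of the sector the edge enters). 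The label of an oriented path is then the product of these syllables, and I define $\Phi(x)=\ell([x_0,x])$ for a fixed basepoint $x_0$, with $\varphi(g)=\Phi(gx_0)$.

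Next I would verify well-definedness of $\Phi$, i.e.\ path-independence, which by Lemma \ref{lem:GeodFlipSquare} reduces to checking invariance of $\ell$ under flipping squares, shortening triangles, and removing backtracks. Backtracks cancel because $\ell(e^{-1})=\ell(e)^{-1}$. For triangles: the three edges of a triangle are dual to a common hyperplane (they lie in a common clique), so their labels live in a single vertex-group $G_i$, and the defining relation should force $\ell$ to behave multiplicatively inside that clique---this is where the group structure of $\mathfrak{S}(J_i)\oplus K_i$ and the clique-stabiliser come into play, so one must check the syllables multiply correctly. For square-flips: the two hyperplanes crossed are transverse, hence by the definition of $\Gamma$ their labels lie in \emph{adjacent} vertex-groups and commute, so $\ell$ is unchanged. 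That $\varphi$ is a homomorphism then follows exactly as in the warm-up, using path-independence and $G$-invariance of the labelling (which the construction is designed to guarantee).

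The decisive step is injectivity, proved by showing that for any geodesic $[x_0,x]$ the word $\ell([x_0,x])$ is graphically reduced in $\Gamma\mathcal{G}$. I would argue by contradiction: a non-reduced word has two syllables in the same vertex-group that can be amalgamated after shuffling. Shuffling corresponds, via Lemma \ref{lem:GeodHypOrder}, to flipping squares along the geodesic to bring two edges crossing $G$-translates of the same $J_i$ adjacent; the shuffling is \emph{permitted} precisely because the intervening hyperplanes are transverse, and hyperplane-specialness upgrades ``has a transverse $G$-translate'' to ``genuinely transverse'', so Lemma \ref{lem:GeodHypOrder} applies at each step. Once the two edges crossing $J_i$-translates are made consecutive, either they lie in a common clique (contradicting that a geodesic crosses a hyperplane at most once) or their carriers are tangent (contradicting hyperplane-specialness, since two $G$-translates of the same hyperplane are neither transverse nor tangent). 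Either way the syllables could not have amalgamated nontrivially, so the word was reduced. I expect this reduction argument---keeping careful track of how the $K_i$-coordinate and the $\mathfrak{S}(J_i)$-coordinate interact under amalgamation, and ensuring the ``same vertex-group after shuffling'' genuinely forces a forbidden hyperplane configuration---to be the main obstacle.

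Finally, for equivariance and gatedness, $\varphi$-equivariance of $\Phi$ is immediate from the homomorphism computation. For the image being gated I would invoke Lemma \ref{lem:GatedSub}: it suffices to check that $\Phi(X)$ is connected, locally convex, and contains its triangles. Connectivity and that $\Phi$ sends edges to edges and cliques into cliques (Lemma \ref{lem:GPcliques}) handle the triangle condition; local convexity should follow from the fact that $\Phi$ preserves geodesics (a consequence of the graphically-reduced claim, since in $\QM$ graphically reduced words trace out geodesics), together with the compatibility of transverse hyperplanes in $X$ with commuting (adjacent) vertex-groups in $\Gamma$, which matches up squares of $X$ with prisms of $\QM$ via Lemma \ref{lem:GPprisms}.
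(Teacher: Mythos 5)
Your overall route is the paper's: label edges by syllables of $\Gamma\mathcal{G}$ built from the sector data, prove path-independence via Lemma \ref{lem:GeodFlipSquare}, prove injectivity by showing geodesics have graphically reduced labels via Lemma \ref{lem:GeodHypOrder} plus specialness, and get gatedness from Lemma \ref{lem:GatedSub}. The injectivity argument in particular is essentially identical to the paper's Claim \ref{claim:Embedding} (one small slip: after making the two edges consecutive, the distinct dual hyperplanes may be tangent \emph{or transverse}, not just tangent, but both are excluded by specialness, so this is harmless). However, two of the points you explicitly defer are the actual crux of the proof, and as sketched they do not go through. First, your per-edge label is not well-defined: when the tail and head sectors of an edge lie in different $\mathfrak{S}(J_i)$-orbits, there is no ``permutation carrying the tail sector to the head sector'' in $\mathfrak{S}(J_i)$, and even within one orbit the choice of $g$ with $gJ=J_i$ is ambiguous up to $\mathrm{stab}(J_i)$, which acts non-trivially on $\mathscr{S}(J_i)$. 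The paper's resolution is to label \emph{sectors} first, not edges: extend the free action $\mathfrak{S}(J_i)\curvearrowright\mathscr{S}(J_i)$ to a free \emph{and transitive} action of $G_i=\mathfrak{S}(J_i)\oplus K_i$ by choosing a basepoint sector in each $\mathfrak{S}(J_i)$-orbit, normalise the choice of $g$ by requiring $gS(J)$ and $S(J_i)$ to lie in the same $K_i$-orbit (Claim \ref{claim:translate}; freeness then kills the residual ambiguity), and set $\ell(e):=\ell(S_1)^{-1}\ell(S_2)$. With this sector-first definition, the triangle-shortening invariance you flag as ``one must check'' becomes a one-line telescoping $\ell(S_1)^{-1}\ell(S_2)\cdot\ell(S_2)^{-1}\ell(S_3)=\ell(S_1)^{-1}\ell(S_3)$, and the $G$-invariance of the labelling---which you need both for $\varphi$ to be a homomorphism and for equivariance, and which you assert the construction ``is designed to guarantee''---requires a genuine argument (Claim \ref{claim:LabelInvariant}) that again leans on the $K_i$-orbit normalisation.

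Second, your gatedness argument has a real gap: ``$\Phi$ sends cliques into cliques'' does not yield the triangle condition of Lemma \ref{lem:GatedSub}. One needs that the image of each clique of $X$ is an \emph{entire} clique $\Phi(x)G_i$ of $\QM$; otherwise a triangle of $\QM$ sharing an edge with $\Phi(X)$ need not lie in $\Phi(X)$. This surjectivity (the paper's Claims \ref{claim:CliqueLabel} and \ref{claim:ImageTriangles}) is precisely where the exact cardinality hypothesis on $K_i$---equivalently, transitivity of the extended $G_i$-action on $\mathscr{S}(J_i)$---is used, and it is not a formality: Remark \ref{remark:BiggerGroups} points out that with a larger $K_i$ the same construction still embeds $X$, but the image is only convex, not gated. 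So the statement you are proving would literally fail at this step without the missing argument. Your local-convexity sketch, by contrast, contains the right mechanism (labels in adjacent vertex-groups give hyperplanes with transverse translates; specialness upgrades tangency to transversality; Lemma \ref{lem:SpanSquare} produces the square in $X$), matching the paper's Claim \ref{claim:ImageLocConvex}.
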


\begin{proof}
First, notice that, for every $i \in I$, the free action $\mathfrak{S}(J_i) \curvearrowright \mathscr{S}(J_i)$ extends to a free and transitive action $G_i \curvearrowright \mathscr{S}(J_i)$. Indeed, by definition of $K_i$ there exists a bijection between $K_i$ and the set of $\mathfrak{S}(J_i)$-orbits of $\mathscr{S}(J_i)$, and $\mathfrak{S}(J_i)$ acts freely on each $\mathfrak{S}(J_i)$-orbit of $\mathscr{S}(J_i)$. Consequently, if for every $k \in K_i$ we fix a basepoint $x_k \in \mathscr{S}(J_i)$ in the $\mathfrak{S}(J_i)$-orbit labelled by $k$ under the previous bijection, then 
$$\left\{ \begin{array}{ccc} \mathfrak{S}(J_i) \oplus K_i & \to & \mathscr{S}(J_i) \\ (g,k) & \mapsto & g \cdot x_k \end{array} \right.$$
is a $\mathfrak{S}(J_i)$-equivariant bijection. Therefore, the action of $\mathfrak{S}(J_i) \oplus K_i$ on itself by left-multiplication can be transferred to a free and transitive action of $G_i$ on $\mathscr{S}(J_i)$ which extends the action of $\mathfrak{S}(J_i)$.

\medskip \noindent
From now on, we fix such extensions $G_i \curvearrowright \mathscr{S}(J_i)$, $i\in I$. As a consequence, the sectors delimited by $J_i$ can be naturally labelled by $G_i$. We want to extend such a labelling equivariantly to all the hyperplanes of $X$. 

\medskip \noindent
For every hyperplane $J$, let $S(J)$ denote the sector delimited by $J$ which contains a fixed basepoint $x_0 \in X$. 

\begin{claim}\label{claim:translate}
For every hyperplane $J$, there exist $i \in I$ and $g \in G$ such that $gJ = J_i$ and such that $g S(J)$ and $S(J_i)$ belong to the same $K_i$-orbit. 
\end{claim}

\noindent
Of course, there exist $i \in I$ and $h \in G$ such that $hJ=J_i$. Because $\mathfrak{S}(J_i)$ acts transitively on the $K_i$-orbits of $\mathscr{S}(J_i)$, there exists some $k \in \mathrm{stab}(J_i)$ such that $k \cdot hS(J)$ and $S(J_i)$ belong to the same $K_i$-orbit. Setting $g:=kh$ proves the claim. 

\medskip \noindent
\textbf{Labelling the sectors.} If $J$ is an arbitrary hyperplane of $X$, let $i \in I$ and $g \in G$ be as given by Claim \ref{claim:translate}. A sector $S$ delimited $J$ is labelled by the unique element of $G_i$ which sends $S(J_i)$ to $gS$. Let $\ell(S)$ denote this label. 

\medskip \noindent
Notice that the label of $S$ does not depend on the choice of $g$. Indeed, let $h \in G$ be another element satisfying Claim \ref{claim:translate}. Then $gh^{-1}$ stabilises $J_i$ and the sectors $gS(J)$, $S(J_i)$, $hS(J)$ all belong to the same $K_i$-orbit. In other words, $gh^{-1}$ defines an element of $\mathfrak{S}(J_i)$ which stabilises a $K_i$-orbit, which implies that $gh^{-1}$ represents the trivial element of $\mathfrak{S}(J_i)$, and we conclude that $g S(J)=gh^{-1} \cdot hS(J)=hS(J)$. 

\medskip \noindent
\textbf{Labelling the oriented paths.} If $e \subset X$ is an oriented edge, let $S_1$ (resp. $S_2$) denote the sector delimited by the hyperplane dual to $e$ which contains the initial endpoint of $e$ (resp. the terminal endpoint of $e$). The label of $e$ is defined as $\ell(e):= \ell(S_1)^{-1} \ell(S_2)$. More generally, if $\gamma= e_1 \cup \cdots \cup e_n$ is an oriented path, then its label is defined as the word $\ell(\gamma):= \ell(e_1) \cdots \ell(e_n)$, most of the time thought of as an element of $\Gamma \mathcal{G}$. 

\medskip \noindent
Because we may consider the label of an oriented path either as a word or as an element of $\Gamma \mathcal{G}$, we will use the following notation in order to avoid any ambiguity. Given two labels $a$ and $b$, we denote by $a=b$ the equality in the group $\Gamma \mathcal{G}$, and $a \equiv b$ the equality as words. 

\medskip \noindent
We record below two fundamental facts about the labelling we have constructed: it is $G$-invariant, and it sends geodesics to graphically reduced words.

\begin{claim}\label{claim:LabelInvariant}
Let $e \subset X$ be an oriented edge and $g \in G$ an element. Then $\ell(g \cdot e)=\ell(e)$. 
\end{claim}

\noindent
Let $J$ denote the hyperplane dual to $e$. According to Claim \ref{claim:translate}, there exist $i \in I$ and $h,k \in G$ such that $hJ=J_i=k \cdot gJ$ and such that $S(J_i)$, $hS(J)$, $k S(gJ)$ all belong to the same $K_i$-orbit of $\mathscr{S}(J_i)$. As a consequence, $kgh^{-1}$ stabilises $J_i$ so it defines an element $\sigma$ of $\mathfrak{S}(J_i)$. Notice that, if $S$ is an arbitrary sector delimited by $J$, then $\sigma$ sends $hS$ to $k \cdot gS$ (as elements of $\mathscr{S}(J_i)$). Because $\ell(S)$ is the unique element of $G_i$ sending $S(J_i)$ to $hS$ and that $\ell(gS)$ is the unique element of $G_i$ sending $S(J_i)$ to $kgS$, necessarily $\ell(gS)= \sigma \ell(S)$ in $G_i$. The key observation is that $\sigma$ does not depend on $S$. Therefore, if $S_1$ (resp. $S_2$) denotes the sector delimited by $J$ which contains the initial endpoint of $e$ (resp. the terminal endpoint of $e$), then 
$$\ell(ge)= \ell(gS_1)^{-1} \ell(gS_2)= \ell(S_1)^{-1} \sigma^{-1} \sigma \ell(S_2) = \ell(S_1)^{-1} \ell(S_2)= \ell(e),$$
concluding the proof of our claim.

\begin{claim}\label{claim:Embedding}
For all vertices $x,y \in X$ and every geodesic $[x,y]$ from $x$ to $y$, the word $\ell([x,y])$ is graphically reduced in $\Gamma \mathcal{G}$.
\end{claim}

\noindent
Assume for contradiction that there exist vertices $x,y \in X$ and a geodesic $[x,y]$ from $x$ to $y$, which we decompose as a concatenation of edges $e_1 \cup \cdots \cup e_r$, such that $\ell([x,y])$ is not graphically reduced in $\Gamma \mathcal{G}$. So there exist two indices $1 \leq i < j \leq r$ such that $\ell(e_i)$ and $\ell(e_j)$ belong to the same vertex-group of $\Gamma \mathcal{G}$ and such that $\ell(e_k)$ belongs to a vertex-group adjacent to the previous one for every $i < k<j$. In other words, if $J_k$ denotes the hyperplane dual to $e_k$ for every $1 \leq k \leq r$, then $J_i$ and $J_j$ belong to the same $G$-orbit and, for every $i<k<j$, a $G$-translate of $J_k$ is transverse to $J_j$. Because $G$ acts specially on $X$, notice that, if $j\geq i+2$, then the hyperplane $J_{j-1}$ cannot be tangent to $J_j$, so $J_{j-1}$ and $J_j$ are transverse. As a consequence of Lemma \ref{lem:GeodHypOrder}, there exists a geodesic from $x$ to $y$ which crosses the hyperplanes $J_1, \ldots, J_{j-2},J_j,J_{j-1},J_{j+1}, \ldots, J_r$ in that order. By iterating the argument, it follows that we can choose carefully our geodesic $[x,y]$ so that $j=i+1$. In other words, $J_i$ and $J_j$ are tangent or transverse. But we know that $J_i$ and $J_j$ belong to the same $G$-orbit, contradicting the specialness of the action. The proof of our claim is complete.

\medskip \noindent
\textbf{The embedding.} Fix a second basepoint $x_1 \in X$, possibly different from $x_0$. In order to prove our theorem, we want to show that
$$\Phi : \left\{ \begin{array}{ccc} X & \to & X(\Gamma, \mathcal{G}) \\ x & \mapsto & \ell \left( \text{path from $x_1$ to $x$} \right)  \end{array} \right.$$
defines an embedding whose image is gated, that
$$\varphi : \left\{ \begin{array}{ccc} G & \to & \Gamma \mathcal{G}  \\ g & \to & \Phi(g \cdot x_1) \end{array} \right.$$
is an injective morphism, and that $\Phi$ is $\varphi$-equivariant. 

\medskip \noindent
First of all, we claim that $\Phi$ is well-defined, i.e., the label of a path from $x_1$ to $x$ (as an element of $\Gamma \mathcal{G}$) does not depend on the path we choose. As a consequence of Lemma~\ref{lem:GeodFlipSquare}, it suffices to show that flipping a square, shortening a triangle and removing a backtrack do not modify the label of a path.

\medskip \noindent
We begin by noticing that, if $e \cup f$ is an oriented path between two opposite vertices of a square and if $e' \cup f'$ denotes the image of $e \cup f$ under the reflection along the diagonal of our square, then $e \cup f$ and $e' \cup f'$ have the same label. Indeed, observe that the endpoints of $e$ and $f'$ belong to the same sectors delimited by the hyperplane dual to $e$ and $f'$, and similarly for $f$ and $e'$, so $\ell(e \cup f) \equiv \ell(e) \ell(f)$ and $\ell(e' \cup f') \equiv \ell(f) \ell(e)$. But $\ell(e)$ and $\ell(f)$ belong to two vertex-groups of $\Gamma \mathcal{G}$ which are adjacent since the two hyperplanes dual to $e$ and $f$ are transverse. Therefore,
$$\ell(e \cup f ) =\ell(e) \ell(f)= \ell(f) \ell(e) = \ell(e' \cup f'),$$
so that flipping a square in a path does not modify its label (in $\Gamma \mathcal{G}$). Next, if $e \cup f$ is a backtrack, then
$$\ell(e \cup f) = \ell(e) \ell(f)= \ell(e) \ell(e)^{-1}= 1,$$
so that removing a backtrack to a path does not modify its label (in $\Gamma \mathcal{G}$) either. Finally, let $e \cup f$ be the concatenation of two successive edges in a triangle and let $e'$ denote the edge of this triangle with the same endpoints as $e\cup f$. Let $J$ denote the hyperplane containing our triangle, $S_1$ the sector delimited by $J$ which contains the initial point of $e$, $S_2$ the sector delimited by $J$ which contains the terminal endpoint of $e$, and $S_3$ the sector delimited by $J$ which contains the final point of $f$. Then
$$\ell(e \cup f ) = \ell(e) \ell(f)= \ell(S_1)^{-1} \ell(S_2) \cdot \ell(S_2)^{-1} \ell(S_3)= \ell(S_1)^{-1} \ell(S_3) = \ell(e'),$$
so that shortening a triangle does not modify the label of a path. Thus, we have proved that $\Phi$ is well-defined.

\medskip \noindent
It is worth noticing that our map $\Phi$ essentially does not depend on the basepoint $x_1$ we choose. When we allow the basepoint $x_1$ to vary, we denote by $\Phi_z$ the map obtained from $\Phi$ by replacing $x_1$ with another vertex $z \in X$. Then:

\begin{claim}\label{claim:Diagram}
For all vertices $p,q \in X$, we have the commutative diagram

\centerline{\xymatrix{
X \ar[r]^{\Phi_q} \ar[d]_{\Phi_p} & \QM \ar[dl]^{m_g} \\ \QM
}}

\noindent
where the isometry $m_g$ denotes the left-multiplication by $g:= \ell([p,q])$. 
\end{claim}

\noindent
Indeed, 
$$\Phi_p(x)= \ell([p,x]) = \ell([p,q] \cup [q,x]) = \ell([p,q]) \cdot \Phi_q(x)$$
for every vertex $x \in X$.

\medskip \noindent
We are now ready to show that $\varphi$ is an injective morphism and that $\Phi$ is a $\varphi$-equivariant embedding.

\begin{claim}\label{claim:injective}
The map $\Phi$ is an isometric embedding. In particular, it is injective.
\end{claim}

\noindent
Let $x,y \in X$ be two vertices. Fix a geodesic $[x,y]$ between $x$ and $y$ in $X$. As a consequence of Claim \ref{claim:Diagram}, 
$$d(\Phi(x), \Phi(y))= d( \Phi_x(x), \Phi_x(y)) = d(1, \ell([x,y]).$$
But $\ell([x,y]$ is a graphically reduced word according to Claim \ref{claim:Embedding}, so $d(1, \ell([x,y]))$ coincides with the length of $\ell([x,y])$, or equivalently with the number of edges of $[x,y]$. We conclude that $d(\Phi(x),\Phi(y))= d(x,y)$. 


\begin{claim}\label{claim:Equi}
For every $x \in X$ and $g \in G$, $\Phi(gx)= \varphi(g) \Phi(x)$.
\end{claim}

\noindent
By fixing arbitrary paths $[x_1,gx]$, $[x_1,gx_1]$ and $[x_1,x]$ in $X$, we have
$$\Phi(gx) = \ell([x_1,gx])= \ell([x_1,gx_1] \cup g[x_1,x]) = \ell([x_1,gx_1]) \ell([x_1,x])= \varphi(x) \Phi(x),$$
where the penultimate equality is justified by Claim \ref{claim:LabelInvariant}. Our claim is proved.

\medskip \noindent
Notice that Claim \ref{claim:Equi} implies that $\varphi$ is a morphism. Indeed, for every $g,h \in G$, we have
$$\varphi(gh) = \Phi(gh \cdot x_1)= \varphi(g) \Phi(h \cdot x_1)= \varphi(g) \varphi(h).$$
Moreover, the injectivity of $\varphi$ follows from the injectivity of $\Phi$ provided by Claim \ref{claim:injective}, and Claim \ref{claim:Equi} precisely means that $\Phi$ is $\varphi$-equivariant. It remains to show that the image of $\Phi$ is a gated subgraph of $\QM$. Notice that, thanks to Claim \ref{claim:injective}, we already know that it is an induced subgraph.

\begin{claim}\label{claim:CliqueLabel}
Let $x \in X$ be a vertex and $i \in I$ an index. If there exists some $a \in G_i$ such that $x$ is the initial vertex of an edge of $X$ labelled by $a$, then, for every $b \in G_i$, $x$ is the initial vertex of an edge labelled by $b$.
\end{claim}

\noindent
Fix an element $b \in G_i$. Let $C$ denote the clique of $X$ containing our edge labelled by $a$, and let $J$ denote the hyperplane of $X$ which contains it. By construction, the sectors delimited by $J$ are labelled by elements of $G_i$, and conversely every element of $G_i$ labels a sector delimited by $J$. Let $e$ be the edge of $C$ which connects $x$ to the sector delimited by $J$ which is labelled by $cb$, where $c \in G_i$ is the label of the sector containing $x$. Then $\ell(e)= c^{-1} \cdot cb = b$, so $e$ is the edge we are looking for.

\begin{claim}\label{claim:ImageTriangles}
The image under $\Phi$ of a clique of $X$ is a clique of $\QM$. As a consequence, the image of $\Phi$ contains its triangles.
\end{claim}

\noindent
Let $C$ be a clique of $X$. Fix an arbitrary vertex $x \in C$. The edges of $C$ are all labelled by the same group $G_i$, $i \in I$. It follows from Claim \ref{claim:CliqueLabel} that $\Phi(C) \supset \Phi(x) G_i$. On the other hand, $\Phi(x)G_i$ is a clique in $\QM$ according to Lemma \ref{lem:GPcliques}, so $\Phi(C) \subset \Phi(x) G_i$. The desired conclusion follows. 

\begin{claim}\label{claim:ImageLocConvex}
The image of $\Phi$ is locally convex. 
\end{claim}

\noindent
Let $e_1,e_2 \subset X$ be two edges which share their initial point and such that $\Phi(e_1)$ and $\Phi(e_2)$ span a square $S$. Necessarily, $\ell(e_1)$ and $\ell(e_2)$ belong to adjacent vertex-groups, which means that the hyperplane dual to $e_1$ has a $G$-translate which is transverse to the hyperplane dual to $e_2$. Because $G$ acts specially on $X$, it follows that the hyperplanes dual to $e_1$ and $e_2$ are transverse, so that $e_1$ and $e_2$ span a square in $X$ according to Lemma \ref{lem:SpanSquare}. The image of this square under $\Phi$ must be $S$ as $\QM$ does not contain $K_{3,2}$ as an induced subgraph, concluding the proof of our claim.

\medskip \noindent
By combining Lemma \ref{lem:GatedSub} with Claims \ref{claim:ImageTriangles} and \ref{claim:ImageLocConvex}, we conclude that the image of $\Phi$ is a gated subgraph. The proof of our theorem is complete. 
\end{proof}

\begin{remark}\label{remark:BiggerGroups}
It is worth noticing that, if replace $K_i$ in the statement of Theorem~\ref{thm:BigEmbedding} with an arbitrary group of cardinality \emph{at least} the number of orbits of $\mathfrak{S}(J_i) \curvearrowright \mathscr{S}(J_i)$, then our proof still shows that $G$ embeds into the (bigger) graph product $\Gamma \mathcal{G}$. Indeed, the precise assumption made on the cardinality of $K_i$ is only used in the proof of Claim~\ref{claim:CliqueLabel}. However, under this weaker assumption, the image of $X \hookrightarrow \QM$ is not longer gated, but only convex. 
\end{remark}

\begin{remark}
Observe that, if $X$ is a median graph in Theorem \ref{thm:BigEmbedding}, then the graph product we obtain is a right-angled Coxeter group. Indeed, for every $i \in I$, $\mathscr{S}(J_i)$ has cardinality two, so either $\mathfrak{S}(J_i)$ has order two and $K_i$ is trivial or $\mathfrak{S}(J_i)$ is trivial and $K_i$ has order two. Consequently, $G_i$ is cyclic of order two for every $i \in I$, and $\Gamma \mathcal{G}$ is a right-angled Coxeter group. So we recover that groups acting specially on a CAT(0) cube complexes embed into right-angled Coxeter groups.
\end{remark}

\noindent
When applying Theorem \ref{thm:BigEmbedding}, it may be difficult to understand the groups $\mathfrak{S}(J)$. Our next statement shows that, when the group acts with finitely many orbits of vertices, these groups are essentially clique-stabilisers (which are much easier to understand). 

\begin{prop}\label{prop:SJclique}
Let $G$ be a group acting specially on a quasi-median graph $X$ with finitely many orbits of vertices. For every hyperplane $J$ and every clique $C \subset J$, the image of $\mathrm{stab}(C)$ in $\mathfrak{S}(J)$ is faithful and has finite index.
\end{prop}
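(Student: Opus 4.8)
The plan is to transfer everything to the action of $\mathrm{stab}(C)$ on the vertex set $V(C)$. Recall from \cite{Qm} the structural fact that, for a hyperplane $J$ and a dual clique $C$, the vertices of $C$ lie in pairwise distinct sectors and every sector of $J$ contains exactly one vertex of $C$; sending a vertex of $C$ to the sector containing it thus gives a bijection $V(C) \leftrightarrow \mathscr{S}(J)$. Since $gC = C$ forces $gJ = J$, we have $\mathrm{stab}(C) \subseteq \mathrm{stab}(J)$, and this bijection intertwines the permutation action of $\mathrm{stab}(C)$ on $V(C)$ with the action of its image in $\mathfrak{S}(J)$ on $\mathscr{S}(J)$. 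In particular an element of $\mathrm{stab}(C)$ lies in the kernel of $\mathrm{stab}(C) \to \mathfrak{S}(J)$ exactly when it fixes $V(C)$ pointwise, and the index of the image in $\mathfrak{S}(J)$ is governed by the orbit structure of $\mathrm{stab}(C) \curvearrowright V(C)$. So the proposition splits into showing that $\mathrm{stab}(C)$ acts freely on $V(C)$ (faithfulness) and with finitely many orbits (finite index).

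The first ingredient is a rigidity statement from hyperplane-specialness: if $g \in G$ maps some vertex of $C$ into $V(C)$, then $gC = C$. Indeed, if $gv = w$ with $v,w \in V(C)$, then $w \in N(J) \cap N(gJ)$; were $gJ \neq J$ this would make $J$ and $gJ$ transverse or tangent, contradicting hyperplane-specialness, so $gJ = J$, and then $C$ and $gC$ are cliques dual to $J$ sharing $w$, whence $gC = C$ by Lemma \ref{lem:DisjointCliques}. A direct consequence is that two vertices of $C$ are in the same $G$-orbit if and only if they are in the same $\mathrm{stab}(C)$-orbit; as $G$ has finitely many orbits of vertices, $\mathrm{stab}(C) \curvearrowright V(C)$ has finitely many orbits, say $m$. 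Transporting through the bijection above, the image of $\mathrm{stab}(C)$ has $m$ orbits on $\mathscr{S}(J)$. Since it is a subgroup of $\mathfrak{S}(J)$, which acts \emph{freely} on $\mathscr{S}(J)$ by specialness, the number of its orbits contained in a single $\mathfrak{S}(J)$-orbit equals its index in $\mathfrak{S}(J)$; hence $[\mathfrak{S}(J) : \mathrm{im}(\mathrm{stab}(C))] \leq m < \infty$, which is the finite-index assertion.

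The main obstacle is faithfulness, i.e. that an element $g \in \mathrm{stab}(C)$ fixing $V(C)$ pointwise must be trivial; this is precisely where the freeness clause of specialness (beyond hyperplane-specialness) enters. I would establish the stronger statement that $G \curvearrowright X$ is free on vertices. Suppose $g \neq 1$ has nonempty fixed vertex set $F$; by faithfulness of the action and connectedness of $X$, there is an edge $e = \{x,y\}$ with $x \in F$ and $y \notin F$, and I let $H$ be its dual hyperplane. If $gH \neq H$, then $x \in N(H) \cap N(gH)$ makes $H$ and $gH$ transverse or tangent, contradicting hyperplane-specialness. Hence $gH = H$; since $g$ fixes $x$, it fixes the sector of $H$ containing $x$, so by freeness of $\mathfrak{S}(H) \curvearrowright \mathscr{S}(H)$ the element $g$ fixes \emph{every} sector of $H$. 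But $e$ lies in the clique $C'$ of $H$ through $x$, which is unique by Lemma \ref{lem:DisjointCliques} and hence preserved by $g$, and whose vertices are indexed by the sectors of $H$; as $g$ fixes all those sectors it fixes every vertex of $C'$, in particular $gy = y$, contradicting $y \notin F$.

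Thus no nontrivial $g$ fixes a vertex, so $F = V(X)$ and $g = 1$; applying this to a kernel element, which fixes $V(C)$ pointwise, shows the kernel of $\mathrm{stab}(C) \to \mathfrak{S}(J)$ is trivial. Together with the finite-index count, this proves the proposition. The two genuinely geometric inputs are the rigidity lemma and the freeness-on-vertices argument, the latter being the delicate step, since it is what converts the abstract freeness of $\mathfrak{S}(H) \curvearrowright \mathscr{S}(H)$ into rigidity of the ambient action.
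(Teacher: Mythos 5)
Your proof is correct and takes essentially the same route as the paper: your freeness-on-vertices argument is precisely the paper's Lemma \ref{lem:TrivialVertexStab} (you argue by contradiction along a boundary edge of the fixed-point set where the paper propagates fixed points to neighbours, an immaterial variation), and your rigidity lemma plus orbit count is exactly the paper's Fact \ref{fact:CliqueFinitelyOrbits} combined with the freeness of $\mathfrak{S}(J) \curvearrowright \mathscr{S}(J)$. The only step you make more explicit than the paper is the equivariant bijection between $V(C)$ and $\mathscr{S}(J)$, which the paper uses implicitly when passing from orbits of vertices of $C$ to orbits of sectors.
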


\noindent
We begin by proving a preliminary lemma.

\begin{lemma}\label{lem:TrivialVertexStab}
If $G$ acts specially on a quasi-median graph $X$, then vertex-stabilisers are trivial.
\end{lemma}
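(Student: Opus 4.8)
The plan is to fix an element $g \in G$ that fixes some vertex $x$, and to show that $g$ must fix every neighbour of $x$. Since the argument applies verbatim at any fixed vertex, connectedness of $X$ and induction on distance from $x$ will then force $g$ to fix every vertex; as an automorphism of a simplicial graph fixing all vertices is the identity, faithfulness of the action (which is built into the definition of a hyperplane-special, hence special, action) will give $g = 1$.

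First I would show that $g$ stabilises every hyperplane dual to an edge at $x$. If $e$ is such an edge and $J$ its dual hyperplane, then $ge$ is an edge at $gx = x$ dual to $gJ$, so the carriers $N(J)$ and $N(gJ)$ both contain $x$ and hence meet. By the first condition in the definition of a hyperplane-special action, $J$ and $gJ$ are neither transverse nor tangent; since two \emph{distinct} hyperplanes with intersecting carriers are by definition either transverse or tangent, we must have $gJ = J$.

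Next I would analyse the action of $g$ on sectors. Let $S \in \mathscr{S}(J)$ be the sector containing $x$; since $g$ fixes $x$ and stabilises $J$, it fixes $S$. Here the freeness hypothesis enters: as $\mathfrak{S}(J) \curvearrowright \mathscr{S}(J)$ is free and the image of $g$ in $\mathfrak{S}(J)$ fixes $S$, this image is trivial, so $g$ preserves \emph{every} sector of $J$ setwise. I would then bring in the clique $C \subset X$ containing $e$. Because $C$ is dual to $J$ and $gJ = J$, the translate $gC$ is again a clique dual to $J$, and it contains $gx = x \in C$; by Lemma \ref{lem:DisjointCliques}, two distinct cliques dual to $J$ are disjoint, so $gC = C$ and $g$ stabilises $C$.

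Finally, to conclude that $g$ fixes each vertex of $C$ — in particular the neighbour $x'$ of $x$ lying across $e$ — I would use that distinct vertices of a clique dual to $J$ lie in distinct sectors of $J$, a standard feature of the hyperplane structure theory (see \cite{Qm} and Theorem \ref{thm:BigThmQM}). Since $g$ permutes the vertices of $C$ while preserving each sector setwise, each vertex of $C$ is sent to a vertex of $C$ in its own sector, and this injectivity forces it to be fixed; thus $gx' = x'$. I expect this last injectivity input to be the main obstacle, as its proper justification rests on the carrier/sector structure imported from \cite{Qm}; granting it, the remaining deductions ($gJ = J$, triviality in $\mathfrak{S}(J)$, and $gC = C$) are formal, and the inductive propagation across the connected graph $X$ is routine.
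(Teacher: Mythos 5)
Your proposal is correct and follows essentially the same route as the paper's proof: specialness forces $gJ = J$ for each hyperplane $J$ dual to an edge at the fixed vertex, Lemma~\ref{lem:DisjointCliques} gives $gC = C$, freeness of $\mathfrak{S}(J) \curvearrowright \mathscr{S}(J)$ makes $g$ preserve every sector and hence fix $C$ pointwise, and the conclusion propagates by connectedness and faithfulness. The one step you flag as the main obstacle --- that distinct vertices of a clique dual to $J$ lie in distinct sectors --- is exactly the fact the paper also uses (implicitly, via the sector structure of \cite{Qm}), so there is no real divergence.
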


\begin{proof}
Assume that $g \in G$ fixes a vertex $x \in X$. 

\medskip \noindent
Let $y \in X$ be a neighbor of $x$. Let $C$ denote the clique which contains the edge connecting $x$ and $y$, and $J$ the hyperplane containing $C$. Because $J$ and $gJ$ are neither tangent nor transverse, necessarily $gJ=J$, so that $gC=C$ as a consequence of Lemma \ref{lem:DisjointCliques}. Because the action $\mathfrak{S}(J) \curvearrowright \mathscr{S}(J)$ is free, necessarily $g$ stabilises all the sectors delimited by $J$, which implies that $g$ fixes $C$ pointwise, and in particular $gy=y$. 

\medskip \noindent
Thus, we have proved that $g$ fixes $x$ and all its neighbors. By reproducing the argument to the neighbors of $y$, and so on, we deduce that $g$ fixes $X$ pointwise. As the action of $G$ on $X$ is faithful, we conclude that $g$ must be trivial.
\end{proof}

\begin{proof}[Proof of Proposition \ref{prop:SJclique}.]
Fix a hyperplane $J$ of $X$ and a clique $C \subset J$. The fact that the image of $\mathrm{stab}(C)$ in $\mathfrak{S}(J)$ is faithful is a direct consequence of Lemma \ref{lem:TrivialVertexStab}. Because $\mathfrak{S}(J)$ acts freely on $\mathscr{S}(J)$, it suffices to show that (the image of) $\mathrm{stab}(C)$ acts on $\mathscr{S}(J)$ with finitely many orbits in order to deduce that (the image of) $\mathrm{stab}(C)$ has finite index in $\mathfrak{S}(J)$. In fact, we claim that $\mathrm{stab}(C)$ acts on $C$ with finitely many orbits of vertices, which is sufficient to conclude. 

\medskip \noindent
Notice that, if two vertices $x$ and $y$ of $C$ are in the same $G$-orbit, then they are in the same $\mathrm{stab}(C)$-orbit. Indeed, let $g \in G$ be such that $gx=y$. Then the cliques $C$ and $gC$ are either identical or tangent. But they cannot be tangent because the action is special, so $g \in \mathrm{stab}(C)$. As $G$ acts on $X$ with finitely many orbits of vertices, it follows~that:

\begin{fact}\label{fact:CliqueFinitelyOrbits}
$C$ contains only finitely many $\mathrm{stab}(C)$-orbits of vertices.
\end{fact}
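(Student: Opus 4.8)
The plan is to derive the Fact directly from the observation established in the sentence immediately preceding it, together with the standing hypothesis that $G$ acts with finitely many orbits of vertices. Once that observation is in place, the whole argument is a short counting argument, so I do not expect any serious obstacle.

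First I would record that the vertex set of $C$ is partitioned according to the $G$-orbits of vertices of $X$: each vertex of $C$ belongs to exactly one $G$-orbit. Since $X$ has only finitely many $G$-orbits of vertices in total, it follows that $V(C)$ meets only finitely many of them.

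Next I would invoke the observation just proved: if $x, y \in C$ lie in a common $G$-orbit, say $y = gx$ for some $g \in G$, then $C$ and $gC$ are either equal or tangent, and tangency is excluded because the action is special; hence $g \in \mathrm{stab}(C)$, and $x, y$ lie in the same $\mathrm{stab}(C)$-orbit. Thus, for each $G$-orbit $O$ of vertices, the intersection $O \cap V(C)$ is contained in a single $\mathrm{stab}(C)$-orbit. Conversely, each $\mathrm{stab}(C)$-orbit of vertices of $C$ lies inside one $G$-orbit, simply because $\mathrm{stab}(C) \leq G$. Consequently, the $\mathrm{stab}(C)$-orbits of vertices of $C$ are precisely the non-empty sets $O \cap V(C)$, as $O$ ranges over the $G$-orbits.

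Combining the two points, the number of $\mathrm{stab}(C)$-orbits of vertices in $C$ equals the number of $G$-orbits meeting $C$, which is finite; this is exactly the Fact. The only genuinely non-trivial step is the passage from ``$g$ carries one vertex of $C$ to another'' to ``$g \in \mathrm{stab}(C)$'', and this has already been handled above by appealing to specialness (a clique cannot be tangent to a $G$-translate of itself). Everything else is elementary set-theoretic counting, so I would simply spell it out and conclude.
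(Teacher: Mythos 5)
Your proof is correct and follows essentially the same route as the paper: the key step---that $y=gx$ with $x,y\in C$ forces $C$ and $gC$ to be identical rather than tangent by specialness, hence $g\in\mathrm{stab}(C)$---is exactly the observation the paper establishes in the sentence preceding the Fact, and the conclusion then follows, as you say, from the finiteness of the number of $G$-orbits of vertices. The counting details you spell out (each $G$-orbit meets $C$ in at most one $\mathrm{stab}(C)$-orbit, and $\mathrm{stab}(C)\leq G$ gives the converse containment) are precisely what the paper leaves implicit.
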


\noindent
This last observation concludes the proof of our proposition.
\end{proof}

\noindent
As a consequence of Proposition \ref{prop:SJclique}, we better understand the vertex-groups of the graph product into which we embed our group in Theorem \ref{thm:BigEmbedding}, under the additional assumption that the action on the quasi-median graph has only finitely many orbits of vertices.

\begin{cor}\label{cor:VertexGroups}
Let $G$ be a group which acts specially on a quasi-median graph $X$ with finitely many orbits of vertices. Following the notation in Theorem \ref{thm:BigEmbedding}, for every $i \in I$, $K_i$ is finite and $\mathfrak{S}(J_i)$ contains a clique-stabiliser as a finite-index subgroup; in particular, $G_i$ is a finite extension of a clique-stabiliser.
\end{cor}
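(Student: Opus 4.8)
The plan is to read off the corollary almost directly from Proposition~\ref{prop:SJclique} together with the structure of its proof, treating the two assertions ($K_i$ finite, and $\mathfrak{S}(J_i)$ containing a clique-stabiliser as a finite-index subgroup) separately before combining them. Fix an index $i \in I$ and choose a clique $C \subset J_i$. First I would dispatch the second assertion: by Proposition~\ref{prop:SJclique} the natural map $\mathrm{stab}(C) \to \mathfrak{S}(J_i)$ is faithful and has finite-index image. Since the map is injective, its image is isomorphic to $\mathrm{stab}(C)$, so $\mathfrak{S}(J_i)$ literally contains the clique-stabiliser $\mathrm{stab}(C)$ as a finite-index subgroup.

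Next I would establish that $K_i$ is finite, i.e. that $\mathfrak{S}(J_i) \curvearrowright \mathscr{S}(J_i)$ has only finitely many orbits, since that number is by definition the cardinality of $K_i$. The key input is already contained in the proof of Proposition~\ref{prop:SJclique}: via Fact~\ref{fact:CliqueFinitelyOrbits} and the $\mathrm{stab}(C)$-equivariant correspondence between the vertices of $C$ and the sectors of $J_i$, the group $\mathrm{stab}(C)$ acts on $\mathscr{S}(J_i)$ with finitely many orbits. Since the image of $\mathrm{stab}(C)$ is a subgroup of $\mathfrak{S}(J_i)$, each $\mathfrak{S}(J_i)$-orbit is a union of $\mathrm{stab}(C)$-orbits, so the number of $\mathfrak{S}(J_i)$-orbits is at most the number of $\mathrm{stab}(C)$-orbits; in particular it is finite, whence $K_i$ is finite.

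Finally I would assemble the two pieces. Recall from Theorem~\ref{thm:BigEmbedding} that $G_i = \mathfrak{S}(J_i) \oplus K_i$. Embedding $\mathrm{stab}(C)$ into the first factor yields $\mathrm{stab}(C) \leq G_i$ of index $[\mathfrak{S}(J_i) : \mathrm{stab}(C)] \cdot |K_i|$, which is finite by the two previous steps. Thus $G_i$ is a finite extension of a clique-stabiliser, as claimed.

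I do not expect a genuine obstacle here, as the corollary is essentially a repackaging of Proposition~\ref{prop:SJclique}. The only point requiring care is the transfer from ``finitely many $\mathrm{stab}(C)$-orbits of vertices of $C$'' to ``finitely many orbits on $\mathscr{S}(J_i)$'', which relies on identifying the vertices of a clique dual to $J_i$ with the sectors it delimits; but this identification is exactly what underlies the argument in Proposition~\ref{prop:SJclique}, so I would invoke it rather than re-prove it.
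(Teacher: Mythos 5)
Your proposal is correct and follows essentially the same route as the paper: the paper's proof likewise reduces the corollary to Proposition~\ref{prop:SJclique} for the finite-index clique-stabiliser claim and to Fact~\ref{fact:CliqueFinitelyOrbits} (via the $\mathrm{stab}(C)$-equivariant identification of the vertices of a clique dual to $J_i$ with the sectors of $J_i$) for the finiteness of the number of $\mathfrak{S}(J_i)$-orbits, hence of $K_i$. Your version merely spells out the orbit-counting and the index computation $[\mathfrak{S}(J_i):\mathrm{stab}(C)]\cdot |K_i|$ that the paper leaves implicit.
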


\begin{proof}
As a consequence of Proposition \ref{prop:SJclique}, it is sufficient to show that, for every hyperplane $J$ of $X$, $\mathfrak{S}(J)$ acts on $\mathscr{S}(J)$ with finitely many orbits. This observation is a direct consequence of Fact \ref{fact:CliqueFinitelyOrbits}.
\end{proof}

\noindent
By combining Theorem \ref{thm:BigEmbedding} with Corollary \ref{cor:VertexGroups}, one immediately gets:

\begin{cor}
Let $G$ be a group which acts specially on a quasi-median graph $X$ with finitely many orbits of vertices. Then $G$ embeds into a graph product of finite extensions of clique-stabilisers. 
\end{cor}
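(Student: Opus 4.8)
The plan is to assemble the two pieces of machinery already established, namely the embedding theorem and the identification of the vertex-groups. First I would apply Theorem \ref{thm:BigEmbedding} to the special action $G \curvearrowright X$: fixing representatives $\{J_i \mid i \in I\}$ of the hyperplane orbits, forming the graph $\Gamma$, and setting $G_i := \mathfrak{S}(J_i) \oplus K_i$ as in the statement of that theorem, one obtains an injective morphism $\varphi : G \hookrightarrow \Gamma \mathcal{G}$ with $\mathcal{G}= \{ G_i \mid i \in I\}$. Note that this step uses only the specialness of the action and makes no appeal to the finiteness of orbits.

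Second, I would bring in the hypothesis that $G$ acts with finitely many orbits of vertices in order to control the vertex-groups $G_i$. This is precisely the content of Corollary \ref{cor:VertexGroups}: under the finiteness assumption, each $K_i$ is finite and each $\mathfrak{S}(J_i)$ contains a clique-stabiliser as a finite-index subgroup, so that $G_i = \mathfrak{S}(J_i) \oplus K_i$ is a finite extension of a clique-stabiliser. Concretely, this reduces to Fact \ref{fact:CliqueFinitelyOrbits}, which guarantees that $\mathrm{stab}(C)$ acts on a clique $C \subset J$ with finitely many orbits of vertices, hence that $\mathfrak{S}(J)$ acts on $\mathscr{S}(J)$ with finitely many orbits by Proposition \ref{prop:SJclique}.

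Combining the two steps, $\Gamma \mathcal{G}$ is a graph product whose vertex-groups are finite extensions of clique-stabilisers, and $\varphi$ realises $G$ as a subgroup of it, which is exactly the desired conclusion. There is no genuine obstacle at this stage: all of the substantive work lies upstream, in Theorem \ref{thm:BigEmbedding} (the construction of the $G$-equivariant labelling of sectors and the verification via Claims \ref{claim:LabelInvariant} and \ref{claim:Embedding} that geodesics are sent to graphically reduced words) and in Proposition \ref{prop:SJclique} (relating $\mathfrak{S}(J)$ to clique-stabilisers through the triviality of vertex-stabilisers in Lemma \ref{lem:TrivialVertexStab}). The present statement is the formal synthesis of these results, the only point worth making explicit being that the single hypothesis of finitely many orbits of vertices feeds simultaneously into the finiteness of each $K_i$ and into the finite-index claim for clique-stabilisers inside $\mathfrak{S}(J_i)$.
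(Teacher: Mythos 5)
Your proposal is correct and follows exactly the paper's own route: the paper derives this corollary by combining Theorem \ref{thm:BigEmbedding} with Corollary \ref{cor:VertexGroups}, precisely as you do, with the finiteness of orbits entering only through Corollary \ref{cor:VertexGroups} (via Fact \ref{fact:CliqueFinitelyOrbits} and Proposition \ref{prop:SJclique}). Nothing is missing; your account of how the single cocompactness hypothesis feeds both the finiteness of $K_i$ and the finite-index claim for clique-stabilisers in $\mathfrak{S}(J_i)$ matches the paper.
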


\subsection{Gated-cocompact subgroups are virtual retracts}

\noindent
We saw in the previous section that a group acting specially on a quasi-median graph can be embedded into a graph product. In the present section, our goal is to show, under the additional assumption that the group acts with only finitely many orbits of vertices, that the image of this embedding is a virtual retract. Our proof is based on the following concept:

\begin{definition}
Let $G$ be a group acting on a quasi-median graph $X$. A subgroup $H \leq G$ is \emph{gated-cocompact} if there exists a gated subgraph $Y \subset X$ on which $H$ acts with finitely many orbits of vertices. 
\end{definition}

\noindent
Unless stated otherwise, a gated-cocompact subgroup of a graph product $\Gamma \mathcal{G}$ always refers to the action of $\Gamma \mathcal{G}$ on $\QM$. The main result of this section is that such subgroups are virtual retracts:

\begin{thm}\label{thm:VirtualRetract}
Let $\Gamma$ be a simplicial graph and $\mathcal{G}$ a collection of groups indexed by $V(\Gamma)$. A gated-cocompact subgroup $H \leq \Gamma \mathcal{G}$ is a virtual retract.
\end{thm}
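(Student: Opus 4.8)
The plan is to promote the gate projection onto $Y$ to an honest group retraction after passing to a finite-index subgroup, in the spirit of Haglund and Wise's canonical completion and retraction but carried out directly inside the quasi-median graph $\QM$. First the reductions and setup: since a virtual retract is preserved under conjugation, I may translate $Y$ and assume $1 \in Y$; as $\Gamma\mathcal{G}$ acts freely on the vertices of $\QM$ (it is a Cayley graph), the subgroup $H$ then acts freely and cocompactly on the gated subgraph $Y$, with $H \subseteq V(Y)$. Because $Y$ is gated, every vertex of $\QM$ has a unique gate in $Y$, giving a projection $\mathfrak{g} : \QM \to Y$; this map is $H$-equivariant and fixes $Y$, hence $H$, pointwise. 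The quotient $\bar{Y} := H \backslash Y$ is a finite prism complex with $\pi_1(\bar{Y}) = H$ (the prism complex of $Y$ is CAT(0), hence simply connected), and the quotient $O := \Gamma\mathcal{G} \backslash \QM$ is the one-vertex prism complex with $\pi_1(O) = \Gamma\mathcal{G}$, into which $\bar{Y}$ immerses via a map $\iota : \bar{Y} \to O$.

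Next I analyze the combinatorics of $\mathfrak{g}$ relative to the hyperplanes of $\QM$. Each hyperplane either crosses $Y$ (is dual to an edge of $Y$) or does not; using Theorem \ref{thm:BigThmQM} (sectors and carriers are gated) together with the gatedness criterion of Lemma \ref{lem:GatedSub}, I show that $Y$ lies in a single sector of every non-crossing hyperplane, so that $\mathfrak{g}$ collapses every edge dual to such a hyperplane, while it sends edges dual to crossing hyperplanes to edges of $Y$. Combined with the description of cliques in Lemma \ref{lem:GPcliques}, this exhibits $\mathfrak{g}$ as a cellular collapse: each clique $gG_u$ of $\QM$ is projected either to a single vertex or onto a sub-clique of a clique of $Y$. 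In particular $\iota$ is a local isometry, so the inclusion $H \hookrightarrow \Gamma\mathcal{G}$ is $\pi_1$-injective.

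The main step is the construction of the finite-index subgroup and the retraction by a canonical completion. Starting from the local isometry $\iota : \bar{Y} \to O$, I close up, in each vertex-group direction $G_u$ occurring in $\bar{Y}$, the partial clique-pattern of $\bar{Y}$ to a full $G_u$-pattern, producing a covering $\widehat{\bar{Y}} \to O$; simultaneously I define a retraction $\rho : \widehat{\bar{Y}} \to \bar{Y}$ that collapses exactly the directions added in the completion, modelled on the cellular collapse $\mathfrak{g}$ of the previous step. Then $K := \pi_1(\widehat{\bar{Y}})$ is a subgroup of $\Gamma\mathcal{G}$ containing $H$, the induced map $\rho_* : K \to H$ satisfies $\rho_*|_H = \mathrm{id}_H$, and finiteness of the index $[\Gamma\mathcal{G} : K]$, which equals the degree of the covering, follows from compactness of $\bar{Y}$: only finitely many directions $G_u$ occur and each contributes only finitely many sheets (bounded by the finitely many vertices of $\bar{Y}$).

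I expect the crux to be verifying that the completion is a genuine finite-degree covering of $O$ onto which $\bar{Y}$ retracts. The delicate points are that the local clique-patterns of $\bar{Y}$ in each $G_u$-direction extend consistently around the cycles of $\bar{Y}$ --- this is exactly where the local convexity and triangle-containment of the gated subgraph $Y$ (Lemma \ref{lem:GatedSub}) are needed --- and that the resulting collapse $\rho$ is cellular with $\rho \circ \iota \simeq \mathrm{id}_{\bar{Y}}$, so that $\rho_*$ splits the inclusion. Equivalently, the position-dependence of which generators are collapsed by $\mathfrak{g}$ is precisely what the finite cover removes, and the hypothesis that $H$ has only finitely many orbits of vertices on $Y$ is what keeps the number of local patterns, and hence the index, finite.
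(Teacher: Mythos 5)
Your overall strategy (canonical completion and retraction \`a la Haglund--Wise, transported to $\QM$) is a reasonable instinct, but as written it has a genuine gap: the topological framework you set up fails for general graph products. Your key assertions that $\bar{Y} := H \backslash Y$ is a prism complex with $\pi_1(\bar{Y}) = H$ and that $O := \Gamma\mathcal{G} \backslash \QM$ is a one-vertex complex with $\pi_1(O) = \Gamma\mathcal{G}$ are false once the vertex groups have torsion, which is the typical case here (the paper's applications crucially use $\mathbb{Z}_2$ factors). The action of $\Gamma\mathcal{G}$ on $\QM$ is free on \emph{vertices}, but not on the prism complex: an element $g$ with $g^2 = 1$ inverts edges of the clique $G_u$, so the quotient is an orbi-complex rather than a complex, and its topological fundamental group is wrong. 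Concretely, for $\Gamma$ a single vertex with $G = \mathbb{Z}_2$, the quotient of $\QM$ (a single edge) is simply connected, not a space with $\pi_1 = \mathbb{Z}_2$; taking $H = G$ acting on $Y = \QM$ shows $\pi_1(\bar{Y}) \neq H$ as well. So the covering-space dictionary you rely on (subgroups of $\Gamma\mathcal{G}$ as covers of $O$, $K := \pi_1(\widehat{\bar{Y}})$, $\rho_*$ splitting the inclusion) does not exist in the stated form; repairing it would require a complexes-of-groups or groupoid version of canonical completion, and constructing that completion consistently (including in directions where $G_u$ is infinite, where your sheet-counting argument also needs care) is precisely the hard content that your proposal defers to a ``delicate point'' rather than proves. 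The parts of your argument that do go through --- the gate projection $\mathfrak{g}$ is $H$-equivariant, $Y$ lies in a single sector of each hyperplane it does not cross, and $\mathfrak{g}$ collapses cliques to sub-cliques or vertices --- are correct but are only the easy half.

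For comparison, the paper sidesteps all quotient-space issues by working equivariantly inside $\QM$ itself. It takes $\mathcal{J}$ to be the set of hyperplanes tangent to $Y$, forms the rotative stabilisers $\mathrm{stab}_\circlearrowright(J)$ (which act freely and transitively on the sectors of $J$, since the action of $\Gamma\mathcal{G}$ on $\QM$ is fully rotative), and proves a ping-pong lemma (Lemma \ref{lem:PingPong}) showing that $Y$ is a strict fundamental domain for $R := \langle \mathrm{stab}_\circlearrowright(J) \mid J \in \mathcal{J} \rangle$. Since $\mathcal{J}$ is $H$-invariant, $R$ is normalised by $H$, and $H \cap R = \{1\}$ because $H$ preserves $Y$; hence $H^+ := \langle R, H \rangle = R \rtimes H$, and the projection $H^+ \to H$ is the desired retraction. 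Finiteness of $[\Gamma\mathcal{G} : H^+]$ follows because $H^+$ acts on $\QM$ with finitely many orbits of vertices ($Y$ is a fundamental domain for $R$ and $H$ is cocompact on $Y$) and $\Gamma\mathcal{G}$ is simply transitive on vertices. This algebraic route is exactly the device that replaces your completion-and-retraction construction, and it is where you should look: your cellular collapse $\mathfrak{g}$ is, in effect, the orbit map for $R$, but the group $R$ and the ping-pong verification are the missing ingredients that turn it into a group-theoretic retraction.
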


\noindent
Before turning to the proof of our theorem, we need to introduce a few definitions. So let $X$ be a quasi-median graph and $G$ a group acting on it.
\begin{itemize}
	\item The \emph{rotative-stabiliser} of a hyperplane $J$ is $\mathrm{stab}_\circlearrowright(J):= \bigcap \{ \mathrm{stab}(C) \mid \text{$C \subset J$ clique} \}.$
	\item Given a $G$-invariant collection of hyperplanes $\mathcal{J}$, the action $G \curvearrowright X$ is \emph{$\mathcal{J}$-rotative} if, for every $J \in \mathcal{J}$, the action $\mathrm{stab}_\circlearrowright(J) \curvearrowright \mathscr{S}(J)$ is transitive and free.
	\item Given a vertex $x \in X$, a collection of hyperplanes $\mathcal{J}$ is \emph{$x$-peripheral} if there do not exist $J_1,J_2 \in \mathcal{J}$ such that $J_1$ separates $x$ from $J_2$.  
\end{itemize}
For instance, the action of $\Gamma \mathcal{G}$ on $\QM$ is \emph{fully rotative} \cite[Proposition 2.21]{AutGP}, i.e., it is $\mathcal{J}$-rotative where $\mathcal{J}$ denotes the collection of all the hyperplanes of $\QM$. 

\begin{lemma}\label{lem:PingPong}
Let $G$ be a group acting on a quasi-median graph $X$ with trivial vertex-stabilisers. Fix a basepoint $x_0 \in X$ and let $\mathcal{J}$ be an $x_0$-peripheral collection of hyperplanes. Assume that the action of $G$ on $X$ is $\mathcal{J}$-rotative. Then
$$Y:= \bigcap\limits_{J \in \mathcal{J}} \text{sector delimited by $J$ containing $x_0$}$$
is a fundamental domain for the action of $R:= \langle \mathrm{stab}_\circlearrowright (J) \mid J \in \mathcal{J} \rangle$ on $X$.
\end{lemma}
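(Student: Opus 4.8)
The claim is that $Y$ is a fundamental domain for the action of $R$ on $X$, which I interpret as the conjunction of two statements: (i) the $R$-translates of $Y$ cover $X$, and (ii) distinct $R$-translates of $Y$ share no vertex (equivalently, if $rY \cap Y$ contains a vertex for some $r \in R$, then $r = 1$). My strategy is a ping-pong argument organised around the generating rotative-stabilisers $\mathrm{stab}_\circlearrowright(J)$, $J \in \mathcal{J}$, exploiting the $x_0$-peripherality of $\mathcal{J}$ to control how these subgroups interact.

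\textbf{Covering.} For the covering statement, I would argue that every vertex $x \in X$ can be pushed into $Y$ by an element of $R$. Given $x$, consider the hyperplanes in $\mathcal{J}$ that separate $x$ from $x_0$; since $\mathcal{J}$ is $x_0$-peripheral, no hyperplane of $\mathcal{J}$ separates $x_0$ from another hyperplane of $\mathcal{J}$, so the carriers of the hyperplanes in $\mathcal{J}$ are, in a suitable sense, ``closest to $x_0$'' and directly accessible. For each such separating $J$, the $\mathcal{J}$-rotativity gives that $\mathrm{stab}_\circlearrowright(J)$ acts transitively on $\mathscr{S}(J)$, so there is an element of this subgroup moving $x$ to the sector of $J$ containing $x_0$ without disturbing the sectors of the other hyperplanes of $\mathcal{J}$ — this non-interference is exactly where peripherality is used, since an element of $\mathrm{stab}_\circlearrowright(J)$ fixes every clique dual to $J$ pointwise and hence acts on sectors of a hyperplane $J' \in \mathcal{J}$ only through its (trivial) effect outside the carrier of $J$. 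I would make this precise by induction on the number of hyperplanes of $\mathcal{J}$ separating $x$ from $x_0$, reducing it by one at each step.

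\textbf{Freeness / disjointness of translates.} For statement (ii), I would show that if $r \in R$ and $rY$ meets $Y$ in a vertex, then $r = 1$. Write $r$ as a word in the generators $\mathrm{stab}_\circlearrowright(J_k)$ and track, for each $J \in \mathcal{J}$, which sector of $J$ the vertex $r x_0$ lies in; the freeness half of $\mathcal{J}$-rotativity means the ``sector-coordinate'' of each generator is recorded faithfully. Since the generators attached to distinct hyperplanes act on $\mathscr{S}(J)$ independently (again by peripherality, a $\mathrm{stab}_\circlearrowright(J')$-element preserves the $x_0$-sector of $J$ when $J \neq J'$), the condition that $r$ preserves $Y$ forces each generator's sector-coordinate to be trivial, hence each generator to fix all sectors, hence to lie in a vertex-stabiliser-type situation; combined with the freeness of the $\mathrm{stab}_\circlearrowright(J) \curvearrowright \mathscr{S}(J)$ actions and the hypothesis that vertex-stabilisers are trivial, this collapses $r$ to the identity.

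\textbf{Main obstacle.} The delicate point will be making rigorous the ``independence'' of the rotative-stabilisers of distinct hyperplanes of $\mathcal{J}$ — that is, verifying that $\mathrm{stab}_\circlearrowright(J')$ genuinely fixes the $x_0$-sector of $J$ for $J \neq J'$ in $\mathcal{J}$. This is precisely the geometric content of $x_0$-peripherality, and I expect the clean way to establish it is to note that an element of $\mathrm{stab}_\circlearrowright(J')$ fixes $N(J')$ and every clique dual to $J'$ pointwise, and then use that $J$ does not separate $x_0$ from $J'$ (nor vice versa) together with the gatedness of sectors (Theorem \ref{thm:BigThmQM}) to conclude that the $x_0$-sector of $J$ is preserved. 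Once this independence is pinned down, both the covering and the freeness arguments become a bookkeeping of sector-coordinates, and the triviality of vertex-stabilisers supplies the final rigidity needed to kill any residual element.
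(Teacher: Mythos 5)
There is a genuine gap, and it sits exactly at the point you flag as the ``main obstacle'': the independence claim is false, and so is the fact you propose to derive it from. An element of $\mathrm{stab}_\circlearrowright(J')$ stabilises each clique dual to $J'$ only \emph{setwise}, not pointwise: the vertices of a dual clique lie in pairwise distinct sectors, one per sector, so an element fixing a dual clique pointwise would act trivially on $\mathscr{S}(J')$, contradicting the transitivity required by $\mathcal{J}$-rotativity (hyperplanes always delimit at least two sectors by Theorem \ref{thm:BigThmQM}). Worse, for distinct non-transverse $J, J' \in \mathcal{J}$, a non-trivial $g \in \mathrm{stab}_\circlearrowright(J')$ in general does not even stabilise $J$, so it does not act on $\mathscr{S}(J)$ at all, let alone preserve the $x_0$-sector. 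Concretely, take $\Gamma \mathcal{G} = A \ast B$ (two non-adjacent vertices), $x_0 = 1$, and $\mathcal{J} = \{J_A, J_B\}$ the hyperplanes dual to the cliques $A$ and $B$ containing $1$: this collection is $x_0$-peripheral and the action is $\mathcal{J}$-rotative, yet any $b \in B \setminus \{1\}$ sends $J_A$ to $bJ_A \neq J_A$. This example also breaks your induction measure for the covering step: for $x = ba$ with $a \in A$, $b \in B$ non-trivial, exactly one hyperplane of $\mathcal{J}$ (namely $J_B$) separates $x$ from $x_0$, and after applying $b^{-1}$ exactly one (now $J_A$) separates $b^{-1}x = a$ from $x_0$ --- the count does not decrease. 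And your freeness argument, which records ``sector-coordinates'' of the generators independently, would in effect present $R$ as a direct sum of the subgroups $\mathrm{stab}_\circlearrowright(J)$; in the example $R = A \ast B$ is a free product, so no bookkeeping of independent coordinates can detect non-triviality of a general word.

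What replaces independence is a genuine ping-pong \emph{nesting}, which is the heart of the paper's proof: setting $X_J :=$ the union of the sectors delimited by $J$ disjoint from $Y$, one has $g \cdot X_{J_2} \subset X_{J_1}$ for all distinct non-transverse $J_1, J_2 \in \mathcal{J}$ and every non-trivial $g \in \mathrm{stab}_\circlearrowright(J_1)$ --- translates of far sectors are pushed \emph{away}, not fixed, and $x_0$-peripherality is used precisely to set up this configuration --- while in the transverse case one has commutation of the rotative stabilisers and $g \cdot X_{J_2} \subset X_{J_2}$ \cite[Lemmas 8.46 and 8.47]{Qm}. The paper then invokes the ping-pong machinery of \cite[Proposition 8.44 and Fact 8.45]{Qm} to conclude that $r \cdot x \in \bigcup_{J \in \mathcal{J}} X_J$, hence $r \cdot x \notin Y$, for every non-trivial $r \in R$ and $x \in Y$. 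For the covering half, the paper also avoids any separating-hyperplane count: $Y$ is gated (an intersection of sectors, which are gated by Theorem \ref{thm:BigThmQM}); if $y$ is the gate of $x$ in $Y$, the last edge of a geodesic $[x,y]$ is dual to some $J \in \mathcal{J}$, and the rotative element $g$ moving $x$ into the $Y$-side of $J$ sends $[x,y]$ minus its last edge to a path from $gx$ to $y$, so $d(gx, Y) \leq d(x,Y) - 1$; one inducts on $d(x,Y)$, a measure that, unlike yours, strictly decreases. Your covering step could be repaired by switching to this measure, but the freeness step needs the nesting argument and cannot be salvaged from the independence picture.
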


\begin{proof}
Let $x \in X$ be an arbitrary vertex. Assume that $x \notin Y$ and let $y \in Y$ denote its projection onto $Y$. The last edge of a geodesic $[x,y]$ must be dual to a hyperplane $J$ in $\mathcal{J}$. Because the action is $\mathcal{J}$-rotative, there exists some $g \in \mathrm{stab}_\circlearrowright(J)$ which sends $x$ in the sector delimited by $J$ which contains $Y$. Notice that $g$ sends $[x,y]$ minus its last edge to a path between $gx$ and $y$, so
$$d(gx,Y) \leq d(gx,y) \leq d(x,y)-1.$$
By iterating the argument, we conclude that there exists $r \in R$ such that $rx \in Y$.

\medskip \noindent
Now, fix an arbitrary vertex $x \in Y$. For every $J \in \mathcal{J}$, let $X_J$ denote the union of all the sectors delimited by $J$ which are disjoint from $Y$. Notice that:
\begin{itemize}
	\item If $J_1,J_2 \in \mathcal{J}$ are transverse, then $g_1$ and $g_2$ commute for every $g_1 \in \mathrm{stab}_\circlearrowright(J_1)$ and $g_2 \in \mathrm{stab}_\circlearrowright(J_2)$ \cite[Lemma 8.46]{Qm}.
	\item If $J_1,J_2 \in \mathcal{J}$ are transverse, then $g \cdot X_{J_2} \subset X_{J_2}$ for every $g \in \mathrm{stab}_\circlearrowright(J_1)$ \cite[Lemma 8.47]{Qm}.
	\item If $J_1,J_2 \in \mathcal{J}$ are distinct and not transverse, then $g \cdot X_{J_2} \subset X_{J_1}$ for every $g \in \mathrm{stab}_\circlearrowright(J_1) \backslash \{1\}$.
	\item for every $J \in \mathcal{J}$ and every $g \in \mathrm{stab}_\circlearrowright(J) \backslash \{1\}$, we have $g \cdot x \in X_J$.
\end{itemize}
Therefore, \cite[Proposition 8.44]{Qm} applies and we deduce from \cite[Fact 8.45]{Qm} that $g \cdot x \in \bigcup\limits_{J \in \mathcal{J}} X_J$ for every non-trivial $g \in R$; in particular, $g \cdot x \notin Y$. Thus, we have proved that $Y$ is a fundamental domain for $R \curvearrowright X$. 
\end{proof}

\begin{proof}[Proof of Theorem \ref{thm:VirtualRetract}.]
Let $Y \subset \QM$ be a gated subgraph on which $H$ acts with finitely many orbits of vertices. Let $\mathcal{J}$ denote the collection of the hyperplanes of $\QM$ which are tangent to $Y$. We set $R:= \langle \mathrm{stab}_\circlearrowright(J) \mid  J \in \mathcal{J} \rangle$ and $H^+:= \langle R,H \rangle$. Notice that $\mathcal{J}$ is $H$-invariant, so $R$ is a normal subgroup of $H^+$. Moreover, $Y$ coincides with 
$$\bigcap\limits_{J \in \mathcal{J}} \text{sector delimited by $J$ containing $Y$},$$
which is a fundamental domain of $R$ according to Lemma \ref{lem:PingPong}. Therefore, $H \cap R= \{1\}$. It follows that $H^+ = R \rtimes H$, so that $H$ is a retract in $H^+$. Moreover, since $Y$ is a fundamental domain of $R$ and because $H$ acts on $Y$ with finitely many orbits of vertices, necessarily $H^+$ acts on $\QM$ with finitely many orbits of vertices, which means that $H^+$ is a finite-index subgroup of $\Gamma \mathcal{G}$. Thus, we have proved that $H$ is a virtual retract in $\Gamma \mathcal{G}$. 
\end{proof}

\noindent
According to Theorem \ref{thm:BigEmbedding}, if a group $G$ acts specially on a quasi-median graph $X$ then there exists an embedding $\varphi : G \hookrightarrow \Gamma \mathcal{G}$ such that $X$ embeds $\varphi$-equivariantly into $\QM$ as a gated subgraph. As a consequence, if $G$ acts on $X$ with finitely many vertices, then the image of $\varphi$ is a gated-cocompact subgroup of $\Gamma \mathcal{G}$, so Theorem \ref{thm:VirtualRetract} directly implies that:

\begin{cor}
Let $G$ be a group which acts specially on a quasi-median graph $X$ with finitely many orbits of vertices. The image of the embedding $G \hookrightarrow \Gamma \mathcal{G}$ provided by Theorem \ref{thm:BigEmbedding} is a virtual retract in $\Gamma \mathcal{G}$.
\end{cor}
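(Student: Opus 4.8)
The plan is to obtain this statement as a direct consequence of the two main results already proved, namely Theorem~\ref{thm:BigEmbedding} and Theorem~\ref{thm:VirtualRetract}; the only thing that genuinely needs checking is that the hypothesis ``finitely many orbits of vertices'' upgrades $\varphi(G)$ to a gated-cocompact subgroup of $\Gamma\mathcal{G}$, after which Theorem~\ref{thm:VirtualRetract} applies verbatim.

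First I would invoke Theorem~\ref{thm:BigEmbedding} to produce the injective morphism $\varphi : G \hookrightarrow \Gamma\mathcal{G}$ together with the $\varphi$-equivariant embedding $\Phi : X \hookrightarrow \QM$ whose image $Y := \Phi(X)$ is a gated subgraph. By Claim~\ref{claim:injective}, $\Phi$ is an isometric embedding, hence a bijection from the vertex set of $X$ onto the vertex set of $Y$; and by the equivariance relation $\Phi(gx) = \varphi(g)\Phi(x)$ established in Claim~\ref{claim:Equi}, the subgroup $\varphi(G)$ preserves $Y$ and acts on it, through $\Phi$, in precisely the same manner as $G$ acts on $X$.

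Next I would transfer the cocompactness hypothesis across this identification. Since $G$ acts on $X$ with finitely many orbits of vertices and $\Phi$ is a $\varphi$-equivariant bijection onto the vertices of $Y$, the group $\varphi(G)$ acts on the gated subgraph $Y$ with finitely many orbits of vertices as well. By definition, this says exactly that $\varphi(G)$ is gated-cocompact in $\Gamma\mathcal{G}$ (with respect to the action of $\Gamma\mathcal{G}$ on $\QM$). Applying Theorem~\ref{thm:VirtualRetract} to the gated-cocompact subgroup $H := \varphi(G)$ then yields that $\varphi(G)$ is a virtual retract in $\Gamma\mathcal{G}$, which is the desired conclusion.

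As this is a corollary of the two theorems, I do not expect a serious obstacle. The single point I would treat with care — and the only place where the standing hypothesis is consumed — is the orbit-transfer step: one must verify that the orbit count for $\varphi(G) \curvearrowright Y$ genuinely agrees with that for $G \curvearrowright X$, which follows immediately from the fact that $\Phi$ is a $\varphi$-equivariant bijection on vertices, so that $\Phi$ carries $G$-orbits bijectively to $\varphi(G)$-orbits.
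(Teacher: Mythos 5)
Your proposal is correct and follows exactly the paper's own route: Theorem~\ref{thm:BigEmbedding} provides the $\varphi$-equivariant embedding of $X$ onto a gated subgraph, the finitely-many-orbits hypothesis transfers across this identification to make $\varphi(G)$ gated-cocompact, and Theorem~\ref{thm:VirtualRetract} then applies directly. Your explicit verification of the orbit-transfer step is a harmless elaboration of what the paper leaves implicit.
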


\noindent
It also follows from Theorem \ref{thm:VirtualRetract} that gated-cocompact subgroups of our group $G$ are virtual retracts in $G$ itself.

\begin{cor}\label{cor:GatedRetract}
Let $G$ be a group which acts specially on a quasi-median graph $X$ with finitely many orbits of vertices. Gated-cocompact subgroups of $G$ are virtual retracts in~$G$. 
\end{cor}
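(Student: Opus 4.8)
The plan is to reduce the statement to Theorem \ref{thm:VirtualRetract} via the equivariant embedding furnished by Theorem \ref{thm:BigEmbedding}, and then to descend the resulting virtual retract from $\Gamma \mathcal{G}$ back down to $G$ by intersecting with $G$. Fix a gated-cocompact subgroup $H \leq G$, witnessed by a gated subgraph $Y \subset X$ on which $H$ acts with finitely many orbits of vertices. Applying Theorem \ref{thm:BigEmbedding} to the special action $G \curvearrowright X$, I obtain an injective morphism $\varphi : G \hookrightarrow \Gamma \mathcal{G}$ together with a $\varphi$-equivariant embedding $\Phi : X \hookrightarrow \QM$ whose image is gated and which, by Claim \ref{claim:injective}, is isometric. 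I identify $G$ with $\varphi(G) \leq \Gamma \mathcal{G}$ throughout.

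The first step is to transfer gated-cocompactness across $\Phi$. Since $\Phi$ is an isometric embedding onto the induced subgraph $\Phi(X)$, it carries $Y$, gated in $X$, to a subgraph $\Phi(Y)$ that is gated in $\Phi(X)$. As gatedness is transitive---if $A$ is gated in $B$ and $B$ is gated in $C$, then the gate in $A$ of a vertex of $C$ is obtained by projecting first to $B$ and then to $A$---and $\Phi(X)$ is gated in $\QM$, the subgraph $\Phi(Y)$ is gated in $\QM$. Because $\Phi$ is $\varphi$-equivariant, $\varphi(H)$ acts on $\Phi(Y)$ with finitely many orbits of vertices, so $\varphi(H)$ is a gated-cocompact subgroup of $\Gamma \mathcal{G}$.

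Theorem \ref{thm:VirtualRetract} now applies to $\varphi(H)$. Inspecting its proof, it produces a finite-index subgroup $H^+ = R \rtimes \varphi(H) \leq \Gamma \mathcal{G}$, where $R = \langle \mathrm{stab}_\circlearrowright(J) \mid J \in \mathcal{J} \rangle$ for $\mathcal{J}$ the collection of hyperplanes tangent to $\Phi(Y)$, together with a retraction $\rho : H^+ \to \varphi(H)$ with kernel $R$. The final step is the descent: set $G^+ := H^+ \cap \varphi(G)$. Then $G^+$ has finite index in $\varphi(G)$, since $[\varphi(G) : G^+] \leq [\Gamma \mathcal{G} : H^+] < \infty$, and $\varphi(H) \leq G^+$ because $\varphi(H)$ lies in both $\varphi(G)$ and $H^+$. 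Restricting $\rho$ to $G^+$ yields a homomorphism $G^+ \to \varphi(H)$ that is the identity on $\varphi(H)$, so $\varphi(H)$ is a retract of $G^+$. Transporting this back through the isomorphism $\varphi : G \xrightarrow{\sim} \varphi(G)$, the preimage of $G^+$ is a finite-index subgroup of $G$ containing $H$ and retracting onto $H$, which is precisely the assertion that $H$ is a virtual retract in $G$.

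The verifications that $\Phi$ is isometric and that gatedness is transitive are routine; the step requiring genuine care is the descent. One must confirm not only that intersecting the finite-index overgroup $H^+$ with $\varphi(G)$ preserves finite index, but that the restricted map $\rho|_{G^+}$ remains a \emph{retraction}: its image lies in $\varphi(H) \subseteq G^+$ and it fixes $\varphi(H)$ pointwise, so it realises $\varphi(H)$ as a retract inside $\varphi(G)$ rather than merely inside $\Gamma \mathcal{G}$. This localisation of the retraction to the ambient subgroup $\varphi(G)$ is the crux of deducing the intrinsic statement about $G$ from the statement about the graph product.
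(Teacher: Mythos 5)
Your proof is correct and follows exactly the paper's route: transfer gated-cocompactness of $H$ through the gated, $\varphi$-equivariant embedding of Theorem \ref{thm:BigEmbedding}, apply Theorem \ref{thm:VirtualRetract} to conclude $\varphi(H)$ is a virtual retract in $\Gamma\mathcal{G}$, and descend to $G$. The paper compresses the two steps you verify carefully (transitivity of gatedness, and intersecting the finite-index overgroup $H^+$ with $\varphi(G)$ and restricting the retraction) into single sentences, so your write-up simply makes explicit what the paper leaves implicit.
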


\begin{proof}
According to Theorem \ref{thm:BigEmbedding}, there exist a graph product $\Gamma \mathcal{G}$, an injective morphism $\varphi : G \hookrightarrow \Gamma \mathcal{G}$, and a $\varphi$-equivariant embedding $X \hookrightarrow \QM$ whose image is gated. As a consequence, any gated-cocompact subgroup $H$ of $G$ (with respect to its action on $X$) is a gated-cocompact subgroup of $\Gamma \mathcal{G}$ (with respect to its action on $\QM$). Therefore, $H$ is a virtual retract in $\Gamma \mathcal{G}$ according to Theorem \ref{thm:VirtualRetract}, which implies that $H$ is a virtual retract in $G$. 
\end{proof}

\noindent
As subgraphs in median graphs are gated if and only if they are convex, we recover from Corollary \ref{cor:GatedRetract} that convex-cocompact subgroups in cocompact special groups are virtual retracts \cite{MR2377497}.

\section{Right-angled graphs of groups}

\subsection{Graphs of groups}\label{section:IntroRAGG}

\noindent
We begin this section by fixing the basic definitions and notations related to graphs of groups; essentially, we follow \cite{MR1954121}. So far, our graphs were always one-dimensional simplicial complexes, but we need a different definition in order to define graphs of groups. In order to avoid ambiguity, we will refer to the latters as \emph{abstract graphs}. 

\begin{definition}
An \emph{abstract graph} is the data of a set of vertices $V$, a set of arrows $E$, a fixed-point-free involution $e \mapsto \bar{e}$ on $E$, and two maps $s,t : E \to E$ satisfying $t(e)= s \left( \bar{e} \right)$ for every $e \in E$. 
\end{definition}

\noindent
Notice that the elements of $E$ are referred to as arrows and not as edges. This terminology will allow us to avoid confusion between arrows of abstract graphs and edges of quasi-median graphs. Below, we define graphs of groups and their associated fundamental groupoids as introduced in \cite{Higgins}. 

\begin{definition}
A \emph{graph of groups}\index{Graphs of groups} $\mathfrak{G}$ is the data of an abstract graph $(V,E, \bar{\cdot}, s,t)$, a collection of groups indexed by $V \sqcup E$ such that $G_e= G_{\bar{e}}$ for every $e \in E$, and a monomorphism $\iota_e : G_e \hookrightarrow G_{s(e)}$ for every $e \in E$. The \emph{fundamental groupoid} $\mathfrak{F} = \mathfrak{F}(\mathfrak{G})$ of $\mathfrak{G}$ is the groupoid which has vertex-set $V$, which is generated by the arrows of $E$ together with $\bigsqcup\limits_{v \in V} G_v$, and which satisfies the relations:
\begin{itemize}
	\item for every $v \in V$ and $g,h,k \in G_v$, $gh=k$ if the equality holds in $G_v$;
	\item for every $e \in E$ and $g \in G_e$, $\iota_e(g) \cdot e = e \cdot \iota_{\bar{e}}(g)$. 
\end{itemize}
Notice in particular that, for every $e \in E$, $\bar{e}$ is an inverse of $e$ in $\mathfrak{F}$. Fixing some vertex $v \in V$, the \emph{fundamental group} of $\mathfrak{G}$ (based at $v$) is the vertex-group $\mathfrak{F}_v$ of $\mathfrak{F}$, i.e., the loops of $\mathfrak{F}$ based at $v$. 
\end{definition}

\noindent
We record the following definition for future use:

\begin{definition}
The \emph{terminus} of an element $g$ of $\mathfrak{F}$ is the vertex of $V$ which corresponds to the terminal point of $g$ when thought of as an arrow of $\mathfrak{F}$.
\end{definition}

\noindent
The following normal form, proved in \cite{Higgins}, is central in the quasi-median geometry of right-angled graphs of groups.

\begin{prop}\label{prop:GFnormalform}
Let $\mathfrak{G}$ be a graph of groups. For every $e \in E$, fix a left-transversal $T_e$ of $\iota_e \left( G_e \right)$ in $G_{s(e)}$ containing $1_{s(e)}$. Any element of $\mathfrak{F}$ can be written uniquely as a word $g_1 \cdot e_1 \cdots g_n \cdot e_n \cdot g_{n+1}$, where 
\begin{itemize}
	\item $(e_1,\ldots, e_n)$ is an oriented path in the underlying abstract graph;
	\item $g_i \in T_{e_i}$ for every $1 \leq i \leq n$, and $g_{n+1}$ is an arbitrary element of $G_{t(e_n)}$;
	\item if $e_{i+1} = \bar{e_i}$ for some $1 \leq i \leq n-1$ then $g_{i+1} \neq 1$.
\end{itemize}
\end{prop}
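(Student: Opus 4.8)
The statement is a normal-form theorem for the fundamental groupoid $\mathfrak{F}$, directly generalising Britton's lemma for HNN extensions and the normal-form theorem for amalgamated products. The plan is to prove it in two stages, existence and uniqueness, with uniqueness being where the real work lies.

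For existence, I would argue by a reduction procedure. Starting from any expression of an element of $\mathfrak{F}$ as a product of generators (vertex-group elements and arrows), the fact that $\mathfrak{F}$ is a groupoid forces such an expression to trace an oriented path in the underlying abstract graph, so after amalgamating consecutive group elements lying in a common vertex-group we obtain a word $g_1 \cdot e_1 \cdots g_n \cdot e_n \cdot g_{n+1}$ whose arrows form an oriented path. I would then reduce from left to right: writing each $g_i = t_i \cdot \iota_{e_i}(h_i)$ with $t_i \in T_{e_i}$ and $h_i \in G_{e_i}$, the relation $\iota_{e_i}(h_i) \cdot e_i = e_i \cdot \iota_{\bar{e}_i}(h_i)$ lets me replace $g_i$ by its transversal part $t_i$ while pushing $\iota_{\bar{e}_i}(h_i)$ across $e_i$ and amalgamating it into $g_{i+1}$. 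Finally, whenever $e_{i+1} = \bar{e}_i$ and the intervening group element has become trivial, the relation $e_i \cdot \bar{e}_i = \mathrm{id}$ removes a backtrack and shortens the word; iterating yields a word of the required form.

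For uniqueness, I would use van der Waerden's method: exhibit an action of $\mathfrak{F}$ on the set of normal-form words for which evaluation at an identity recovers the normal form, thereby showing that the form depends only on the group element. Concretely, for each vertex $v$ let $\Omega_v$ denote the set of normal forms with source $v$, and define, for every generator, a bijection between the relevant $\Omega$'s: a group element $g \in G_v$ acts on $\Omega_v$ by left-multiplying the leading group syllable and re-reducing (defined by induction on the length $n$, since the re-reduction may cascade down the word), and an arrow $e \colon s(e) \to t(e)$ acts from $\Omega_{t(e)}$ to $\Omega_{s(e)}$ by prepending $1 \cdot e$, except that when the word begins with $\bar{e}$ and a trivial group syllable it instead deletes that initial $\bar{e}$.

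The main obstacle is verifying that these partial bijections respect the defining relations of $\mathfrak{F}$, namely that each assignment $G_v \to \mathrm{Sym}(\Omega_v)$ is a homomorphism and that $\iota_e(g) \cdot e$ and $e \cdot \iota_{\bar{e}}(g)$ induce the same map $\Omega_{t(e)} \to \Omega_{s(e)}$ for every $g \in G_e$. This is a finite but delicate case analysis, the awkward points being the cascading nature of the re-reduction when a group element is pushed across several arrows, and the cancellation case of the arrow action, where one must check that the transversal membership and the backtrack condition are correctly restored. Once the relations are checked, the assignment extends to a genuine action of the groupoid $\mathfrak{F}$ on $\bigsqcup_{v} \Omega_v$; evaluating the action of an element $x \colon u \to w$ on the trivial normal form $1_w \in \Omega_w$ returns precisely any normal form representing $x$, which is verified by the same left-to-right bookkeeping as in the existence step, using the normal-form conditions to rule out cancellation at each stage. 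Since the resulting element of $\Omega_u$ depends only on $x$ and not on the chosen word, the normal form of $x$ is unique.
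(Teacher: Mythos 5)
Your proposal is correct, but there is nothing in the paper to compare it against line by line: the paper does not prove Proposition \ref{prop:GFnormalform} at all, it quotes the result from \cite{Higgins}, where it is obtained via Higgins's groupoid-theoretic treatment of graphs of groups. What you give instead is the classical self-contained route, Britton-style reduction for existence plus van der Waerden's trick for uniqueness, transplanted from HNN extensions and amalgams to the groupoid setting, and the architecture is sound: existence terminates because transversal extraction sweeps left to right without changing $n$ while each backtrack removal strictly decreases $n$, and uniqueness follows once the partial bijections are shown to satisfy the presentation's relations, since evaluating at the trivial normal form recovers the word. The citation buys the paper brevity; your argument buys self-containedness and makes explicit the reduction algorithm the paper implicitly relies on when it fixes canonical transversals for right-angled graphs of groups in Section \ref{section:IntroRAGG}. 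One observation is worth surfacing from your deferred case analysis because it is the hinge making everything work: since $1 \in T_e$ and distinct elements of $T_e$ lie in distinct left cosets of $\iota_e(G_e)$, an element of $T_e$ belongs to $\iota_e(G_e)$ if and only if it equals $1$. Consequently, when your group-element action pushes a residue $\iota_{\bar{e}_i}(h)$ across $e_i$ and amalgamates it into $g_{i+1}$, the product $\iota_{\bar{e}_i}(h)\, g_{i+1}$ lies in $\iota_{e_{i+1}}(G_{e_{i+1}})$ (in the relevant case $e_{i+1} = \bar{e}_i$, where the two subgroups coincide) if and only if $g_{i+1}$ does, i.e., if and only if $g_{i+1}=1$, which the third normal-form condition forbids; so the cascading re-reduction never triggers a backtrack, the action of a vertex-group element preserves $n$ and the arrow sequence, and the letter-by-letter evaluation of a normal word on $1_w$ reproduces that word exactly as you claim. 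A last bookkeeping point: as set up, your arrows act by maps $\Omega_{t(e)} \to \Omega_{s(e)}$, so you are really defining an action of the opposite groupoid (equivalently, a right action); this is harmless, but the homomorphism condition for $G_v \to \mathrm{Sym}(\Omega_v)$ and the compatibility check for $\iota_e(g)\cdot e = e \cdot \iota_{\bar{e}}(g)$ must be stated with a consistent variance, and the mutual inversity of the maps assigned to $e$ and $\bar{e}$ must be verified alongside the two listed families of relations, since the identification $\bar{e}=e^{-1}$ is built into the free groupoid rather than imposed as a relation.
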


\noindent
Such a word will be referred to as a \emph{normal word}. 

\medskip \noindent
Roughly speaking, we will be interested in graphs of groups gluing graph products. In order to get something interesting for our purpose, we need to control the gluings.

\begin{definition}
Given two graph products $\Gamma \mathcal{G}$ and $\Lambda \mathcal{H}$, a morphism $\Phi : \Gamma \mathcal{G} \to \Lambda \mathcal{H}$ is a \emph{graphical embedding} is there exist an embedding $f : \Gamma \to \Lambda$ and isomorphisms $\varphi_v : G_v \to H_{f(v)}$, $v \in V(\Gamma)$, such that $f(\Gamma)$ is an induced subgraph of $\Lambda$ and $\Phi(g)= \varphi_v(g)$ for every $v \in V(\Gamma)$ and every $g \in G_v$. 
\end{definition}

\noindent
Typically, we glue graph products along ``subgraph products'' in a canonical way. We refer to Section \ref{section:RAGGex} for examples.

\begin{definition}
A \emph{right-angled graph of groups} is a graph of groups such that each (vertex- and edge-)group has a fixed decomposition as a graph product and such that each monomorphism of an edge-group into a vertex-group is a graphical embedding (with respect to the structures of graph products we fixed). 
\end{definition}

\noindent
Fix a right-angled graph of groups $\mathfrak{G}$. For every arrow $e \in E$, there exists a natural left-transversal $T_e$ of $\iota_e(G_e)$ in $G_{s(e)}$: the set of graphically reduced words of $G_{s(e)}$ whose tails (see Definition \ref{def:headtail}) do not contain any element of the vertex-groups in $\iota_e(G_e)$. From now on, we fix this choice, and any normal word will refer to this convention.

\medskip \noindent
In the following, a \emph{factor} $G$ will refer to a vertex-group of one of these graph products. In order to avoid possible confusion, in the sequel vertex-groups will only refer to the groups labelling the vertices of the underlying abstract graph of $\mathfrak{G}$.


\subsection{Quasi-median geometry}\label{section:QMforRAGG}

\noindent
Fix a right-angled graph of groups $\mathfrak{G}$, and a vertex $\omega \in V$ of its underlying abstract graph. Let $\mathfrak{S} \subset \mathfrak{F}$ denote the union of the arrows of $E$ together with the factors (minus the identity) of the graph products $G_v$, $v \in V$. By definition, $\mathfrak{S}$ is a generating set of the fundamental groupoid $\mathfrak{F}$ of $\mathfrak{G}$. 

\begin{definition}
The graph $\mathfrak{X}= \mathfrak{X}(\mathfrak{G},\omega)$ is the connected component of the Cayley graph $\mathfrak{X}(\mathfrak{G})$ of the groupoid $\mathfrak{F}$, constructed from the generating set $\mathfrak{S}$, which contains the neutral element $1_{\omega}$ based at $\omega$. In other words, $\mathfrak{X}$ is the graph whose vertices are the arrows of $\mathfrak{F}$ starting from $\omega$ and whose edges link two elements $g,h \in \mathfrak{F}$ if $g=h \cdot s$ for some $s \in \mathfrak{S}$. 
\end{definition}

\noindent
It is worth noticing that an edge of $\mathfrak{X}$ is naturally labelled either by an arrow of $E$ or by a factor. 

\begin{prop}\label{prop:RAGGqm}\emph{\cite[Theorem 11.8]{Qm}}
The graph $\mathfrak{X}$ is quasi-median.
\end{prop}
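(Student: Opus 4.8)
\textbf{Proof proposal for Proposition \ref{prop:RAGGqm}.}

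The plan is to verify the abstract characterisation of quasi-median graphs directly on $\mathfrak{X}$, by exploiting the groupoid structure to reduce all the conditions to statements about the normal form of Proposition \ref{prop:GFnormalform}. First I would observe that $\mathfrak{X}$ is vertex-transitive in the groupoid sense: left-multiplication by any arrow $g \in \mathfrak{F}$ starting at $\omega$ permutes the vertices of $\mathfrak{X}$ and preserves edges, because edges are defined by right-multiplication by a generator $s \in \mathfrak{S}$. Consequently, to check the triangle and quadrangle conditions and the forbidden induced subgraphs, it suffices to examine configurations based at the neutral vertex $1_\omega$, which dramatically simplifies the combinatorics. The distance between $1_\omega$ and a vertex $g$ should be read off from the normal word $g_1 \cdot e_1 \cdots g_n \cdot e_n \cdot g_{n+1}$ representing $g$: each arrow $e_i$ contributes length one, and each transversal/vertex-group syllable $g_i$ contributes its graph-product length $|g_i|$ inside the relevant $G_v$. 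Establishing that this \emph{syllable length} genuinely computes the graph distance $d(1_\omega, g)$ in $\mathfrak{X}$ is the technical heart of the argument.

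Next I would handle the two forbidden induced subgraphs. An induced $K_4^-$ or $K_{3,2}$ based at $1_\omega$ would force three or four generators $s_1, s_2, s_3 \in \mathfrak{S}$ with prescribed adjacency relations among the products $s_i$, $s_i s_j^{-1}$, etc. Since any two generators either lie in the same factor (giving a triangle, because the factor $G_v$ embeds as a clique via the graph-product structure), lie in commuting adjacent factors of a common vertex-group (giving a square), or are incompatible, one checks that neither pathological pattern can occur as an induced subgraph. In particular triangles come exactly from triples in a single factor and squares from commuting pairs, and one verifies no two-square configuration can share two adjacent edges in the induced sense without closing up into a prism — this is where the graphical-embedding hypothesis on the edge-monomorphisms is used, ensuring the gluing does not create spurious short cycles.

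For the triangle and quadrangle conditions I would argue via the normal form. Given $a$, $x$, $y$ with $x \sim y$ and $d(a,x)=d(a,y)$, translate so that $a = 1_\omega$; the edge between $x$ and $y$ is labelled by some $s \in \mathfrak{S}$, and equidistance from $1_\omega$ means $x$ and $y$ differ in a single syllable lying in one factor $G_v$ (the arrow case being excluded since an arrow strictly changes syllable length). Then the common predecessor $z$ obtained by deleting that syllable, or replacing it by the product within $G_v$, is adjacent to both and one closer to $1_\omega$, giving the triangle condition. The quadrangle condition is similar but uses the commutation relations: if $z$ covers both $x$ and $y$ via syllables in two distinct factors, these factors must be adjacent in the defining graph (otherwise $z$ would not have length one more than each), so the two syllables commute and the diagonally opposite vertex $w$ exists with the correct distance. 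I expect the main obstacle to be the careful bookkeeping showing that syllable length equals graph distance — in particular that normal words are geodesics and that every geodesic is obtained from the normal word by shuffling commuting syllables and reordering across transversal choices, so that the ``differ in one syllable'' analysis above is exhaustive rather than merely sufficient. Once that distance formula is in place, the four conditions follow by the routine syllable manipulations sketched here.
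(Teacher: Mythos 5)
First, note that the paper itself does not prove this proposition: it is quoted from \cite[Theorem 11.8]{Qm}, so there is no internal proof to match your sketch against. Your plan of verifying the quasi-median axioms directly from Higgins' normal form (Proposition \ref{prop:GFnormalform}) is in the same spirit as the cited proof, but as written it has three genuine gaps. The first is your opening reduction: $\mathfrak{X}$ is \emph{not} vertex-transitive, and left-multiplication by an arrow $g \in \mathfrak{F}$ starting at $\omega$ does not act on $\mathfrak{X}$ at all --- if $g$ is an arrow from $\omega$ to $v$, then $g \cdot h$ is defined only for $h$ starting at $v$, not at $\omega$. Only the fundamental group $\mathfrak{F}_\omega$ acts on $\mathfrak{X}$, and by Lemma \ref{lem:OrbitsInX} its vertex-orbits are indexed by termini, so they are typically many. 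The reduction to the basepoint $1_\omega$ can be rescued, but only by using the groupoid isomorphism $x \mapsto a^{-1}x$ from $\mathfrak{X}(\mathfrak{G},\omega)$ to $\mathfrak{X}(\mathfrak{G}, t(a))$, which changes the base vertex $\omega$; since the statement is for arbitrary $\omega$ this is harmless, but it is a different argument from the transitivity you invoke and must be said correctly.

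The second gap is that your catalogue of triangles and squares is incomplete, and this is exactly where the argument would fail. Besides squares coming from two commuting factors inside a single vertex-group (i.e., inside a leaf), $\mathfrak{X}$ contains \emph{mixed} squares spanned by a factor-edge and an arrow-edge: whenever $a \in \iota_e(G_e)$, the relation $g a e = g e \varphi_e(a)$ produces the square on $g$, $ga$, $ge$, $gae$ --- these are the prisms of Lemma \ref{lem:RAGGprism} and the squares used repeatedly in Section \ref{section:WhenSpecial} (e.g.\ in the proof of Lemma \ref{lem:WhenHypSpecial}). Your verification of the quadrangle condition only treats the case where $z$ covers $x$ and $y$ via syllables in two factors, and your $K_{3,2}$ exclusion only contemplates leaf squares, so both are silent precisely on the mixed case; note that this is also where the graphical-embedding hypothesis does concrete work, rather than the vague role you assign it. Relatedly, your claim that an arrow-edge ``strictly changes syllable length'' (needed because arrow-edges lie in no triangle, by Lemma \ref{lem:RAGGclique}, so the triangle condition would be unverifiable for them otherwise) is asserted, not proved. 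Third, and most importantly, everything rests on the distance formula $d(1_\omega, g) = n + \sum_i |g_i|$ together with the statement that every geodesic arises from a normal word by shuffling --- you correctly flag this as the technical heart, but you give no argument for it, and without it your ``differ in a single syllable'' analysis is merely a heuristic. So the proposal is a reasonable plan pointed in the right direction, but at its current level of detail it is an outline of the proof in \cite{Qm}, not a proof.
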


\noindent
Notice that the fundamental group $\mathfrak{F}_{\omega}$ of $\mathfrak{G}$ based at $\omega$ naturally acts by isometries on $\mathfrak{X}$ by left-multiplication. Moreover:

\begin{lemma}\label{lem:OrbitsInX}
Two vertices of $\mathfrak{X}$ belong to the same $\mathfrak{F}_\omega$-orbit if and only if they have the same terminus.
\end{lemma}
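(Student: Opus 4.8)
The plan is to prove both implications of Lemma~\ref{lem:OrbitsInX} by unpacking what the terminus of a vertex of $\mathfrak{X}$ means and how $\mathfrak{F}_\omega$ acts.

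First I would recall the setup. A vertex of $\mathfrak{X}$ is an arrow $g$ of the groupoid $\mathfrak{F}$ starting from $\omega$; its terminus is the vertex $t(g) \in V$ at which $g$ ends. The group $\mathfrak{F}_\omega$ consists of the loops of $\mathfrak{F}$ based at $\omega$, and it acts by left-multiplication. The key observation is that left-multiplication interacts correctly with termini and composition: for $h \in \mathfrak{F}_\omega$ (a loop at $\omega$) and a vertex $g$ of $\mathfrak{X}$ (an arrow $\omega \to t(g)$), the product $hg$ is defined and is again an arrow starting from $\omega$, with terminus $t(hg) = t(g)$. Thus the action of $\mathfrak{F}_\omega$ preserves termini, which immediately gives the forward direction: if $g_2 = h g_1$ for some $h \in \mathfrak{F}_\omega$, then $t(g_2) = t(g_1)$.

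For the converse, suppose two vertices $g_1, g_2$ of $\mathfrak{X}$ satisfy $t(g_1) = t(g_2) =: v$. Each $g_i$ is an arrow $\omega \to v$ in the groupoid $\mathfrak{F}$. In a groupoid every arrow is invertible, so $g_1^{-1}$ is an arrow $v \to \omega$, and the composite $h := g_2 g_1^{-1}$ is a well-defined arrow $\omega \to \omega$, i.e.\ a loop based at $\omega$, so $h \in \mathfrak{F}_\omega$. By construction $h g_1 = g_2 g_1^{-1} g_1 = g_2$, exhibiting $g_1$ and $g_2$ in the same $\mathfrak{F}_\omega$-orbit. This completes the argument.

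I expect the statement to be essentially formal once the groupoid language is set up correctly; the main point requiring care is not a computation but making sure the composition $g_2 g_1^{-1}$ is legitimate in the groupoid. This is exactly where the groupoid framework (as opposed to a mere group) is used: arrows may only be composed when the terminus of one matches the source of the next, so one must verify that $g_1^{-1}$ runs $v \to \omega$ and that $g_2$ runs $\omega \to v$ share the matching endpoint $v$, which is precisely guaranteed by the hypothesis $t(g_1)=t(g_2)$. Once this compatibility is checked, both directions follow immediately, and there is no genuine obstacle beyond bookkeeping of sources and termini.
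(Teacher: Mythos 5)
Your proof is correct and is essentially identical to the paper's: the forward direction follows because left-multiplication by a loop at $\omega$ preserves the terminus, and the converse takes $h := g_2 g_1^{-1}$, which the common terminus makes a well-defined loop in $\mathfrak{F}_\omega$ sending $g_1$ to $g_2$. Your extra care about when groupoid composition is legitimate is exactly the one point the paper's shorter proof leaves implicit.
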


\begin{proof}
If $g \in \mathfrak{F}_\omega$ and $h \in \mathfrak{X}$, it is clear that $h$ and $gh$ have the same terminus. Conversely, if $h,k \in \mathfrak{X}$ have the same terminus, then the product $kh^{-1}$ is well-defined and it belongs to $\mathfrak{F}_\omega$. Since $kh^{-1} \cdot h =k$, it follows that $h$ and $k$ belong to the same $\mathfrak{F}_\omega$-orbit.
\end{proof}

\noindent
We record the following definition for future use:

\begin{definition}
A \emph{leaf} of $\mathfrak{X}$ is the subgraph generated by the set of vertices $gG_v$, where $G_v$ is a vertex-group of $\mathfrak{G}$ and where $g \in \mathfrak{F}$ is some arrow starting from $\omega$ and ending at $v \in V$.
\end{definition}

\noindent
Notice that, by construction, a leaf is isometric to the Cayley graph of a graph product as given by Theorem \ref{thm:GPquasimedian}. (See \cite[Lemma 11.11]{Qm} for more details.)

\paragraph{Path morphisms.} Let $\mathfrak{G}$ be a right-angled graph of groups and let $(V,E, \bar{\cdot},s,t)$ denote its underlying abstract graph. Given an arrow $e \in E$, we denote by $\varphi_e : \iota_e(G_e) \to \iota_{\bar{e}}(G_e)$ the isomorphism $\iota_{\bar{e}} \circ \iota_e^{-1}$. A priori, $\varphi_e$ is not defined on $G_{s(e)}$ entirely, but for every subset $S \subset G_{s(e)}$, we can define $\varphi_e(S)$ as $\varphi_e \left( S \cap \iota_e(G_e) \right)$. By extension, if an oriented path $\gamma$ decomposes as a concatenation of arrows $e_1 \cup \cdots \cup e_n$, we denote by $\varphi_\gamma$ the composition $\varphi_{e_n} \circ \cdots \circ \varphi_{e_1}$. 

\medskip \noindent
Notice that, if $G$ is a factor contained in a vertex-group $G_u$ of $\mathfrak{G}$ and if $\gamma$ is a path in the graph of $\mathfrak{G}$ starting from $u$, then $\varphi_\gamma(G)$ is either empty or a factor (different from $G$ in general). Moreover, in the latter case, the equality
$$a \cdot e_1 \cdots e_n = e_1 \cdots e_n \cdot \varphi_\gamma (a)$$
holds for every $a \in G$, where $e_1 \cup \cdots \cup e_n$ is a decomposition of $\gamma$ as a concatenation of arrows.

\medskip \noindent
Given a right-angled graph of groups, a subgroup of automorphisms is naturally associated to each factor:

\begin{definition}
For every factor $G$ contained in a vertex-group $G_u$ of $\mathfrak{G}$, 
$$\Phi(G):= \{ \varphi_c \mid \text{$c$ loop based at $u$ such that $\varphi_c(G)=G$} \} \leq \mathrm{Aut}(G).$$
\end{definition}

\noindent
These groups of automorphisms are crucial in the study of right-angled graphs of groups. Indeed, as noticed by \cite[Example 11.36]{Qm}, cyclic extensions of an arbitrary group are fundamental groups of right-angled graphs of groups, but we cannot expect to find a geometry common to all the cyclic extensions, so we need additional restrictions on the graphs of groups we look at. As suggested by \cite[Proposition 11.26]{Qm} and Proposition~\ref{prop:WhenSpecial}, typically we require the $\Phi(G)$ to be trivial, or at least finite.

\paragraph{Cliques and prisms.} Let $\mathfrak{G}$ be a right-angled graph of groups. The description of the cliques in our quasi-median graph $\mathfrak{X}$ is given by the following lemma. 

\begin{lemma}\label{lem:RAGGclique}\emph{\cite[Lemma 11.15]{Qm}}
A clique of $\mathfrak{X}$ is either by an edge labelled by an arrow or a complete subgraph $gG$ where $G$ is a factor and $g \in \mathfrak{F}$. 
\end{lemma}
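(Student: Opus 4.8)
The plan is to classify the edges of $\mathfrak{X}$ by their labels and reduce the statement to an understanding of triangles. Every edge of $\mathfrak{X}$ joins two elements $g,h\in\mathfrak{F}$ with $g=hs$ for some generator $s\in\mathfrak{S}$, and by definition of $\mathfrak{S}$ this $s$ is either an arrow of $E$ or a nontrivial element of a factor; I will call these \emph{arrow-edges} and \emph{factor-edges} respectively. Since every clique contains at least one edge, and each edge is of exactly one of these two types, it suffices to identify the maximal complete subgraph through an edge of each type. I expect the crux to lie in the arrow-edges.

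First I would prove the key claim: \emph{an arrow-edge lies in no triangle}. Suppose $g$ and $ge$ are joined by an arrow $e$, and suppose a third vertex $k$ is adjacent to both. Writing $k=g\,t_1=ge\,t_2$ with $t_1,t_2\in\mathfrak{S}$, the groupoid relation gives $e=t_1t_2^{-1}$ in $\mathfrak{F}$. I would then run through the four cases determined by whether $t_1,t_2$ are arrows or factor elements and compute the normal form of $t_1t_2^{-1}$ using Proposition \ref{prop:GFnormalform}: a product of two factor elements contains no arrow at all; a product of a factor element with an arrow (or the inverse of one) has a leading or trailing group syllable that cannot be absorbed, so its normal form carries an arrow together with a nontrivial factor syllable; and a product of two arrows either collapses to an identity loop (precisely when an $\iota_e$-type cancellation of the shape $t_2=\bar{t_1}$ occurs) or keeps both arrows. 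By uniqueness of the normal form, none of these can equal the single arrow $e$ except when $t_1$ or $t_2$ is trivial, which would force $k=g$ or $k=ge$; both are excluded since $t_1,t_2$ are genuine generators and $k$ must differ from the two endpoints. Hence no such $k$ exists, and each arrow-edge is already a clique.

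Granting this, every triangle of $\mathfrak{X}$ uses only factor-edges, so its three vertices share a common terminus $v$ and all lie in a single leaf $gG_v$. By construction the leaf is isometric to the Cayley graph of the graph product $G_v$, so by Lemma \ref{lem:GPcliques} its cliques are exactly the factor-cosets. Moreover, a complete subgraph of $\mathfrak{X}$ meeting a leaf in a factor-edge cannot leave that leaf, since escaping would require an arrow-edge, which by the key claim belongs to no triangle. Therefore the maximal complete subgraph through a factor-edge is the factor-coset $gG$; this subgraph is indeed complete, as any two distinct $ga,gb$ satisfy $ga=(gb)(b^{-1}a)$ with $b^{-1}a\in G\setminus\{1\}\subset\mathfrak{S}$.

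Assembling the two steps proves the lemma: a clique containing an arrow-edge is that single edge, and a clique containing a factor-edge is a factor-coset $gG$; since every edge is of one of these two types, these exhaust all the cliques of $\mathfrak{X}$. The only genuinely delicate point is the normal-form bookkeeping in the arrow-edge case, where one must keep track of the fixed left-transversals $T_e$ chosen for $\mathfrak{G}$ and of the possible cancellations between $e$ and an arrow occurring in $t_1$ or in $t_2^{-1}$.
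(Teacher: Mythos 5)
Your proof is correct and follows essentially the same route as the argument this paper cites from \cite[Lemma 11.15]{Qm}: the uniqueness of normal forms (Proposition \ref{prop:GFnormalform}) shows that an edge labelled by an arrow lies in no triangle, hence any clique containing a factor-edge is trapped in a leaf, where Lemma \ref{lem:GPcliques} identifies it with a coset $gG$. One inert slip in the two-arrow case of $e=t_1t_2^{-1}$: since $t_2^{-1}=\bar{t_2}$, the product collapses to an identity precisely when $t_2=t_1$ (not $t_2=\bar{t_1}$), but either way an identity cannot equal the arrow $e$, so the conclusion is unaffected.
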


\noindent
About the prisms of $\mathfrak{X}$, notice that we already understand the prisms which lie in leaves, as a consequence of Lemma \ref{lem:GPprisms}. The other prisms are described by our next lemma.

\begin{lemma}\label{lem:RAGGprism}\emph{\cite[Lemma 11.18]{Qm}}
For every prism $Q$ of $\mathfrak{X}$ which is not included in a leaf, there exist some $e \in E$ and some prism $P$ which is included into a leaf, such that $Q$ is generated by the set of vertices $\{ g, \ ge \mid g \in P \}$.
\end{lemma}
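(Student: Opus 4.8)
The plan is to exploit the definition of a prism as a product of cliques, together with the classification of cliques of $\mathfrak{X}$ provided by Lemma \ref{lem:RAGGclique}. Write $Q$ as a product $C_1 \times \cdots \times C_k$ of pairwise transverse cliques and fix a corner vertex $g_0 \in Q$, so that each $C_j$ may be taken to pass through $g_0$. By Lemma \ref{lem:RAGGclique}, each $C_j$ is either an edge labelled by an arrow of $E$ or a factor-clique $g_0 G^{(j)}$ with $G^{(j)}$ a factor. If all the $C_j$ were factor-cliques, then, since each factor-clique $g_0 G^{(j)}$ lies in the leaf $g_0 G_v$ (where $v$ is the terminus of $g_0$ and $G^{(j)} \leq G_v$), the whole of $Q$ would be contained in that single leaf, contrary to hypothesis. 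Hence at least one $C_j$, say $C_k$, is an arrow-edge $g_0 \to g_0 e$.

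The hard part will be to show that $C_k$ is the \emph{only} arrow-edge among the $C_j$; equivalently, that two hyperplanes both dual to arrow-edges are never transverse. The guiding idea is that the arrow-edges encode the Bass--Serre tree structure underlying $\mathfrak{G}$: collapsing each leaf of $\mathfrak{X}$ to a point makes the factor-edges disappear and sends arrow-edges to the edges of a tree, which contains no cycle, so a square with two \emph{adjacent} sides labelled by arrows cannot exist. At the level of the normal form of Proposition \ref{prop:GFnormalform} this should be made precise by identifying exactly which squares of $\mathfrak{X}$ can carry an arrow-edge: the only such squares arise from the groupoid relation $\iota_e(g) \cdot e = e \cdot \iota_{\bar e}(g)$, whose two arrow-sides are \emph{opposite} (hence parallel, dual to a common hyperplane) rather than adjacent. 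Establishing that distinct arrow-hyperplanes are therefore pairwise non-transverse is the technical core of the argument.

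Granting this, $C_k$ is the unique arrow-edge and $C_1, \ldots, C_{k-1}$ are factor-cliques $g_0 G^{(j)}$. Each $G^{(j)}$ must be a factor of the \emph{same} vertex-group $G_v$: two factors lying in distinct vertex-groups of $\mathfrak{G}$ do not even compose in the groupoid $\mathfrak{F}$, so pairwise transversality forces all the $G^{(j)}$ into $G_v$, with defining vertices spanning a complete subgraph $\Lambda$ of the graph defining $G_v$. By Lemma \ref{lem:GPprisms}, $P := C_1 \times \cdots \times C_{k-1} = g_0 \langle \Lambda \rangle$ is then a prism of the leaf $g_0 G_v$. Finally, transversality of each $C_j$ $(j<k)$ with the arrow-edge $C_k$, together with the relation $a \cdot e = e \cdot \varphi_e(a)$, guarantees that $ge$ is again a vertex of $\mathfrak{X}$ for every $g \in P$; running this over all of $P$, the product structure $Q = P \times C_k$ with $C_k = \{g_0, g_0 e\}$ means precisely that $Q$ is generated by the set of vertices $\{ g, \ ge \mid g \in P\}$, as claimed.
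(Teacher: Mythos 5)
This statement is quoted from \cite[Lemma 11.18]{Qm} and the paper gives no proof of it, so your proposal can only be judged on its own terms; its skeleton (decompose $Q$ as a product of cliques through a corner $g_0$, classify the factors via Lemma \ref{lem:RAGGclique}, rule out the all-factor case by the leaf hypothesis, show there is exactly one arrow-edge, and assemble $P$ via Lemma \ref{lem:GPprisms}) is the natural argument and is essentially correct. The one step you flag as ``the technical core'' --- that two hyperplanes of arrow-type are never transverse --- you do not actually establish: the Bass--Serre collapse heuristic, as stated, does not handle degenerate configurations (e.g.\ a square whose adjacent arrow-sides are labelled $e$ and $\bar{e}$ would map to a backtrack, not an embedded cycle, so ``trees have no cycles'' alone does not conclude), and your normal-form sketch asserts rather than proves that the only squares carrying arrow-edges come from the relation $\iota_e(g) \cdot e = e \cdot \iota_{\bar{e}}(g)$. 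Fortunately this gap is closed by citation: the paper explicitly records, in the paragraph on hyperplanes of Section \ref{section:QMforRAGG}, that two hyperplanes of arrow-type cannot be transverse, as a consequence of \cite[Fact 11.14]{Qm}. If you wanted a self-contained proof, the rigorous route is uniqueness of normal forms (Proposition \ref{prop:GFnormalform}): a common neighbour of $ge_1$ and $ge_2$ with $e_1, e_2$ arrows would yield an equality $e_1 x = e_2 y$ of normal words with $x,y$ generators, forcing $e_1 = e_2$ and collapsing the square.

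Two smaller points. First, when all cliques through $g_0$ are factor-cliques, your remark that the factors ``do not even compose'' is the right reason they all lie in the vertex-group $G_v$ at the terminus of $g_0$, but it is composability with $g_0$ (each clique containing $g_0$ has the form $g_0 G^{(j)}$ with $G^{(j)} \leq G_v$) that does the work, not composability of the factors with each other. Second, in the final step, transversality of each $C_j$ with the arrow-edge gives more than ``$ge$ is a vertex'' (which follows from terminus considerations alone): it gives $G^{(j)} \subseteq \iota_e(G_e)$, which is what makes the opposite face of $Q$ across the arrow-hyperplane equal to $Pe$ vertex-by-vertex via $a \cdot e = e \cdot \varphi_e(a)$. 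With the citation of \cite[Fact 11.14]{Qm} inserted, your argument is complete.
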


\paragraph{Hyperplanes.} Let $\mathfrak{G}$ be a right-angled graph of groups. The rest of the section is dedicated to the description of the hyperplanes of $\mathfrak{X}$. It is worth noticing that, as a consequence of \cite[Fact 11.14 and Lemma 11.16]{Qm}, a hyperplane has all its edges labelled either an arrow of $\mathfrak{G}$ or by factors (not a single one in general). In the former case, the hyperplane is \emph{of arrow-type}; and in the latter case, the hyperplane is \emph{of factor-type}. Notice that, as a consequence of \cite[Fact 11.14]{Qm}, two hyperplanes of arrow-type cannot be transverse.

\medskip \noindent
Roughly speaking, the carrier of the hyperplane dual to a clique labelled by some factor $G$ is generated by the vertices corresponding to elements of $\mathfrak{F}$ which ``commute'' with all the elements of $G$. Because commutation is not well-defined in groupoids, we need to define carefully this idea, which is done by the following definition.

\begin{definition}\label{def:LinkRAGG}
Let $G$ be a factor contained in a vertex-group $G_v$ of $\mathfrak{G}$. An element $h \in \mathfrak{F}$ belongs to the \emph{link} of $G$, denoted by $\mathrm{link}(G)$, if it can be written as a normal word $h_1e_1 \cdots h_ne_nh_{n+1}$ such that:
\begin{itemize}
	\item for every $1 \leq i \leq n-1$, $\varphi_{e_i} \left( \cdots \left( \varphi_{e_1} (G) \right) \cdots \right)$ is non-empty and included in $\iota_{e_{i+1}}(G_{e_{i+1}})$;
	\item for every $1 \leq i \leq n$, $h_i$ belongs to a factor adjacent the factor $\varphi_{e_{i-1}} \left( \cdots \left( \varphi_{e_{1}}(G) \right) \cdots \right)$ in the graph product $G_{s(e_i)}$;
	\item $h_{n+1}$ belongs to a factor adjacent to the factor $\varphi_{e_n} \left( \cdots \left( \varphi_{e_1}(G) \right) \cdots \right)$ in the graph product $G_{t(e_n)}$.
\end{itemize}
\end{definition}

\noindent
We are now ready to describe the hyperplanes of factor-type of $\mathfrak{X}$ and their stabilisers.

\begin{prop}\label{prop:RAGGhyp}\emph{\cite[Proposition 11.21]{Qm}}
Let $C=gG$ be a clique where $G$ is a factor and where $g \in \mathfrak{F}$. Let $J$ denote the hyperplane dual to $C$. An edge $e \subset \mathfrak{X}$ is dual to $J$ if and only if $e=g(h_1 \ell,h_2 \ell)$ for some $h_1,h_2 \in G$ distinct and $\ell \in \mathrm{link}(G)$. As a consequence, $N(J)= g G \cdot \mathrm{link}(G)$ and the fibers of $J$ are the $gh \cdot \mathrm{link}(G)$ where $h \in G$.
\end{prop}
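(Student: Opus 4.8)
The proposition describes, for a clique $C = gG$ dual to a hyperplane $J$, exactly which edges are dual to $J$, and deduces the carrier and the fibers. I need to prove three things: (i) the edge-characterisation $e = g(h_1\ell, h_2\ell)$ with $h_1 \neq h_2 \in G$ and $\ell \in \mathrm{link}(G)$; (ii) the carrier formula $N(J) = gG \cdot \mathrm{link}(G)$; and (iii) the description of the fibers as the cosets $gh \cdot \mathrm{link}(G)$, $h \in G$. Recall that a hyperplane is the transitive closure, under ``belonging to a common triangle or being opposite sides of a square'', of the relation on edges; so the heart of the matter is understanding how one propagates from the clique $C$ to all dual edges via squares and triangles.

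\begin{proof}[Proof sketch]
Because the hyperplane $J$ is the equivalence class of edges generated by the relations ``lie in a common triangle'' and ``are opposite sides of a square'', the plan is to show that the set $E_0$ of edges of the form $g(h_1\ell, h_2\ell)$, with $h_1 \neq h_2$ in $G$ and $\ell \in \mathrm{link}(G)$, is closed under both relations and contains $C$, and conversely that every such edge is reachable from $C$. First I would record that each edge in $E_0$ is indeed dual to a factor-type clique labelled by (a $\varphi$-translate of) $G$: by the path-morphism identity $a \cdot e_1 \cdots e_n = e_1 \cdots e_n \cdot \varphi_\gamma(a)$ from the ``path morphisms'' paragraph, right-multiplying $gh_1$ and $gh_2$ by the \emph{same} $\ell \in \mathrm{link}(G)$ produces two vertices differing by an element of $G$, hence joined by an edge lying in a clique $gG\ell$ of the type described by Lemma \ref{lem:RAGGclique}. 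This shows $E_0$ consists of genuine edges and that $C = g(G \cdot 1) \subset E_0$ since $1 \in \mathrm{link}(G)$.

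The key step is the \textbf{closure of $E_0$ under squares and triangles}, and conversely that squares and triangles never force us to leave $E_0$. For triangles this is immediate: three pairwise-adjacent vertices $gh_1\ell, gh_2\ell, gh_3\ell$ with $h_1,h_2,h_3 \in G$ distinct form a triangle inside the single clique $gG\ell$, so all three edges already lie in $E_0$. The substantive case is squares. Here I would use Lemma \ref{lem:RAGGprism} together with the prism/clique descriptions: a square containing an edge of $E_0$ either lies inside a leaf — in which case it is a square of the graph product $\QM$ and the control comes from the structure of the link, specifically the requirement in Definition \ref{def:LinkRAGG} that each syllable $h_i$ lie in a factor \emph{adjacent} to the appropriate $\varphi$-translate of $G$ — or it crosses an arrow $e$, in which case Lemma \ref{lem:RAGGprism} writes the square as generated by $\{p, pe : p \in P\}$ for a segment $P$ in a leaf, and transversality with an arrow-type edge is governed by whether $\varphi_e$ carries $G$ into $\iota_e(G_e)$, which is exactly the first bullet of Definition \ref{def:LinkRAGG}. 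In each case the opposite edge of the square has the form $g(h_1\ell', h_2\ell')$ for a new $\ell' \in \mathrm{link}(G)$ obtained from $\ell$ by appending a commuting syllable or an arrow satisfying the link conditions, so $E_0$ is preserved. The main obstacle I anticipate is a careful bookkeeping argument showing that these moves generate \emph{exactly} the cosets lying in $\mathrm{link}(G)$ and nothing more: that is, verifying that the three bullet conditions in Definition \ref{def:LinkRAGG} are simultaneously necessary (no square move ever produces a normal word violating them) and sufficient (every $\ell$ satisfying them is reached). This amounts to an induction on the syllable-length of $\ell$, peeling off the last arrow or factor-syllable and invoking the normal form of Proposition \ref{prop:GFnormalform} to keep the words reduced.

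Once the edge-characterisation is established, the two consequences follow formally. For the carrier, $N(J)$ is by definition the subgraph generated by all edges dual to $J$; since those edges are precisely the $g(h_1\ell, h_2\ell)$, their endpoints are exactly the vertices $gh\ell$ with $h \in G$ and $\ell \in \mathrm{link}(G)$, which is the vertex-set of $gG \cdot \mathrm{link}(G)$, giving $N(J) = gG \cdot \mathrm{link}(G)$. For the fibers, recall that a fiber (one clique-side of $J$) is a connected component of the carrier after deleting the interiors of the edges dual to $J$; deleting these edges breaks each clique $gG\ell$ while keeping each ``horizontal'' coset $gh \cdot \mathrm{link}(G)$ (same $h$, varying $\ell$) connected, and two such cosets with distinct $h_1, h_2 \in G$ are separated precisely by the deleted edges. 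Hence the fibers are the $gh \cdot \mathrm{link}(G)$, $h \in G$, and distinctness for distinct $h$ follows from the uniqueness in the normal form. The only point requiring a little care is that $\mathrm{link}(G)$ is itself connected and gated so that each $gh \cdot \mathrm{link}(G)$ is a single component; this should follow from Theorem \ref{thm:BigThmQM} applied to $J$, identifying each fiber with a sector.
\end{proof}
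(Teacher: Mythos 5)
First, note that the paper does not actually prove this proposition: it is imported verbatim from \cite[Proposition 11.21]{Qm}, so there is no in-paper proof to compare against, and your sketch has to be judged against the definitions alone. Your overall strategy is the right one, and almost certainly the natural route: take the candidate edge-set $E_0 = \{g(h_1\ell, h_2\ell) : h_1 \neq h_2 \in G,\ \ell \in \mathrm{link}(G)\}$, show it contains $C$, is closed under the two generating relations of the hyperplane (common triangle, opposite sides of a square), and is connected to $C$ through those relations, then read off the carrier and fibers. The triangle case and the two formal consequences are handled correctly.

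The genuine gap is that the decisive step is announced but never executed. You write that ``the main obstacle I anticipate is a careful bookkeeping argument\dots\ an induction on the syllable-length of $\ell$'' --- but that induction \emph{is} the proposition. Both directions of the edge-characterisation live there. For closure (necessity), you must classify every square containing an edge whose endpoints differ by an element of the factor $\varphi_\gamma(G)$: via Lemmas \ref{lem:GPprisms} and \ref{lem:RAGGprism} the opposite edge is $g(h_1\ell s, h_2\ell s)$ with $s$ either a syllable in an adjacent factor or an arrow $e$ with $\varphi_\gamma(G) \subseteq \iota_e(G_e)$, and you must then verify that $\ell s$ still satisfies the three bullets of Definition \ref{def:LinkRAGG} \emph{after renormalisation}: the syllable $s$ may amalgamate with or cancel against $h_{n+1}$, or equal $\bar{e}_n$ and undo the last arrow, and one needs Proposition \ref{prop:GFnormalform} (with the specific choice of transversals $T_e$) to check the resulting normal word has not left $\mathrm{link}(G)$. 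None of this is done. For reachability (sufficiency), the needed observation is that prefixes of normal words in $\mathrm{link}(G)$ again lie in $\mathrm{link}(G)$, so one can peel off the last syllable of $\ell$ and exhibit an explicit square (using commutation in the leaf, or the groupoid relation $\iota_e(x)\,e = e\,\iota_{\bar{e}}(x)$ across an arrow) joining $g(h_1\ell', h_2\ell')$ to $g(h_1\ell, h_2\ell)$; this also has to be stated and proved, since it is exactly what makes the link conditions sufficient and what gives the connectivity of $gh \cdot \mathrm{link}(G)$ that your fiber argument needs. On that last point, your appeal to Theorem \ref{thm:BigThmQM} is misdirected: that theorem says sectors and carriers are gated, which identifies each fiber as $N(J)$ intersected with a sector, but it does not by itself show $gh \cdot \mathrm{link}(G)$ is connected, nor that distinct $h$ give distinct components --- the latter needs the observation that the leading syllables of any $\ell \in \mathrm{link}(G)$ lie in factors adjacent to (hence distinct from) $G$, so the $G$-coordinate $h$ is recoverable from the normal form. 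In short: correct architecture, but the sketch defers precisely the combinatorial core, so as written it is not a proof.
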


\begin{cor}\label{cor:RAGGstabhyp}\emph{\cite[Corollary 11.22]{Qm}}
Let $C=gG$ be a clique where $G$ is a factor and where $g \in \mathfrak{F}$. Let $J$ denote the hyperplane dual to $C$. Then
$$\mathrm{stab}(J)=g \{ k h \mid k \in G, \ h \in \mathrm{link}(G), \varphi_h(G)=G \} g^{-1} .$$
\end{cor}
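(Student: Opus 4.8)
The plan is to deduce Corollary \ref{cor:RAGGstabhyp} directly from the explicit description of the carrier and the fibers of $J$ given in Proposition \ref{prop:RAGGhyp}. Recall that $\mathrm{stab}(J)$ consists of those $\gamma \in \mathfrak{F}_\omega$ which permute the edges dual to $J$, equivalently which stabilise the carrier $N(J)$ and preserve the partition of $N(J)$ into fibers. Since $C = gG$ with $N(J) = gG \cdot \mathrm{link}(G)$ and the fibers are the translates $gh \cdot \mathrm{link}(G)$, $h \in G$, I would first conjugate by $g$ so as to reduce to the case $g = 1_v$: an element stabilises $J$ if and only if it can be written $g \delta g^{-1}$ where $\delta$ stabilises the hyperplane $J_0$ dual to the clique $G$ itself. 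Thus it suffices to show $\mathrm{stab}(J_0) = \{ kh \mid k \in G,\ h \in \mathrm{link}(G),\ \varphi_h(G) = G \}$.

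The two inclusions are then treated separately. For the easier inclusion ($\supseteq$), I would take $k \in G$ and $h \in \mathrm{link}(G)$ with $\varphi_h(G) = G$, and check that $kh$ sends each fiber $h' \cdot \mathrm{link}(G)$ (for $h' \in G$) to another such fiber. The key computation uses the defining relation for path morphisms, namely that for $a$ in a factor $G$ and a path $\gamma = e_1 \cup \cdots \cup e_n$ underlying $h$, one has $a \cdot e_1 \cdots e_n = e_1 \cdots e_n \cdot \varphi_\gamma(a)$, together with the commutation built into the definition of $\mathrm{link}(G)$. The condition $\varphi_h(G) = G$ is exactly what guarantees that, after sliding $G$ past $h$, we land back in $G$ rather than in a different factor, so that the carrier is preserved. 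The left factor $k \in G$ then merely permutes the fibers among themselves.

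For the harder inclusion ($\subseteq$), I would start with an arbitrary $\delta \in \mathrm{stab}(J_0)$ and use the uniqueness of the normal form (Proposition \ref{prop:GFnormalform}) to extract its structure. Since $\delta$ preserves $N(J_0) = G \cdot \mathrm{link}(G)$ and permutes the fibers $\{ h \cdot \mathrm{link}(G) : h \in G \}$, I would write $\delta$ as a normal word and argue that, after factoring off the action on the clique $G$ (which accounts for the left factor $k \in G$), the remaining part $h$ must lie in $\mathrm{link}(G)$ and must satisfy $\varphi_h(G) = G$. Concretely, $\delta \cdot (\mathrm{link}(G))$ must be one of the fibers, forcing $\delta \in G \cdot \mathrm{link}(G)$; writing $\delta = kh$ with $k \in G$ and $h \in \mathrm{link}(G)$, the requirement that $\delta$ stabilise (rather than merely map into the carrier of some \emph{other} hyperplane) pins down $\varphi_h(G) = G$, since otherwise the clique $G$ would be carried to a clique labelled by a different factor and the dual hyperplane would change.

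The main obstacle I anticipate is the bookkeeping in the hard inclusion: verifying that an element preserving the carrier-as-a-set and the fiber partition must genuinely decompose as $kh$ with the stated constraints, rather than acting in some more complicated way that shuffles fibers while still preserving the union. This requires carefully invoking the normal form together with the precise description of $\mathrm{link}(G)$ in Definition \ref{def:LinkRAGG}, and checking that the path-morphism condition $\varphi_h(G) = G$ is both necessary and sufficient for $h$ to fix the hyperplane rather than translate it to a transverse or parallel one. Since much of this structural work is already packaged into Proposition \ref{prop:RAGGhyp}, I expect the corollary to follow fairly mechanically once the reduction to $g = 1_v$ is made and the two inclusions are set up, but the verification that no ``exotic'' stabilising elements exist is where the real content lies.
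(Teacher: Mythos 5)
The paper itself does not prove Corollary \ref{cor:RAGGstabhyp}: it is quoted from \cite[Corollary 11.22]{Qm}, so your proposal can only be measured against the natural derivation from Proposition \ref{prop:RAGGhyp}. Your skeleton (two inclusions, using the carrier/fiber description) is the right one, but the mechanism you invoke to pin down $\varphi_h(G)=G$ in the hard inclusion is wrong as stated. The action of $\mathfrak{F}_\omega$ is by left multiplication, which preserves the labels of edges: the image $\delta \cdot gG$ of the clique $C=gG$ is again a coset of $G$, hence a clique labelled by $G$, \emph{whatever} $\varphi_h$ does. So ``the clique $G$ would be carried to a clique labelled by a different factor'' never happens. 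Conversely, cliques dual to $J$ need not be labelled by $G$: combining Proposition \ref{prop:RAGGhyp} with the sliding relation $a\ell = \ell\,\varphi_\ell(a)$ ($a \in G$, $\ell \in \mathrm{link}(G)$), the cliques dual to $J$ are exactly the $g\ell\,\varphi_\ell(G)$ with $\ell \in \mathrm{link}(G)$, whose labels $\varphi_\ell(G)$ vary with $\ell$. Thus the dichotomy ``label changes, hence hyperplane changes'' fails in both directions, and your hard inclusion does not close as described. There is also a hidden debt in your easy inclusion: sending each fiber $h'\cdot \mathrm{link}(G)$ to another fiber requires the closure property $h\cdot\mathrm{link}(G) \subseteq \mathrm{link}(G)$ when $\varphi_h(G)=G$, which you never establish and which itself needs a normal-form argument.

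Both problems disappear if you argue at the level of cliques rather than fibers, with no reduction to $g=1$ needed (your conjugation step is harmless but changes the basepoint from $\omega$ to the terminus of $g$, a point worth flagging). Since a clique is dual to a unique hyperplane, $\delta \in \mathrm{stab}(J)$ if and only if $\delta \cdot gG$ is dual to $J$, i.e., if and only if $\delta g G = g\ell\,\varphi_\ell(G)$ for some $\ell \in \mathrm{link}(G)$, by the classification above. A coset of a factor determines that factor (if $x_1 G_1 = x_2 G_2$ with $G_1, G_2$ factors, then $G_1 = G_2$), so this equality forces $\varphi_\ell(G) = G$; then $G\ell = \ell\,\varphi_\ell(G) = \ell G$, whence $\delta g \in g \ell G = g G \ell$ and $\delta \in g\{k\ell \mid k \in G\}g^{-1}$ with $\varphi_\ell(G)=G$, which is exactly the asserted set. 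Read backwards, the same computation gives the easy inclusion: for $\delta = gkhg^{-1}$ with $\varphi_h(G)=G$, one gets $\delta \cdot gG = gkhG = gGh = gh\,\varphi_h(G)$, a clique dual to $J$, so $\delta J = J$; this checks a single clique against Proposition \ref{prop:RAGGhyp} and avoids the unproven closure lemma entirely. Your identification $\delta \in G\cdot\mathrm{link}(G)$ from the fiber containing $1$ is fine, but by itself it only locates $\delta$ in the carrier; the content of the corollary is precisely the step you justified incorrectly.
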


\noindent
In this statement, $\varphi_h$ is defined as follows. Writing $h$ as a normal word $h_1e_1 \dots h_ne_nh_{n+1}$ as in Definition \ref{def:LinkRAGG} (this representation being unique according to Proposition \ref{prop:GFnormalform}), we refer to $e_1 \cup \cdots \cup e_n$ as the \emph{path associated to $h$}. Then, $\varphi_h:= \varphi_{e_1 \cup \cdots \cup e_n}$. Notice that, by definition of $\mathrm{link}(G)$, $\varphi_h$ always sends $G$ to another factor; or, in other words, $\varphi_h(G)$ cannot be empty.

\medskip \noindent
About the hyperplanes of arrow-type of $\mathfrak{X}$, a complete description is not required here. The following statement will be sufficient:

\begin{lemma}\label{lem:RAGGhypE}\emph{\cite[Lemma 11.24]{Qm}}
Let $J$ be a hyperplane of arrow-type in $\mathfrak{X}$. Then $J$ has exactly two fibers, and they are both stabilised by $\mathrm{stab}(J)$. 
\end{lemma}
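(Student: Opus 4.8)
The plan is to describe the two fibers explicitly and then show that $\mathrm{stab}(J)$ cannot interchange them. First I would use Lemma \ref{lem:RAGGclique}: a clique carrying an arrow is a single edge, so every clique dual to $J$ has exactly two vertices. Since flipping a square preserves arrow-labels (the square of Lemma \ref{lem:RAGGprism} produced from a factor-edge $\{g,gh\}$ has its two arrow-sides $\{g,ge\}$ and $\{gh,ghe\}$ labelled by the same arrow) and no triangle contains an arrow-edge, all edges of $J$ are labelled by one fixed arrow $e \in E$. Orienting each dual edge in the $e$-direction, from its initial endpoint $g$ (of terminus $s(e)$) to its terminal endpoint $ge$ (of terminus $t(e)$), I set $F^-$ to be the set of all initial endpoints and $F^+$ the set of all terminal endpoints. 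Every dual edge then joins $F^-$ to $F^+$, while the remaining factor-edges of the squares above connect $g$ to $gh$ inside $F^-$ and $ge$ to $ghe$ inside $F^+$. Recalling that the fibers of $J$ are the connected components of $N(J)\backslash\backslash J$, and that a clique dual to $J$ meets each fiber in exactly one vertex, a dual clique being a single edge forces $J$ to have exactly two fibers, namely $F^-$ and $F^+$; this settles the first assertion.

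For the second assertion, the key observation is that left-multiplication preserves the chosen orientation: for $\gamma \in \mathfrak{F}_\omega$ one has $(\gamma g)e = \gamma(ge)$, so $\gamma$ carries the oriented edge $\{g,ge\}$ to the oriented edge $\{\gamma g, \gamma g e\}$, sending initial endpoints to initial endpoints. As any $\gamma \in \mathrm{stab}(J)$ permutes the edges dual to $J$, it follows that $\gamma F^- \subseteq F^-$ and $\gamma F^+ \subseteq F^+$; applying the same to $\gamma^{-1}$ gives $\gamma F^\pm = F^\pm$, so both fibers are stabilised.

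The delicate point — and the step I expect to be the main obstacle — is that this argument is only meaningful once $F^-$ and $F^+$ are known to be \emph{disjoint}; otherwise preserving the orientation says nothing. When $s(e)\neq t(e)$ this is immediate, as the two fibers are separated by the terminus, which the action preserves (cf. Lemma \ref{lem:OrbitsInX}). The subtle case is when $e$ is a loop of the underlying abstract graph, i.e. $s(e)=t(e)$, so that both fibers consist of vertices of the same terminus. Here I would establish $F^-\cap F^+=\emptyset$ directly: a vertex $g$ lying in both would be simultaneously the initial endpoint of a dual edge $\{g,ge\}$ and the terminal endpoint of a dual edge $\{g\bar e,g\}$, and since $e\neq\bar e$ these are two distinct cliques dual to $J$ sharing the vertex $g$, contradicting Lemma \ref{lem:DisjointCliques}. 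With disjointness secured, the orientation argument applies uniformly and completes the proof.
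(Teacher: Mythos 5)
Your proof is correct, but a direct comparison is impossible here: the paper does not prove this lemma, it imports it as \cite[Lemma 11.24]{Qm}. Judged on its own merits, your argument is a sound and essentially self-contained derivation from the statements the paper does quote, which is arguably a bonus. The two pillars both hold: first, by Lemma \ref{lem:RAGGclique} an arrow-labelled edge is a maximal clique, so cliques dual to $J$ have exactly two vertices, no triangle can contain an arrow-edge, and by Lemma \ref{lem:RAGGprism} the square relation propagates the arrow label together with its orientation; note that the $e$-orientation of a dual edge $\{g,ge\}$ is unambiguous because $ge \neq g\bar{e}$, i.e.\ $e \neq \bar{e}$ in $\mathfrak{F}$, which follows from the fixed-point-freeness of the involution together with uniqueness of normal forms (Proposition \ref{prop:GFnormalform}). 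Second, your disjointness argument via Lemma \ref{lem:DisjointCliques} is exactly the right device for the loop case $s(e)=t(e)$, which is indeed the genuinely delicate configuration: the lemma is stated for arbitrary right-angled graphs of groups, where condition $(iii)$ of Proposition \ref{prop:WhenSpecial} may fail, so the terminus argument alone does not suffice. Two points should be made explicit, though. You invoke without reference the fact that a clique dual to $J$ meets each fiber in exactly one vertex (equivalently, that the sectors are in bijection with the vertices of a dual clique); this is background from \cite[Section 2.2]{Qm} rather than anything proved in the present paper, and it is doing real work in your fiber count, so cite it. And the identification $\{F^-,F^+\} = \{\text{the two fibers}\}$ deserves one more sentence than you give it: connecting any two dual edges by a chain of squares and using that the factor-sides of each square join initial points to initial points (and terminal to terminal) shows that $F^-$ lies in a single fiber, and likewise $F^+$; combined with disjointness and the count of exactly two fibers, this yields the identification, and only then does your orientation-preservation argument for $\mathrm{stab}(J)$ conclude. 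Neither point is a gap in substance, only in write-up.
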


\subsection{When is the action special?}\label{section:WhenSpecial}

\noindent
In this section, we want to understand when the action of the fundamental group of a right-angled graph of groups on the quasi-median graph constructed in Section \ref{section:QMforRAGG} is special. Our main result in this direction is the following statement.

\begin{prop}\label{prop:WhenSpecial}
Let $\mathfrak{G}$ be a right-angled graph of groups. The action of the fundamental group $\mathfrak{F}_\omega$ of $\mathfrak{G}$ on $\mathfrak{X}(\mathfrak{G},\omega)$ is special if and only if the following conditions are satisfied: 
\begin{itemize}
	\item[(i)] for every factor $G$ and every cycle $c$ in the graph of $\mathfrak{G}$ based at the vertex-group containing $G$, $\varphi_c(G) = \emptyset$ or $G$;
	\item[(ii)] there does not exist two vertices $u,v$ in the graph of $\mathfrak{G}$, two paths $\alpha,\beta$ from $u$ to $v$, two commuting factors $A_1,A_2 \subset G_u$ and two non-commuting factors $B_1,B_2 \subset G_v$ such that $\varphi_\alpha(A_1)=B_1$ and $\varphi_\beta(A_2)= B_2$;
	\item[(iii)] in the graph of $\mathfrak{G}$, an edge have distinct endpoints;
	\item[(iv)] for every factor $G$, the equality $\Phi(G)= \{ \mathrm{Id} \}$ holds. 
\end{itemize}
\end{prop}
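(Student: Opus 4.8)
The plan is to prove Proposition~\ref{prop:WhenSpecial} by characterising specialness through the three defining conditions (no self-transverse/tangent hyperplanes, no transverse-meets-tangent configuration, and freeness of $\mathfrak{S}(J) \curvearrowright \mathscr{S}(J)$), translating each one into a combinatorial statement about the path morphisms $\varphi_\gamma$ and the link structure described in Section~\ref{section:QMforRAGG}. The key dictionary is provided by Proposition~\ref{prop:RAGGhyp} and Corollary~\ref{cor:RAGGstabhyp}, which express carriers, fibers, and hyperplane-stabilisers in terms of $G$, $\mathrm{link}(G)$, and the maps $\varphi_h$. Since by Lemma~\ref{lem:RAGGhypE} arrow-type hyperplanes always have exactly two fibers both preserved by the stabiliser, the delicate analysis concentrates on factor-type hyperplanes, and condition~(iii) will be what rules out the pathological self-tangency of arrow-type hyperplanes whose two endpoints coincide.

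I would organise the argument as a collection of equivalences, proving each implication by exhibiting or forbidding the relevant geometric configuration. First I would show that the freeness of $\mathfrak{S}(J)\curvearrowright\mathscr{S}(J)$ for factor-type $J$ is governed by condition~(iv): an element of $\mathrm{stab}(J)$ fixing every sector corresponds, via Corollary~\ref{cor:RAGGstabhyp}, to a loop $h$ with $\varphi_h(G)=G$ acting trivially on the fibers $gh\cdot\mathrm{link}(G)$, and such a nontrivial action is detected precisely by a nontrivial element of $\Phi(G)$. Next, the condition that no hyperplane $J$ is tangent or transverse to a translate $gJ$ with $g\in\mathfrak{F}_\omega$ should correspond to condition~(i): a cycle $c$ with $\varphi_c(G)$ equal to a \emph{different} factor $\varphi_c(G)\neq G$ (but nonempty) produces two $\mathfrak{F}_\omega$-translates of the same hyperplane-orbit sharing a carrier, hence tangent or transverse. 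Finally the transverse-meets-tangent prohibition, which forbids a transverse pair $J_1,J_2$ from having $J_1$ tangent to some translate $gJ_2$, unwinds into condition~(ii): transversality of two factors in $G_v$ records commutation, tangency records a shared but non-commuting configuration, and the two paths $\alpha,\beta$ transporting commuting factors $A_1,A_2$ to non-commuting $B_1,B_2$ produce exactly such a forbidden pair.

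\emph{The hard part} will be the careful bookkeeping in both directions of the transverse-versus-tangent analysis for factor-type hyperplanes, that is, the equivalence driven by condition~(ii). Determining when two factor-type hyperplanes are transverse requires, via Lemma~\ref{lem:SpanSquare} and the prism description in Lemma~\ref{lem:GPprisms} and Lemma~\ref{lem:RAGGprism}, understanding exactly when two cliques $g_1G_1$ and $g_2G_2$ span a prism, which amounts to the underlying factors being adjacent after transport along the connecting path; whereas tangency requires their carriers $g_iG_i\cdot\mathrm{link}(G_i)$ to meet without spanning a prism. Extracting the combinatorial data $(u,v,\alpha,\beta,A_1,A_2,B_1,B_2)$ from a genuine geometric tangency, and conversely building an explicit tangent pair of translates from such data, is where the normal-form bookkeeping of Proposition~\ref{prop:GFnormalform} and the link conditions of Definition~\ref{def:LinkRAGG} must be handled with the most care. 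I expect this to be the technical heart of the proof, while conditions~(i), (iii), and~(iv) fall out comparatively quickly once the dictionary between hyperplane configurations and path morphisms is set up.
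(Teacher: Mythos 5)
Your overall architecture matches the paper's: the paper proves the proposition by combining a lemma showing that, for a factor-type hyperplane $J$ dual to a clique $gG$, the action $\mathfrak{S}(J)\curvearrowright\mathscr{S}(J)$ is free if and only if $\Phi(G)=\{\mathrm{Id}\}$ (your condition~(iv) step, argued exactly as you propose via Proposition~\ref{prop:RAGGhyp} and Corollary~\ref{cor:RAGGstabhyp}), with Lemma~\ref{lem:RAGGhypE} disposing of arrow-type hyperplanes, and a separate lemma identifying hyperplane-specialness with conditions (i)--(iii). The transport mechanism you gesture at is made precise in the paper as a standalone lemma: if two edges labelled by factors $A$ and $B$ are dual to the same hyperplane, there is a path $\gamma$ in the graph of $\mathfrak{G}$ with $\varphi_\gamma(A)=B$; this, rather than a direct carrier-intersection analysis of $gG\cdot\mathrm{link}(G)$, is what drives both directions, and the converse implications are handled by explicit square-stacking constructions (translating an edge $(h,hg)$ along a cycle $e_1\cdots e_n$ to exhibit tangent or transverse translates) rather than by normal-form bookkeeping.

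The genuine gap is in your dictionary for the second hyperplane-specialness condition. You assign the transverse-meets-tangent prohibition entirely to condition~(ii), but that correspondence only holds when \emph{both} hyperplanes are of factor-type. The mixed configurations --- a tangent pair $J_1,J_2$ with $J_2$ of arrow-type, or with $J_1$ of arrow-type and $J_2$ of factor-type, such that $J_1$ and $gJ_2$ are transverse --- produce no data of the form $(u,v,\alpha,\beta,A_1,A_2,B_1,B_2)$, so condition~(ii) cannot rule them out. In the paper these cases are resolved differently: the arrow-type-$J_2$ case is pulled back to a common basepoint using the orbit characterisation (two vertices lie in the same $\mathfrak{F}_\omega$-orbit if and only if they have the same terminus) and reduced to the self-translate analysis, while the case where $J_1$ is of arrow-type and $J_2$ of factor-type yields, after a careful choice of edges on the correct side of $J_1$, a loop $c$ with $\varphi_c(A)=B\neq A$, i.e., a violation of condition~(i) --- the key observation being that $A$ lies outside the image of $\iota_e$ while $B$ lies inside it, forcing $A\neq B$. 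Without these two cases your backward implication (conditions (i)--(iv) imply specialness) is not established, since a purported special action could still fail on a mixed tangent-transverse configuration that none of your three clean equivalences addresses. Your plan for the both-factor-type case and for conditions (i), (iii), (iv) is otherwise sound and consistent with the paper's proof.
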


\noindent
We begin by proving the following preliminary lemma:

\begin{lemma}\label{lem:RACGsJ}
Let $\mathfrak{G}$ be a right-angled graph of groups. Let $G$ be a factor of $\mathfrak{G}$ and let $C$ denote a clique labelled by $G$, say $C=gG$. Also, let $J$ denote the hyperplane containing $C$. The action $\mathfrak{S}(J) \curvearrowright \mathscr{S}(J)$ is free and transitive if and only if $\Phi(G)= \{ \mathrm{Id}\}$. Moreover, if this is the case, then the image of $\mathrm{stab}(C) = gGg^{-1}$ in $\mathfrak{S}(J)$ is faithful and surjective.
\end{lemma}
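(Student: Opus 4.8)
The plan is to use the explicit description of $\mathrm{stab}(J)$ from Corollary \ref{cor:RAGGstabhyp} together with the fiber decomposition of $N(J)$ from Proposition \ref{prop:RAGGhyp} to compute the permutation of $\mathscr{S}(J)$ induced by each element of $\mathrm{stab}(J)$, and then read off freeness and transitivity directly. First I would set up a convenient identification of $\mathscr{S}(J)$ with $G$. By Proposition \ref{prop:RAGGhyp} the fibers of $J$ are the sets $gh\cdot\mathrm{link}(G)$ with $h\in G$; each such fiber is connected, avoids the edges dual to $J$, and contains the clique-vertex $gh$, so it lies in a single sector. Since the number of sectors equals the number of vertices of the dual clique $C=gG$, namely $|G|$, the map sending $h$ to the sector $S_h$ containing $gh$ is a bijection $G\to\mathscr{S}(J)$, and I will use it to view $\mathfrak{S}(J)$ as a group of permutations of $G$.

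The key computation is the action of a generic element of $\mathrm{stab}(J)$. By Corollary \ref{cor:RAGGstabhyp} every element of $\mathrm{stab}(J)$ has the form $\sigma=gk\ell g^{-1}$ with $k\in G$, $\ell\in\mathrm{link}(G)$ and $\varphi_\ell(G)=G$; moreover the path associated to such an $\ell$ is a loop at the vertex carrying $G$, so $\varphi_\ell\in\Phi(G)$, and conversely every element of $\Phi(G)$ is realised by some such $\ell$ (taking the pure path with trivial vertex-parts). Using the link conditions of Definition \ref{def:LinkRAGG} and the commutation rule $a\cdot e_1\cdots e_n=e_1\cdots e_n\cdot\varphi_\gamma(a)$, I would first establish the identity $\ell\cdot h'=\varphi_\ell^{-1}(h')\cdot\ell$ for every $h'\in G$; this is exactly where $\varphi_\ell(G)=G$ enters, guaranteeing that $\varphi_\ell$ is an automorphism of $G$ and hence invertible. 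Then
$$\sigma\cdot gh'=gk\ell h'=g\bigl(k\varphi_\ell^{-1}(h')\bigr)\ell,$$
and since $k\varphi_\ell^{-1}(h')\in G$ and $\ell\in\mathrm{link}(G)$, this vertex lies in the fiber over $g\bigl(k\varphi_\ell^{-1}(h')\bigr)$, hence in the sector $S_{k\varphi_\ell^{-1}(h')}$. Thus $\sigma$ induces the permutation $h'\mapsto k\varphi_\ell^{-1}(h')$ of $G$. I expect this step---correctly pushing $h'$ through $\ell$ via the groupoid relations and tracking the resulting sector---to be the main obstacle, as it is the only place where the precise shape of link elements and the commutation relations genuinely intervene.

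With the permutation formula in hand the conclusions are immediate. Taking $\ell=1$ shows that $gGg^{-1}$ already induces all left translations $h'\mapsto kh'$, so $\mathfrak{S}(J)\curvearrowright\mathscr{S}(J)$ is \emph{always} transitive. For freeness, note that $h'\mapsto k\varphi_\ell^{-1}(h')$ is the identity permutation exactly when $k=1$ and $\varphi_\ell=\mathrm{Id}$, whereas it fixes $h'=1$ whenever $k=1$ (since an automorphism fixes the identity). Hence if $\Phi(G)\neq\{\mathrm{Id}\}$ one may choose $\ell$ with $\varphi_\ell\neq\mathrm{Id}$ and set $k=1$ to obtain a non-trivial element of $\mathfrak{S}(J)$ admitting a fixed point, so the action is not free; conversely, if $\Phi(G)=\{\mathrm{Id}\}$ then every induced permutation reduces to a left translation $h'\mapsto kh'$, and these act freely. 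This yields the equivalence "free and transitive $\iff\Phi(G)=\{\mathrm{Id}\}$".

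Finally, under the assumption $\Phi(G)=\{\mathrm{Id}\}$, the image of $\mathrm{stab}(C)=gGg^{-1}$ in $\mathfrak{S}(J)$ consists precisely of the left translations $h'\mapsto kh'$ for $k\in G$, which by the previous paragraph is all of $\mathfrak{S}(J)$; this gives surjectivity. For faithfulness, $gkg^{-1}$ induces the trivial permutation iff left translation by $k$ is trivial iff $k=1$, so the homomorphism $gGg^{-1}\to\mathfrak{S}(J)$ has trivial kernel. This completes the proof.
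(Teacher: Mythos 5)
Your proof is correct and follows essentially the same route as the paper's: both arguments rest on Proposition \ref{prop:RAGGhyp} (identifying $\mathscr{S}(J)$ with $G$ through the fibers $gh \cdot \mathrm{link}(G)$, on which $\mathrm{stab}(C)=gGg^{-1}$ visibly acts freely and transitively) together with Corollary \ref{cor:RAGGstabhyp} (the shape $gk\ell g^{-1}$, $k \in G$, $\ell \in \mathrm{link}(G)$ with $\varphi_\ell(G)=G$, of elements of $\mathrm{stab}(J)$). The only difference is presentational: the paper shortcuts via the abstract observation that, since the image of $gGg^{-1}$ already acts freely and transitively, freeness of $\mathfrak{S}(J) \curvearrowright \mathscr{S}(J)$ is equivalent to surjectivity of that image, whereas you make explicit the commutation identity $\ell h' = \varphi_\ell^{-1}(h')\,\ell$ and the resulting permutation formula $h' \mapsto k\varphi_\ell^{-1}(h')$ --- which is precisely the computation the paper leaves implicit in its ``this amounts to saying'' step.
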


\begin{proof}
As a consequence of Proposition \ref{prop:RAGGhyp}, it is clear that $\mathrm{stab}(C)=gGg^{-1}$ acts faithfully, freely and transitively on $\mathscr{S}(J)$. Therefore, the action $\mathfrak{S}(J) \curvearrowright \mathscr{S}(J)$ is free and transitive if and only if the image of $\mathrm{stab}(C) = gGg^{-1}$ in $\mathfrak{S}(J)$ is surjective. According to Corollary \ref{cor:RAGGstabhyp}, this amounts to saying that $g\{ m \in \mathrm{link}(G) \mid \varphi_m(G)=G \}g^{-1}$ acts trivially on $\mathscr{S}(J)$, or equivalently, as a consequence of Proposition \ref{prop:RAGGhyp}, that $\Phi(G)=\{ \mathrm{Id} \}$. 
\end{proof}

\noindent
The next observation will be fundamental in our proof:

\begin{lemma}\label{lem:TransferFactor}
Let $\mathfrak{G}$ be a right-angled graph of groups, and $e,f \subset \mathfrak{X}$ two edges. Let $A,B$ denote the two factors labelling $e,f$ respectively, and let $u,v$ denote the vertices of the graph of $\mathfrak{G}$ such that $A$ and $B$ are factors of $G_u$ and $G_v$ respectively. If $e$ and $f$ are dual to the same hyperplane, then there exists a path $\gamma$ in the graph of $\mathfrak{G}$ from $u$ to $v$ such that $\varphi_\gamma(A)=B$.
\end{lemma}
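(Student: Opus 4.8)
The plan is to reduce everything to the explicit description of factor-type hyperplanes given by Proposition~\ref{prop:RAGGhyp}. Since $e$ is labelled by the factor $A$, the hyperplane $J$ dual to $e$ is of factor-type (a hyperplane of arrow-type has all its edges labelled by arrows), and $f$ is dual to the same $J$. Fixing a clique $C=gG$ dual to $J$, where $G$ is a factor contained in a vertex-group $G_w$ and $g\in\mathfrak{F}$, Proposition~\ref{prop:RAGGhyp} allows me to write
$$e = g(h_1\ell,h_2\ell), \qquad f = g(h_1'\ell',h_2'\ell'),$$
with $h_1\neq h_2$ in $G$, $h_1'\neq h_2'$ in $G$, and $\ell,\ell'\in\mathrm{link}(G)$. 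I let $\delta$ and $\delta'$ be the paths associated to $\ell$ and $\ell'$; both start at the vertex $w$ of the graph of $\mathfrak{G}$.

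The key step is to read off the factors $A$ and $B$ from $\delta$ and $\delta'$. The edge $e$ joins $gh_1\ell$ to $gh_2\ell=(gh_1\ell)\cdot\ell^{-1}(h_1^{-1}h_2)\ell$, so it is labelled by the factor containing $\ell^{-1}(h_1^{-1}h_2)\ell$. Now the definition of $\mathrm{link}(G)$ is tailored exactly so that $a\ell=\ell\,\varphi_\delta(a)$ for every $a\in G$: the factor-syllables of $\ell$ lie in factors adjacent to the successive images of $G$ and hence commute with them, while the arrow-syllables push $a$ through the isomorphisms $\varphi_{e_i}$ via the defining relations $\iota_{e}(x)\cdot e=e\cdot\iota_{\bar e}(x)$ of $\mathfrak{F}$. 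Therefore $\ell^{-1}(h_1^{-1}h_2)\ell=\varphi_\delta(h_1^{-1}h_2)$, a nontrivial element of the factor $\varphi_\delta(G)$; this forces $A=\varphi_\delta(G)$ and $u=t(\delta)$. The same computation applied to $f$ yields $B=\varphi_{\delta'}(G)$ and $v=t(\delta')$.

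It then suffices to take $\gamma:=\bar\delta\cup\delta'$, the concatenation of the reverse path $\bar\delta$ (from $u$ to $w$) with $\delta'$ (from $w$ to $v$); this is a path from $u$ to $v$. Using $\varphi_{\bar e}=\varphi_e^{-1}$ on the relevant domains, one has $\varphi_\gamma=\varphi_{\delta'}\circ\varphi_\delta^{-1}$, and since $\varphi_\delta$ restricts to an isomorphism from $G$ onto $A$, we get $\varphi_\delta^{-1}(A)=G$ and hence $\varphi_\gamma(A)=\varphi_{\delta'}(G)=B$, which is what we wanted.

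I expect the only genuinely delicate point to be the commutation identity $a\ell=\ell\,\varphi_\delta(a)$ and the resulting identification of the edge-label of $e$ with the factor $\varphi_\delta(G)$; the remaining assembly of $\gamma$ is formal manipulation of path morphisms. I would therefore prove that identity first, by induction on the number of syllables of $\ell$, alternately invoking the adjacency condition in the definition of $\mathrm{link}(G)$ and the relations of $\mathfrak{F}$, and only afterwards construct $\gamma$.
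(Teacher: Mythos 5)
Your proof is correct and takes essentially the same route as the paper: both arguments rest on Proposition \ref{prop:RAGGhyp} together with the commutation identity $a\,\ell = \ell\,\varphi_\delta(a)$ for $a$ in the factor and $\ell \in \mathrm{link}(G)$. The only difference is cosmetic: the paper takes the reference clique to be the clique containing $e$ itself, so a single application of Proposition \ref{prop:RAGGhyp} writes $f = p(a_1\ell, a_2\ell)$ with $\ell \in \mathrm{link}(A)$, the computation $b = \varphi_\ell(a_1^{-1}a_2)$ identifies $B$ directly, and $\gamma$ is just the path associated to $\ell$ --- avoiding your intermediate clique $gG$ and the concatenation $\bar\delta \cup \delta'$.
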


\begin{proof}
Write $e=(p,pa)$ and $f=(q,qb)$ where $a \in A$ and $b \in B$. As a consequence of Proposition \ref{prop:RAGGhyp}, $f=p(a_1 \ell, a_2 \ell)$ for some distinct $a_1,a_2 \in A$ and some $\ell \in \mathrm{link}(A)$. We have
$$b = q^{-1} \cdot qb = \ell^{-1} a_1^{-1}p^{-1} \cdot pa_2\ell = \varphi_{\ell} \left( a_1^{-1}a_2 \right) = \varphi_\gamma \left( a_1^{-1}a_2 \right)$$
where $\gamma$ is the path in the graph of $\mathfrak{G}$ associated to $\ell$. Because $\varphi_\gamma$ sends a factor to the empty set or to another factor, we conclude that $\gamma$ is a path from $u$ to $v$ and that $\varphi_\gamma(A)=B$, as desired. 
\end{proof}

\noindent
Now we are ready to determine when the action of the fundamental group of a right-angled graph of groups on its quasi-median graph is hyperplane-special.

\begin{lemma}\label{lem:WhenHypSpecial}
Let $\mathfrak{G}$ be a right-angled graph of groups. The action of the fundamental group $\mathfrak{F}_\omega$ of $\mathfrak{G}$ on $\mathfrak{X}(\mathfrak{G},\omega)$ is hyperplane-special if and only if the following conditions are satisfied: 
\begin{itemize}
	\item[(i)] for every factor $G$ and every cycle $c$ in the graph of $\mathfrak{G}$ based at the vertex-group containing $G$, $\varphi_c(G) = \emptyset$ or $G$;
	\item[(ii)] there does not exist two vertices $u,v$ in the graph of $\mathfrak{G}$, two paths $\alpha,\beta$ from $u$ to $v$, two commuting factors $A_1,A_2 \subset G_u$ and two non-commuting factors $B_1,B_2 \subset G_v$ such that $\varphi_\alpha(A_1)=B_1$ and $\varphi_\beta(A_2)= B_2$;
	\item[(iii)] in the graph of $\mathfrak{G}$, an edge have distinct endpoints.
\end{itemize}
\end{lemma}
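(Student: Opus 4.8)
The plan is to unfold the definition of hyperplane-special into its two constituent conditions and to match each against (i)--(iii), exploiting the dichotomy between arrow-type and factor-type hyperplanes. Recall that a translate of an arrow-type (resp.\ factor-type) hyperplane is again of arrow-type (resp.\ factor-type), so every self-configuration stays within a single type. I would record at the outset the three tools that carry the whole argument: the carrier description $N(J)=gG\cdot\mathrm{link}(G)$ from Proposition~\ref{prop:RAGGhyp} (which says exactly which cliques, and hence which factor labels $\varphi_\gamma(G)$, appear along a factor-type hyperplane), the transfer Lemma~\ref{lem:TransferFactor} (which converts ``two edges lie in a common hyperplane'' into ``there is a path $\gamma$ with $\varphi_\gamma(A)=B$''), and the facts that two arrow-type hyperplanes are never transverse and that an arrow-type hyperplane has exactly two fibers, both $\mathrm{stab}$-invariant (Lemma~\ref{lem:RAGGhypE}). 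With these in hand, the first bullet of hyperplane-special (no hyperplane is transverse or tangent to a translate of itself) should split as (iii) for arrow-type and (i) for factor-type, while the second bullet (no tangency between a hyperplane and a translate of a transverse partner) should reduce to (ii).

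For the equivalence between (iii) and the absence of arrow-type self-osculation, I would argue by contraposition. If an arrow $e$ is a loop, $s(e)=t(e)=v$, then from any vertex $g$ of terminus $v$ the two $e$-edges $(g,ge)$ and $(ge,gee)$ share the vertex $ge$ yet are dual to distinct hyperplanes (arrow-type cliques are single edges by Lemma~\ref{lem:RAGGclique}, so edges sharing a vertex cannot be parallel); conjugation by $geg^{-1}\in\mathfrak{F}_\omega$ carries the first to the second, so these are two meeting translates of one arrow-type hyperplane, necessarily tangent since arrow-types are never transverse. Conversely, if no arrow is a loop, I would show that any two $e$-edges whose carriers meet must lie in the same hyperplane, so no arrow-type hyperplane meets a distinct translate of itself.

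For (i), the key computation is that when $\varphi_c(G)=G'$ for a cycle $c$ based at the vertex-group of $G$ with $G'\neq\emptyset$, one may realise $c$ (with trivial syllables) as an element $\ell\in\mathrm{link}(G)$ with $\varphi_\ell(G)=G'$, whence by Proposition~\ref{prop:RAGGhyp} the hyperplane $J$ dual to the clique $G$ also contains the clique $\ell G'$. If moreover $G'\neq G$, then the hyperplane $\hat J$ dual to the clique $G'$ is a genuine translate of $J$ (one checks $\hat J=xJ$ for a suitable $x\in\mathfrak{F}_\omega$ with $x\ell\in G'$), and $J,\hat J$ both contain the base vertex; being distinct they are tangent or transverse, violating the first bullet. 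This gives hyperplane-special $\Rightarrow$ (i). For the converse, if a factor-type $J$ (dual to $gG$) meets a distinct translate $xJ$, the transfer Lemma~\ref{lem:TransferFactor} applied to edges of the two carriers produces a cycle $c$ based at the vertex-group of $G$ with $\varphi_c(G)$ a factor of the same vertex-group, and the hypothesis that $J\neq xJ$ forces $\varphi_c(G)\neq G$ --- exactly what (i) forbids.

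Finally, for (ii) I would treat the second bullet. Its heart is the case of two \emph{factor-type} hyperplanes: transversality of the partners corresponds, inside a leaf, to a pair of commuting factors $A_1,A_2$, while a tangency between $J_1$ and a translate $xJ_2$ is converted by the transfer lemma into two paths $\alpha,\beta$ with $\varphi_\alpha(A_1)=B_1$ and $\varphi_\beta(A_2)=B_2$, the non-transversality of the tangent pair forcing $B_1,B_2$ to be non-commuting --- which is precisely the configuration excluded by (ii). The remaining type-combinations (where an arrow-type hyperplane sits in the transverse pair) I would dispatch directly from Lemma~\ref{lem:RAGGhypE} and the non-transversality of arrow-type hyperplanes, showing they cannot produce a forbidden tangency. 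The hard part will be this last step: faithfully translating the geometric transverse/tangent relations into the path-algebraic statements about the maps $\varphi_\gamma$ while keeping track of the several distinct factor labels $\varphi_\gamma(G)$ that may decorate a single factor-type hyperplane, and correctly separating ``tangent'' from ``transverse'' through the commuting/non-commuting dichotomy of factors --- this is where the precise formulation of (ii), with two different paths and two different factors, becomes essential.
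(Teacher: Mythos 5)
Your overall frame (splitting by arrow-type versus factor-type hyperplanes, using Lemma \ref{lem:TransferFactor} as the dictionary between geometry and the maps $\varphi_\gamma$) matches the paper's strategy, and your treatments of (iii) and of the factor-type part of the first bullet are essentially the paper's Case 1 together with the explicit converse constructions. But there is a concrete error in your handling of the second bullet: the mixed-type configurations \emph{can} produce a forbidden tangency, and they cannot be ``dispatched directly from Lemma \ref{lem:RAGGhypE} and the non-transversality of arrow-type hyperplanes''. If $J_1$ is of arrow-type, $J_2$ of factor-type, $J_1$ and $J_2$ are tangent, and $J_1$ and $gJ_2$ are transverse, nothing about fibers rules this out a priori; the paper (Case 2.3) excludes it only by showing it forces a failure of condition (i): choosing edges $e_1 \subset J_1$, $e_2 \subset J_2$ at a common vertex and a square spanned by $f_1 \subset J_1$, $f_2 \subset gJ_2$, the transfer lemma yields a loop $c$ with $\varphi_c(A)=B$, and the reason $A \neq B$ is that $A$ does not lie in the image of $\iota_e$ (no square at the tangency) while $B$ does (there is a square at the transversality). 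Similarly, when $J_2$ is of arrow-type (so $J_1$ is of factor-type, since two arrow-type hyperplanes are never transverse), the exclusion is not formal: the paper (Case 2.1) must first use the terminus structure of crossing an arrow-type hyperplane and Lemma \ref{lem:OrbitsInX} to find $h \in \mathfrak{F}_\omega$ bringing $N(hJ_1)$ and $N(hgJ_2)$ to a common vertex with $N(J_1) \cap N(J_2)$, and then invoke the already-established self-intersection case (i.e., conditions (i) and (iii)) to conclude $hJ_1=J_1$ and $hgJ_2=J_2$, whence the transverse-versus-tangent contradiction. So your clean decomposition ``first bullet $\leftrightarrow$ (i)+(iii), second bullet $\leftrightarrow$ (ii)'' is false as stated: violations of the second bullet involving an arrow-type hyperplane trace back to (i) and (iii), and the forward direction of the proof genuinely needs these cross-dependencies.

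A second gap: for (ii) you only describe the conversion from a bad configuration to the paths $\alpha,\beta$ (one half of the equivalence); you never indicate how, given a violation of (ii), to build an actual tangent pair $J_1,J_2$ together with $g \in \mathfrak{F}_\omega$ making $J_1$ and $gJ_2$ transverse. This is the most technical construction in the paper's proof: starting from a vertex $h$ with terminus $v$ and nontrivial $p \in B_1$, $b \in B_2$, $a \in A_1$, one chains explicit squares along $\beta$ and then $\alpha$, checks that $(h,hb)$ and $(h,hp)$ do not span a square (non-commutation of $B_1,B_2$) while $h\beta$, $hb\beta$, $h\beta a$ span one (commutation of $A_1,A_2$, via $hb\beta a = h\beta a \varphi_\beta(b)$), and finally verifies that $g:= h\beta\alpha h^{-1} \in \mathfrak{F}_\omega$ sends the hyperplane dual to $(h,hp)$ to the one transverse to $J_1$, using $h\beta a\alpha = h\beta\alpha\varphi_\alpha(a)$ with $p, \varphi_\alpha(a) \in B_1$. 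Without some such construction the ``only if'' direction of the lemma is unproved for (ii); flagging the translation between geometry and path algebra as ``the hard part'' does not substitute for it.
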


\begin{proof}
First, assume that the action of the fundamental group of $\mathfrak{G}$ on $\mathfrak{X}$ is not hyperplane-special. There are several cases to consider.

\medskip \noindent
\textbf{Case 1:} There exist a hyperplane $J$ of $\mathfrak{X}$ and an element $g \in \mathfrak{F}_\omega$ such that $gJ$ and $J$ are transverse or tangent.

\medskip \noindent
It is clear that, if there exist two distinct intersecting edges which are labelled by the same arrow, then this arrow provides an edge of the graph of $\mathfrak{G}$ whose endpoints coincide, so that $(iii)$ does not hold. So, from now on, we assume that $J$ is of factor-type. Fix two distinct edges $e_1 \subset J$ and $e_2 \subset gJ$ which share their initial point, and let $A,B$ denote the distinct factors which label them. Notice that, because $e_1$ and $e_2$ intersect, our factors $A$ and $B$ belong to the same vertex-group of $\mathfrak{G}$, say $G_u$. Because $ge_1$ is labelled by the factor $A$ and is dual to the same hyperplane as $e_2$, namely $gJ$, it follows from Lemma \ref{lem:TransferFactor} that there exists in the graph of $\mathfrak{G}$ a loop $c$ based at $u$ such that $\varphi_c(A)=B$. In particular, $\varphi_c(A)$ is neither empty nor $A$, contradicting $(i)$. 

\medskip \noindent
\textbf{Case 2:} There exist two tangent hyperplanes $J_1,J_2$ of $\mathfrak{X}$ and an element $g\in \mathfrak{F}_\omega$ such that $J_1$ and $g J_2$ are transverse. 

\medskip \noindent
We distinguish three cases, depending on whether $J_1$ and $J_2$ are of arrow-type or of factor-type.

\medskip \noindent
\textbf{Case 2.1:} $J_2$ is of arrow-type.

\medskip \noindent
Fix a geodesic $\gamma \subset N(J_1)$ whose initial point belongs to $N(J_1) \cap N(J_2)$ and whose last edge is dual to $gJ_2$. Crossing $J_2$ corresponds to right-multiplying by the arrow $e$ (or its inverse) which labels $J_2$. But such a multiplication is allowed only if the element of the groupoid we are considering has its terminus which is an endpoint of $e$ (the initial or terminal point of $e$ depending on whether we are multiplying by $e$ or $e^{-1}$). Consequently, the initial point of $\gamma$ and one of the last two points of $\gamma$ have the same terminus. According to Lemma~\ref{lem:OrbitsInX}, these two points belong to the same $\mathfrak{F}_\omega$-orbit. So there exists some $h \in \mathfrak{F}_\omega$ such that the initial point of $\gamma$ belongs to $N(hgJ_2) \cap N(hJ_1)$. We already know from Case 1 that, if $J_1$ and $hJ_1$ are tangent or transverse, then $(i)$ cannot hold, so (since their carriers intersect) we suppose that they coincide. Similarly, we suppose that $J_2=hgJ_2$. As $gJ_2$ and $J_1$ are transverse, it follows that $hgJ_2$ and $hJ_1$ must be transverse as well; but $J_2$ and $J_1$ are tangent, a contradiction.

\medskip \noindent
\textbf{Case 2.2:} $J_1$ and $J_2$ are both of factor-type.

\medskip \noindent
Fix two edges $e_1 \subset J_1$ and $e_2 \subset J_2$ which share their initial point, and let $A_1$ and $A_2$ denote the factors which label them respectively. Notice that, because $e_1$ and $e_2$ intersect, $A_1$ and $A_2$ belong to the same vertex-group of $\mathfrak{G}$, say $G_u$. Moreover, because $e_1$ and $e_2$ do not span a square, $A_1$ and $A_2$ do not commute in the graph product $G_u$. Next, fix two edges $f_1 \subset J_1$ and $f_2 \subset gJ_2$ which share their initial endpoint and which span a square, and let $B_1$ and $B_2$ denote the factors which label them respectively. Notice that, because $f_1$ and $f_2$ intersect, $B_1$ and $B_2$ belong to the same vertex-group of $\mathfrak{G}$, say $G_v$. Moreover, because $f_1$ and $f_2$ span a square, $B_1$ and $B_2$ commute in the graph product $G_v$. As $e_1$ and $f_1$ are dual to the same hyperplane, namely $J_1$, it follows from Lemma \ref{lem:TransferFactor} that there exists a path $\alpha$ in the graph of $\mathfrak{G}$ from $u$ to $v$ such that $\varphi_\alpha(A_1)=B_1$. Similarly, because $f_2$ and $ge_2$ are dual to $gJ_2$, there exists a path $\beta$ from $u$ to $v$ such that $\varphi_\beta(A_2)=B_2$. We conclude that $(ii)$ does not hold.

\medskip \noindent
\textbf{Case 2.3:} $J_1$ is of arrow-type and $J_2$ of factor-type.

\medskip \noindent
Fix two edges $e_1 \subset J_1$ and $e_2 \subset J_2$ which share their initial point. Let $A$ denote the factor labelling $e_2$ and $e$ the arrow labelling $e_1$. Also, fix two edges $f_1 \subset J_1$ and $f_2 \subset gJ_2$ which share their initial point and which span a square. Notice that $f_1$ is labelled by $e$ or $\bar{e}$. We choose $f_1$ and $f_2$ such that their common initial point belongs to the same sector delimited by $J_1$ as the initial point of $e_1$ and $e_2$. As a consequence, $f_1$ is labelled by $e$, and the factor, say $B$, which labels $f_2$ belongs to the same vertex-group of $\mathfrak{G}$ as $A$, say $G_u$. As the edges $ge_2$ and $f_2$ are both dual to $gJ_2$, it follows from Lemma \ref{lem:TransferFactor} that there exists a loop $c$ in the graph of $\mathfrak{G}$, based at $u$, such that $\varphi_c(A)=B$. Notice that, because $e_1$ and $e_2$ do not span a square, necessarily $A$ does not belong to the image of $G_e$ in $G_u$. On the other hand, because $f_1$ and $f_2$ span a square, necessarily $B$ belongs to the image of $G_e$ in $G_u$. Consequently, $A \neq B$. We conclude that $\varphi_c(A)$ is neither empty nor $A$, contradicting $(i)$.

\medskip \noindent
Thus, we have proved that, if the conditions $(i)$, $(ii)$ and $(iii)$ of our proposition hold, then the action of $\mathfrak{F}_\omega$ on $\mathfrak{X}$ is hyperplane-special. Conversely, assume that one of the conditions $(i)$, $(ii)$ and $(iii)$ does not hold. 

\medskip \noindent
If $(i)$ does not hold, then there exist a loop $c$ in the graph of $\mathfrak{G}$ based at some vertex $u$ and a factor $G$ in the graph product $G_u$ such that $\varphi_c(G)$ is a factor of $G_u$ distinct from $G$. Fix an arbitrary vertex $h \in \mathfrak{X}$ whose terminus is $u$ (for instance, a concatenation of arrows from $\omega$ to $u$). Also, fix a non-trivial element $g \in G$ and write $c$ as a concatenation of arrows $e_1 \cup \cdots \cup e_n$. Notice that, for every $0 \leq i \leq n$, we have
$$hge_1 \cdots e_i = h e_1 \cdots e_i \varphi_{e_1 \cup \cdots \cup e_i}(g) \ \text{where} \ \varphi_{e_1 \cup \cdots \cup e_i} (g) \neq 1,$$
so $he_1 \cdots e_i$ and $hge_1 \cdots e_i$ are adjacent vertices. As a consequence, for every $0 \leq i \leq n-1$, the four vertices $he_1 \cdots e_i$, $hge_1 \cdots e_i$, $he_1 \cdots e_{i+1}$ and $hge_1 \cdots e_{i+1}$ span a square in $\mathfrak{X}$, so the two edges $(h,hg)$ and $(he_1 \cdots e_n, hge_1 \cdots e_n)$ are dual to the same hyperplane, say $J$. But $he_1 \cdots e_n h^{-1} \in \mathfrak{F}_\omega$ sends the edge $(h,gh)$ to the edge $(he_1 \cdots e_n, he_1 \cdots e_ng)$, and the two edges $(he_1 \cdots e_n, hge_1 \cdots e_n)$ and $(he_1 \cdots e_n, he_1 \cdots e_ng)$ are distinct because
$$hge_1 \cdots e_n = h e_1 \cdots e_n \varphi_c(g) \ \text{where $\varphi_c(g) \notin G$}.$$
Therefore, the hyperplanes $he_1 \cdots e_n h^{-1} J$ and $J$ are either tangent or transverse (depending on whether $G$ and $\varphi_c(G)$ commute in $G_u$). So the action of $\mathfrak{F}_\omega$ on $\mathfrak{X}$ is not hyperplane-special.

\medskip \noindent
If $(ii)$ does not hold, then there exist two vertices $u,v$ in the graph of $\mathfrak{G}$, a path $\alpha$ from $u$ to $v$, a path $\beta$ from $v$ to $u$, two commuting factors $A_1,A_2 \subset G_u$ and two non-commuting factors $B_1,B_2 \subset G_v$ such that $\varphi_\alpha(A_1)=B_1$ and $\varphi_\beta(B_2)=A_2$. Fix an arbitrary vertex $h$ of $\mathfrak{X}$ whose terminus is $v$ (for instance, a concatenation of arrows from $\omega$ to $v$) and non-trivial elements $p \in B_1$, $b \in B_2$, $a \in A_1$. Also, write $\alpha$ as the concatenation of arrows $a_1 \cdots a_s$ and $\beta$ as $b_1 \cdots b_r$. Notice that, for every $0 \leq i \leq r$, the vertices $hb_1 \cdots b_i$ and $hbb_1 \cdots b_i$ are adjacent as
$$hbb_1 \cdots b_i = h b_1 \cdots b_i \varphi_{b_1 \cup \cdots \cup b_i}(b) \ \text{where} \ \varphi_{b_1 \cup \cdots \cup b_i} (b) \neq 1.$$
Consequently, for every $0 \leq i \leq r-1$, the vertices $hb_1 \cdots b_i$, $hbb_1 \cdots b_i$, $hb_1 \cdots b_{i+1}$ and $hbb_1 \cdots b_{i+1}$ span a square. See Figure \ref{HypSpecial}. Similarly, for every $0 \leq i \leq s$, the vertices $h \beta a_1 \cdots a_i$ and $h \beta a a_1 \cdots a_i$ are adjacent because
$$h \beta a a_1 \cdots a_i = h \beta a_1 \cdots a_i \varphi_{a_1 \cup \cdots \cup a_i}(a) \ \text{where} \ \varphi_{a_1 \cup  \cdots \cup a_i}(a) \neq 1;$$
so, for every $0 \leq i \leq s-1$, the vertices $h\beta a_1 \cdots a_i$, $h \beta a_1 \cdots a_{i+1}$, $h \beta a a_1 \cdots a_i$ and $h \beta a a_1 \cdots a_{i+1}$ span a square. Notice that the edges $(h,hb)$ and $(h,hp)$ do not span a square because $B_2$ and $B_1$ do not commute, so the hyperplane $J_1$ dual to $(h,hb)$ is tangent to the hyperplane $J_2$ dual to $(h,hp)$. Next, because $A_1$ and $A_2$ commute, we have
$$hb \beta a = h \beta \varphi_\beta(b) a = h \beta a \varphi_\beta(b) \ \text{where} \ \varphi_\beta(b) \in A_2 \backslash \{ 1 \},$$
so the vertices $h \beta$, $hb \beta$, $h \beta a$ and $hb \beta a$ span a square. As a consequence, the hyperplane $J_3$ dual to the edge $(h \beta,h\beta a)$ is transverse to $J_1$. Finally, observe that $\beta \alpha$ is a loop based at $v$ in the graph of $\mathfrak{G}$, so $g:= h \beta \alpha h^{-1}$ represents an element of $\mathfrak{F}_\omega$. Moreover, $g(h,hp)= (h \beta \alpha, h \beta \alpha p)$ belongs to the same clique as the edge $(h \beta \alpha, h \beta a \alpha)$ because
$$h \beta a \alpha = h\beta \alpha \varphi_{\alpha}(a) \ \text{and} \ p, \varphi_{\alpha}(a) \in B_1,$$
hence $J_3=gJ_2$. Thus, we have proved that $J_1$ and $J_2$ are tangent but $J_1$ and $gJ_2$ are transverse, showing that the action of $\mathfrak{F}_\omega$ on $\mathfrak{X}$ is not hyperplane-special.
\begin{figure}
\begin{center}
\includegraphics[scale=0.35]{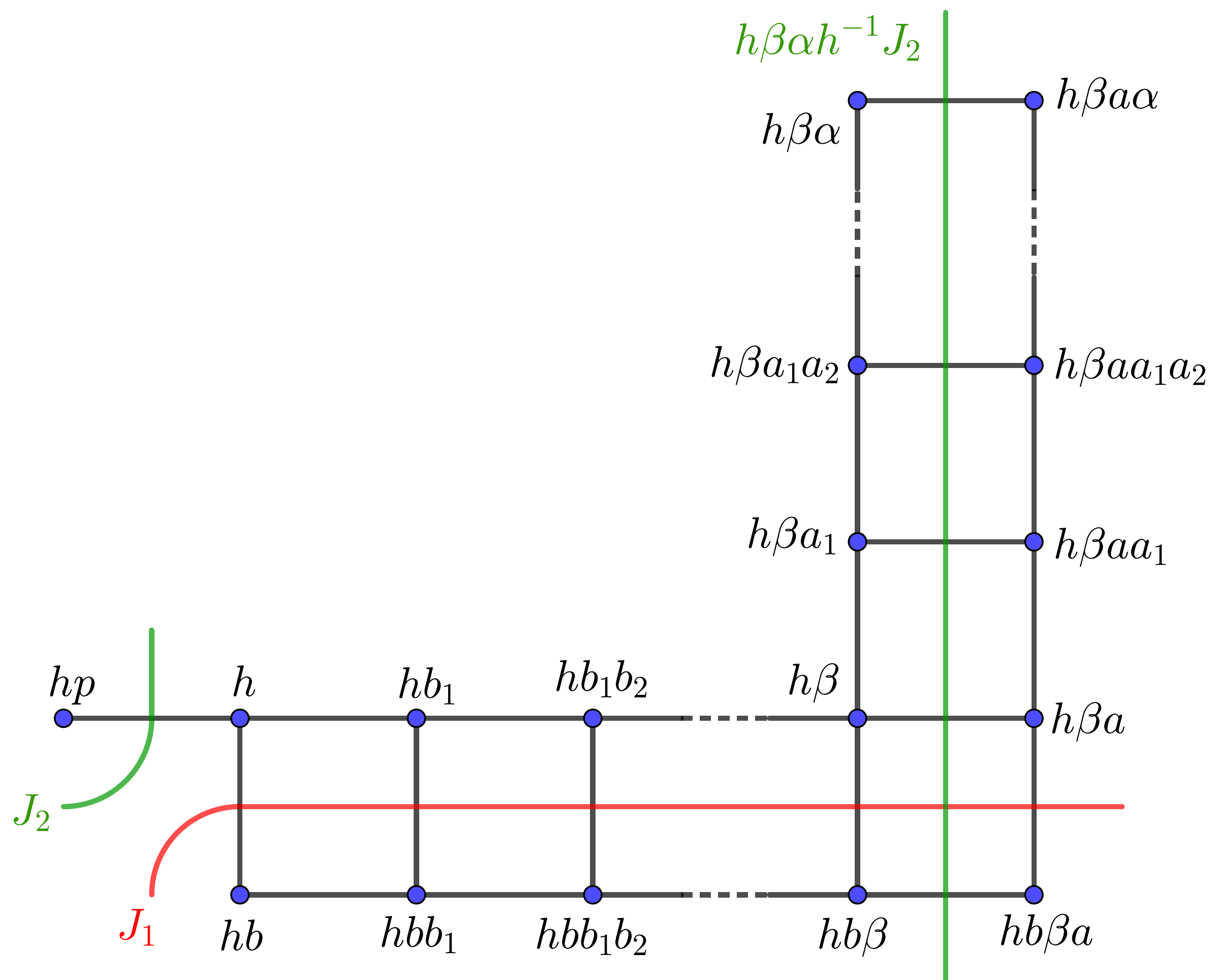}
\caption{Configuration of vertices when $(ii)$ does not hold.}
\label{HypSpecial}
\end{center}
\end{figure}

\medskip \noindent
Finally, if $(iii)$ does not hold, then there exists an arrow $e$ which is a loop based at some vertex $u$ of the graph of $\mathfrak{G}$. Fix an arbitrary vertex $h$ of $\mathfrak{X}$ whose terminus is $u$ (for instance a concatenation of arrows from $\omega$ to $u$). Then $h^{-1} e h$ defines an element of $\mathfrak{F}_\omega$ which acts on the bi-infinite line $\{ he^n \mid n \in \mathbb{Z} \} \subset \mathfrak{X}$ as a translation of length one. Consequently, if $J$ is any hyperplane crossing this line, then $J$ and $h^{-1}eh J$ are tangent, proving that the action of $\mathfrak{F}_\omega$ on $\mathfrak{X}$ is not hyperplane-special.
\end{proof}

\begin{proof}[Proof of Proposition \ref{prop:WhenSpecial}.]
Our proposition is an immediate consequence of Lemmas~\ref{lem:RACGsJ}, \ref{lem:RAGGhypE} and \ref{lem:WhenHypSpecial}.
\end{proof}

\noindent
As a consequence of Proposition \ref{prop:WhenSpecial}, one obtains a sufficient condition which implies that the fundamental group of a right-angled graph of groups embeds into a graph product. Our next proposition describes such a graph product; we refer to Figure \ref{Graph} for an illustration of the graph constructed in its statement. 
\begin{figure}
\begin{center}
\includegraphics[scale=0.32]{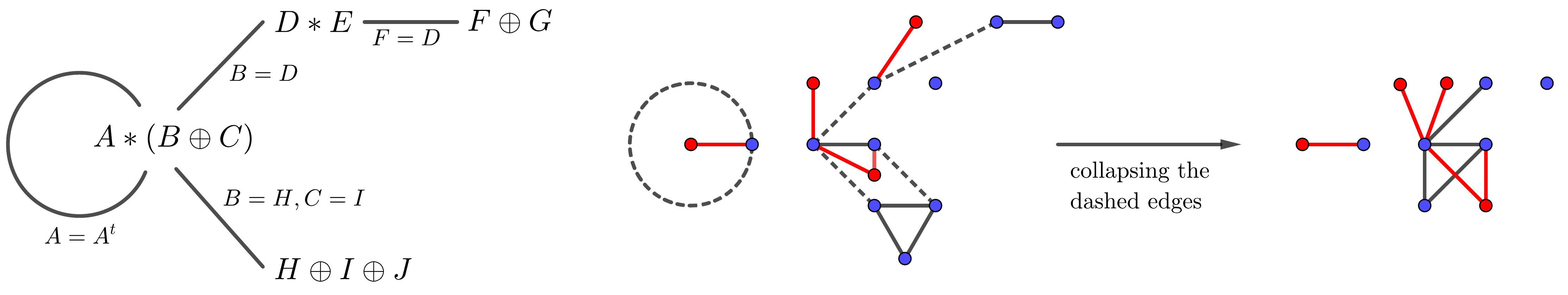}
\caption{A graph of groups and the graph $\Psi$ that Proposition \ref{prop:GraphForEmbedding} associates to it.}
\label{Graph}
\end{center}
\end{figure}

\begin{prop}\label{prop:GraphForEmbedding}
Let $\mathfrak{G}$ be a right-angled graph of groups satisfying the assumptions of Proposition \ref{prop:GraphForEmbedding}. Let $\Gamma$ denote the underlying graph of $\mathfrak{G}$, and, for every vertex $u \in V(\Gamma)$, let $\Gamma_u$ denote the graph corresponding to our decomposition of the vertex-group $G_u$ as a graph product. Given two vertices $a \in \Gamma_u$ and $b \in \Gamma_v$, write $a \sim b$ if $u$ and $v$ are linked by an arrow $e$ in $\Gamma$ and if $\varphi_e$ sends the factor corresponding to $a$ to the factor corresponding to $b$. Let $\Psi$ denote the graph obtained from $\Psi_0:= \left( \bigcup\limits_{u \in V(\Gamma)} \Gamma_u \right) / \sim$ by adding a vertex for each arrow $e \in E(\Gamma)$ and by linking $e$ to each vertex of $\Gamma_{s(e)}$ corresponding to a factor in the image of $\iota_e$. Finally, let $\mathcal{G}$ denote the collection of groups indexed by $V(\Psi)$ such that a group indexed by (the image in $\Psi_0$ of) a vertex of $\Gamma_u$ is the corresponding factor and such that the groups indexed by arrows are cyclic of order two. Then $\mathfrak{F}_\omega$ embeds into $\Psi \mathcal{G}$.
\end{prop}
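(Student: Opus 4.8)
The plan is to apply Theorem \ref{thm:BigEmbedding} and then identify the abstract graph product it produces with $\Psi \mathcal{G}$. By Proposition \ref{prop:WhenSpecial}, conditions $(i)$--$(iv)$ guarantee that the action of $\mathfrak{F}_\omega$ on $\mathfrak{X} := \mathfrak{X}(\mathfrak{G},\omega)$ is special, so Theorem \ref{thm:BigEmbedding} gives an injective morphism $\mathfrak{F}_\omega \hookrightarrow \Gamma' \mathcal{G}'$, where the vertices of $\Gamma'$ are the $\mathfrak{F}_\omega$-orbits of hyperplanes of $\mathfrak{X}$, two orbits being adjacent when they contain transverse representatives, and where the vertex-group attached to an orbit $[J]$ is $\mathfrak{S}(J) \oplus K_{[J]}$ with $|K_{[J]}|$ the number of $\mathfrak{S}(J)$-orbits of sectors. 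It therefore suffices to produce a graph isomorphism $\Gamma' \cong \Psi$ carrying $\mathcal{G}'$ to $\mathcal{G}$.

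Recall that every hyperplane of $\mathfrak{X}$ is either of arrow-type or of factor-type (see Lemma \ref{lem:RAGGclique}), and I would first treat the orbits. For a factor-type hyperplane $J$ dual to a clique $gG$ (with $G$ a factor), left-translation by $\mathfrak{F}_\omega$ preserves the factor labelling the dual cliques; moreover the computation $ae = e\,\varphi_e(a)$ shows that, whenever $\varphi_e(A)=B$ for an arrow $e$, the cliques labelled by $A$ and by $B$ lie on a common hyperplane (via Proposition \ref{prop:RAGGhyp}), while conversely Lemma \ref{lem:TransferFactor} shows that any two factors labelling cliques dual to a single hyperplane are related by some $\varphi_\gamma$. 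Hence the factor content of a factor-type hyperplane is exactly an equivalence class of factors for the closure of the relation $\sim$, and, combined with Lemma \ref{lem:OrbitsInX} (which identifies $\mathfrak{F}_\omega$-orbits of vertices through their terminus), this yields a bijection between $\mathfrak{F}_\omega$-orbits of factor-type hyperplanes and the vertices of $\Psi_0$. Here condition $(i)$ is essential: it prevents two distinct factors of the same vertex-group from being identified by $\sim$, so that each $\Gamma_u$ embeds in $\Psi_0$ and the bijection is clean. For arrow-type hyperplanes, the dual clique is a single edge labelled by an arrow, and the pair $\{e,\bar e\}$ associated to such an edge recovers a geometric edge of $\Gamma$; using Lemma \ref{lem:OrbitsInX} together with condition $(iii)$ (no arrow is a loop) one checks that the $\mathfrak{F}_\omega$-orbits of arrow-type hyperplanes are in bijection with the geometric edges of $\Gamma$, that is, with the arrow-vertices added to build $\Psi$.

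Next I would compute the vertex-groups. For a factor-type hyperplane $J$ dual to $gG$, condition $(iv)$ lets Lemma \ref{lem:RACGsJ} apply: the action $\mathfrak{S}(J) \curvearrowright \mathscr{S}(J)$ is free and transitive with $\mathfrak{S}(J) \cong G$, so there is a single orbit of sectors, $K_{[J]}$ is trivial, and the attached vertex-group is $G$ itself. For an arrow-type hyperplane $J$ the dual clique is an edge, so $\mathscr{S}(J)$ has two elements; by Lemma \ref{lem:RAGGhypE} both are fixed by $\mathrm{stab}(J)$, whence $\mathfrak{S}(J)$ is trivial, there are two orbits of sectors, and the attached vertex-group is cyclic of order two. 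This matches $\mathcal{G}$ exactly. Finally I would match the edges: two factor-type hyperplanes admit transverse translates precisely when some representatives are labelled by commuting factors of a common vertex-group $G_u$, which is the adjacency of $\Gamma_u$ passed to the quotient $\Psi_0$; a factor-type and an arrow-type hyperplane admit transverse translates precisely when the factor lies in the image of $\iota_e$ (the square $g, ga, ge, ge\,\varphi_e(a)$ exists iff $a \in \iota_e(G_e)$), which is the linking rule defining $\Psi$; and two arrow-type hyperplanes are never transverse, matching the absence of edges between arrow-vertices. Assembling these identifications gives $\Gamma' \mathcal{G}' \cong \Psi \mathcal{G}$ and hence the desired embedding $\mathfrak{F}_\omega \hookrightarrow \Psi \mathcal{G}$.

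The main obstacle is the orbit/vertex dictionary for factor-type hyperplanes in the second paragraph: one must verify both inclusions (that $\sim$-related factors genuinely share a hyperplane, via $ae = e\,\varphi_e(a)$ and Proposition \ref{prop:RAGGhyp}, and the converse via Lemma \ref{lem:TransferFactor}), and must invoke condition $(i)$ to rule out the collapse of distinct factors inside a single $G_u$. Everything else is a bookkeeping translation of the transversality relations into the combinatorics of $\Psi$.
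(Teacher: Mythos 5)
Your proposal is correct and takes essentially the same route as the paper's proof: invoke Proposition \ref{prop:WhenSpecial} and Theorem \ref{thm:BigEmbedding}, classify the $\mathfrak{F}_\omega$-orbits of hyperplanes (your square-stacking computation $gae_1\cdots e_i = ge_1\cdots e_i\,\varphi_{e_1\cup\cdots\cup e_i}(a)$ together with Lemma \ref{lem:TransferFactor} is exactly the paper's Claim \ref{claim:PathFactor}), match transversality of translates with the edges of $\Psi$ including the $a\in\iota_e(G_e)$ criterion for the factor/arrow case, and compute the vertex-groups via Lemmas \ref{lem:RACGsJ} and \ref{lem:RAGGhypE}. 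The only (harmless) cosmetic difference is that you spell out the role of condition $(i)$ in keeping each $\Gamma_u$ embedded in $\Psi_0$, which the paper leaves implicit.
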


\begin{proof}
As a consequence of Lemma \ref{lem:RAGGclique}, $\mathfrak{F}_\omega$-orbits of cliques in $\mathfrak{X}$ are bijectively indexed by the vertices of $E(\Gamma) \cup \bigcup\limits_{u \in V(\Gamma)} \Gamma_u$. Notice that two hyperplanes labelled by distinct arrows lie in distinct $\mathfrak{F}_\omega$-orbits of hyperplanes and that:

\begin{claim}\label{claim:PathFactor}
Let $A$ and $B$ be two factors respectively in the vertex-groups $G_u$ and $G_v$. Two hyperplanes $J$ and $H$ dual to cliques $C$ and $D$ respectively labelled by $A$ and $B$ belong to the same $\mathfrak{F}_\omega$-orbit if and only if there exists a path $\gamma$ from $u$ to $v$ such that $\varphi_\gamma(A)=B$.
\end{claim} 

\noindent
If $J$ and $H$ belong to the same $\mathfrak{F}_\omega$-orbit, then there exists some $g \in \mathfrak{F}_\omega$ such that $g C$ and $D$ are dual to the same hyperplane, namely $H$. The desired conclusion follows from Lemma \ref{lem:TransferFactor}. 

\medskip \noindent
Conversely, assume that there exists a path $\gamma$ from $u$ to $v$ such that $\varphi_\gamma(A)=B$. Write $C=gA$ and $D=hB$ for some $g$ and $h$, and write $\gamma$ as a concatenation of arrows $e_1 \cup \cdots \cup e_k$. Also fix a non-trivial element $a \in A$. Notice that, for every $0 \leq i \leq k$, the vertices $ge_1 \cdots e_i$ and $gae_1 \cdots e_i$ are adjacent because
$$gae_1 \cdots e_i= ge_1 \cdots e_i \varphi_{e_1 \cup \cdots \cup e_i}(a) \ \text{where} \ \varphi_{e_1 \cup \cdots \cup e_i}(a) \neq 1.$$
Consequently, for every $0 \leq i \leq k-1$, the vertices $ge_1 \cdots e_i$, $gae_1 \cdots e_i$, $ge_1 \cdots e_{i+1}$ and $gae_1 \cdots e_{i+1}$ span a square. It follows that the edges $(g,ga) \subset C$ and $(ge_1 \cdots e_k,gae_1 \cdots e_k)$ are dual to the same hyperplane, namely $J$. By noticing that
$$gae_1 \dots e_k = g e_1 \cdots e_k \varphi_{e_1 \cup \cdots \cup e_k}(a) \ \text{where} \ \varphi_{e_1 \cup \cdots \cup e_k}(a)= \varphi_\gamma(a) \in B,$$
we deduce that the edge $(ge_1 \cdots e_k, gae_1 \cdots e_k)$ is a translate an edge of the clique $D$. As a consequence, $J$ and $H$ belong to the same $\mathfrak{F}_\omega$-orbit, concluding the proof of our claim.

\medskip \noindent
So far, we have proved that the $\mathfrak{F}_\omega$-orbits of hyperplanes in $\mathfrak{X}$ are bijectively indexed by the vertices of $\Psi$. Next, notice that two hyperplanes of arrow-type cannot be transverse. Moreover:

\begin{claim}
Let $A$ and $B$ be two factors respectively in the vertex-groups $G_u$ and $G_v$. Two hyperplanes $J$ and $H$ dual to cliques $C$ and $D$ respectively labelled by $A$ and $B$ admit transverse $\mathfrak{F}_\omega$-translates if and only if there exists a path $\gamma$ from $u$ to $v$ such that $\varphi_\gamma(A)$ and $B$ are two distinct commuting factors.
\end{claim}

\noindent
If $J$ and $H$ admit transverse $\mathfrak{F}_\omega$-translates, then there exists an element $g \in \mathfrak{F}_\omega$ and a clique $E \subset \mathfrak{X}$ such that $E$ and $D$ span a prism and $gC$ and $E$ are dual to the same hyperplane. From the former assertion, we deduce that the factors labelling $E$ and $D$ belong to the same vertex-group of $\mathfrak{G}$, namely $G_v$, that they are distinct and that they commute; and from the latter assertion, as a consequence of Claim \ref{claim:PathFactor}, we know that there exists a path $\gamma$ from $u$ to $v$ such that $\varphi_\gamma(A)$ labels the clique $E$. Therefore, $\varphi_\gamma(A)$ commutes with $B$, as desired.

\medskip \noindent
Conversely, assume that there exists a path $\gamma$ from $u$ to $v$ such that $\varphi_\gamma(A)$ and $B$ are two distinct commuting factors. Clearly, there exists a clique $E \subset \mathfrak{X}$ labelled by $\varphi_\gamma(A)$ such that $E$ and $D$ span a prism. It follows from Claim \ref{claim:PathFactor} that the hyperplanes dual to the cliques $C$ and $E$ belong to the same orbit. Consequently, $J$ and $H$ admit transverse translates, concluding the proof of our claim.

\begin{claim}
Let $C$ be a clique of $\mathfrak{X}$ labelled by a factor $A$, say belonging to a vertex-group $G_u$ of $\mathfrak{G}$, and let $H$ be a hyperplane labelled by an arrow $e$. Let $J$ denote the hyperplane dual to $C$. The hyperplanes $J$ and $H$ admit transverse translates if and only if there exists a path $\gamma$ from $u$ to $s(e)$ such that $\varphi_\gamma(A)$ lies in the image of $\iota_e$. 
\end{claim}

\noindent
Assume that $J$ and $H$ admit transverse translates. Then there exist an element $g \in \mathfrak{F}_\omega$ and a clique $E \subset \mathfrak{X}$ such that $E$ spans a prism with an edge labelled by $e$ and such that $E$ and $gC$ are dual to the same hyperplane. From the former assertion, it follows that $E$ is labelled by a factor in $G_{s(e)}$ which is included in the image of $\iota_e$; and from the latter assertion, as a consequence of Claim \ref{claim:PathFactor}, we deduce that there exists a path $\gamma$ from $u$ to $s(e)$ such that $\varphi_\gamma(A)$ labels the clique $E$. The desired conclusion follows.

\medskip \noindent
Conversely, assume that there exists a path $\gamma$ from $u$ to $s(e)$ such that $\varphi_\gamma(A)$ lies in the image of $\iota_e$. Clearly, there exists a clique $E$ labelled by $\varphi_\gamma(A)$ which spans a prism with an edge labelled by $e$. As a consequence of Claim \ref{claim:PathFactor}, $E$ and $C$ belong to the same orbit. Moreover, any two edges labelled by the same arrow belong to the same orbit of hyperplanes. Consequently, $J$ and $H$ must admit transverse translates, concluding the proof of our claim.

\medskip \noindent
So far, we have proved that $\Psi$ coincides with the graph whose vertices are the orbits of hyperplanes and whose edges link two orbits if they contain transverse hyperplanes. Next, notice that, if $J$ is a hyperplane containing a clique labelled by a factor $G$, then $\mathfrak{S}(J)$ is isomorphic to $G$ and it acts transitively on $\mathscr{S}(J)$ according to Lemma \ref{lem:RACGsJ}. And if $J$ is a hyperplane labelled by an arrow, then, according to Lemma \ref{lem:RAGGhypE}, $\mathfrak{S}(J)$ is trivial and $\mathscr{S}(J)$ has cardinality two. Therefore, the embedding described by our proposition follows from Theorem \ref{thm:BigEmbedding}.
\end{proof}

\subsection{Examples}\label{section:RAGGex}

\noindent
In practice, Proposition \ref{prop:WhenSpecial} most of the time does not apply, its assumptions are just too strong. However, it turns out that the conditions $(i)-(iii)$ are often satisfied \emph{up to a finite cover}, so that the condition $(iv)$ seems to be the central condition of our criterion.

\medskip \noindent
Let $\mathfrak{G}$ be an arbitrary graph of groups and let $\Gamma$ denote its underlying graph. If $\pi : \Gamma' \to \Gamma$ is a cover, then we naturally defines a graph of groups $\mathfrak{G}'$ which has $\Gamma'$ as its underlying graph by defining, for every vertex $u \in V(\Gamma')$ and every edge $e \in E(\Gamma')$, the vertex-group $G_u$ as $G_{\pi(u)}$, the edge-group $G_e$ as $G_{\pi(e)}$ and the monomorphism $\iota_e : G_e \hookrightarrow G_{s(e)}$ as $\iota_{\pi(e)} : G_{\pi(e)} \hookrightarrow G_{s(\pi(e))}$. One obtains a covering of graphs of groups $\mathfrak{G}' \to \mathfrak{G}$ as defined in \cite{MR1239551}, so that the fundamental group of $\mathfrak{G}'$ embeds into the fundamental group of $\mathfrak{G}$; moreover, if $\Gamma' \to \Gamma$ is a finite cover, then the image of this embedding has finite index. (More topologically, one can say that the (finite sheeted) cover $\Gamma' \to \Gamma$ induces a (finite sheeted) cover from the graph of spaces defining $\mathfrak{G}'$ to the graph of spaces defining $\mathfrak{G}$; see \cite{MR564422} for more details on graphs of spaces and their connection with graphs of groups.)

\medskip \noindent
Although taking a well-chosen finite cover of graphs of groups often allows us to apply Proposition \ref{prop:WhenSpecial}, we were not able to prove that this strategy always work, and leave the following question open (for which we expect a positive answer).

\begin{question}
Let $\mathfrak{G}$ be a right-angled graph of groups. Assume that the graph of $\mathfrak{G}$ is finite, that its vertex-groups are graph products over finite graphs, and that $\Phi(G)$ is finite for every factor $G$. Does there exists a finite cover $\mathfrak{G}' \to \mathfrak{G}$ such that $\mathfrak{G}'$ satisfies the assumptions of Proposition \ref{prop:WhenSpecial}?
\end{question}

\noindent
In the rest of the section, we explain how to exploit Proposition \ref{prop:WhenSpecial} in specific examples. The examples of right-angled graphs of groups given below are taken from \cite{Qm}. We emphasize that, as a consequence of Remark \ref{remark:BiggerGroups}, in the embeddings given below, the $\mathbb{Z}_2$ can be replaced with arbitrary non-trivial groups.

\begin{ex}
Given a group $A$, consider the graph of groups with a single vertex, labelled by $A \times A$, and a single edge, labelled by $A$, such that the edge-group $A$ in sent into $A \times A$ first as the left-factor and next as the right-factor. Let $A^\rtimes$ denote the fundamental group of this graph of groups. The group $A^\rtimes$ admits $$\langle A,t \mid [a,tat^{-1}]=1, \ a \in A \rangle$$ as a (relative) presentation. Notice that, if $A$ is infinite cyclic, we recover the group introduced in \cite{MR897431}, which was the first example of fundamental group of a 3-manifold which is not subgroup separable. 

\medskip \noindent
By construction, $A^\rtimes$ is the fundamental group of a right-angled graph of groups, so it acts on a quasi-median graph. However, the conditions $(i)$ and $(iii)$ in Proposition \ref{prop:WhenSpecial} are not satisfied, so this action is not special. Nevertheless, it is sufficient to consider a new graph of groups, which is a $2$-sheeted cover of the previous one. 

\medskip \noindent
More generally, fix another group $B$, and consider the graph of groups which has two vertices, both labelled by $A \times B$, and two edges between these vertices, labelled by $A$ and $B$, such that the edge-group $A$ is sent into the vertex-groups as the left-factor $A$ and such that the edge-group $B$ is sent into the vertex-groups as the right-factor $B$. The fundamental group of this graph of groups is denoted by $A\square B$, and has
$$\langle A,B, t \mid [a,b]= [a,tbt^{-1}]=1, \ a \in A, b \in B \rangle$$
as a (relative) presentation. Observe that $A \square A$ is naturally a subgroup of $A^\rtimes$ of index two, and that the right-angled graph of groups defining $A \square B$ satisfies the assumptions of Proposition \ref{prop:WhenSpecial}. Let $\Gamma$ denote the graph which is a path of length three $a-b-c-d$ and let $\mathcal{G}_{A,B}= \{G_a=\mathbb{Z}_2, G_b=A, G_c=B, G_d= \mathbb{Z}_2 \}$. By applying Proposition~\ref{prop:GraphForEmbedding}, it follows that $A \square B$ embeds into $\Gamma \mathcal{G}_{A,B}$. Such an embedding is given by sending $A \subset A \square B$ to $A \subset \Gamma \mathcal{G}_{A,B}$, $B \subset A \square B$ to $B \subset \Gamma \mathcal{G}_{A,B}$ and $t \in A \square B$ to $xy \in \Gamma \mathcal{G}_{A,B}$ where $x \in G_a,y \in G_b$ are non-trivial. 

\medskip \noindent
Thus, we have found a subgroup $A \square A$ of index two in $A^\rtimes$ and we have constructed an embedding $A\square A \hookrightarrow \Gamma \mathcal{G}_{A,A}$ whose image is a virtual retract. 

\medskip \noindent
Notice that, if we replace the $\mathbb{Z}_2$ with infinite cyclic groups (as allowed by Remark~\ref{remark:BiggerGroups}), then it follows that the group $\mathbb{Z}^\rtimes$ from \cite{MR897431} virtually embeds into the right-angled Artin group defined by a path of length three. Here, we see that taking a finite-index subgroup is necessary as $\mathbb{Z}^\rtimes$ does not embed directly into a right-angled Artin group. Indeed, $\mathbb{Z}^\rtimes$ is two-generated but it is neither abelian nor free \cite{MR634562}. 
\end{ex}

\begin{ex}\label{HNNGP}
The previous example can be generalised in the following way. Consider a graph product $\Gamma \mathcal{G}$ and fix two vertices $u,v \in V(\Gamma)$ such that there exists an isomorphism $\varphi : G_u \to G_v$. The HNN extension $G:= \Gamma \mathcal{G} \ast_\varphi$ is a simple example of a fundamental group of right-angled graph of groups. Notice that $G$ contains a subgroup of index two $H$ which decomposes as a graph of groups with two vertices, both labelled by $\Gamma \mathcal{G}$; with two edges between these vertices, both labelled by $G_u$; such that one edge-group is sent into the first $\Gamma \mathcal{G}$ as $G_u$ and into the second $\Gamma \mathcal{G}$ as $G_v$ (through $\varphi$); and such that the second edge-group is sent into the first $\Gamma \mathcal{G}$ as $G_v$ (through $\varphi$) and into the second $\Gamma \mathcal{G}$ as $G_u$. Now Proposition \ref{prop:WhenSpecial} applies to $H$. Let $\Psi$ denote the graph obtained from two copies of $\Gamma$ by identifying $u,v$ in the first copy of $\Gamma$ respectively with $v,u$ in the second copy of $\Gamma$; and by adding a new neighbor to each of the two vertices in the intersection of the two copies of $\Gamma$. Also, let $\mathcal{H}$ denote the collection of groups indexed by $V(\Psi)$ such that a vertex $w$ of a copy of $\Gamma$ is labelled by $G_w \in \mathcal{G}$ and such that the two new vertices are labelled by $\mathbb{Z}_2$. According to Proposition \ref{prop:GraphForEmbedding}, our group $H$ embeds into $\Psi \mathcal{H}$.

\medskip \noindent
Thus, $\Gamma \mathcal{G} \ast_\varphi$ has a subgroup of index two which embeds (as a virtual retract if $\Gamma$ is finite) into the graph product $\Psi \mathcal{H}$. 

\medskip \noindent
For instance, the HNN extension $$G_{p,q} = \langle t, x_i \ (0 \leq i \leq p-1) \mid tx_0t^{-1}= x_2, x_i^q=[x_i,x_{i+1}]=1 \ (\text{$i$ mod $p$}) \rangle$$ of the Bourdon group $\Gamma_{p,q}$ \cite{MR1445387} has a subgroup of index two which embeds as a convex-cocompact subgroup into the graph product illustrated by Figure \ref{GP} for $p=5$. As an application, it can be deduced from \cite[Theorem 8.35]{Qm} and \cite[Theorem~1.8]{MR2153979} that $G_{p,q}$ is toral relatively hyperbolic. 
\end{ex}
\begin{figure}
\begin{center}
\includegraphics[scale=0.5]{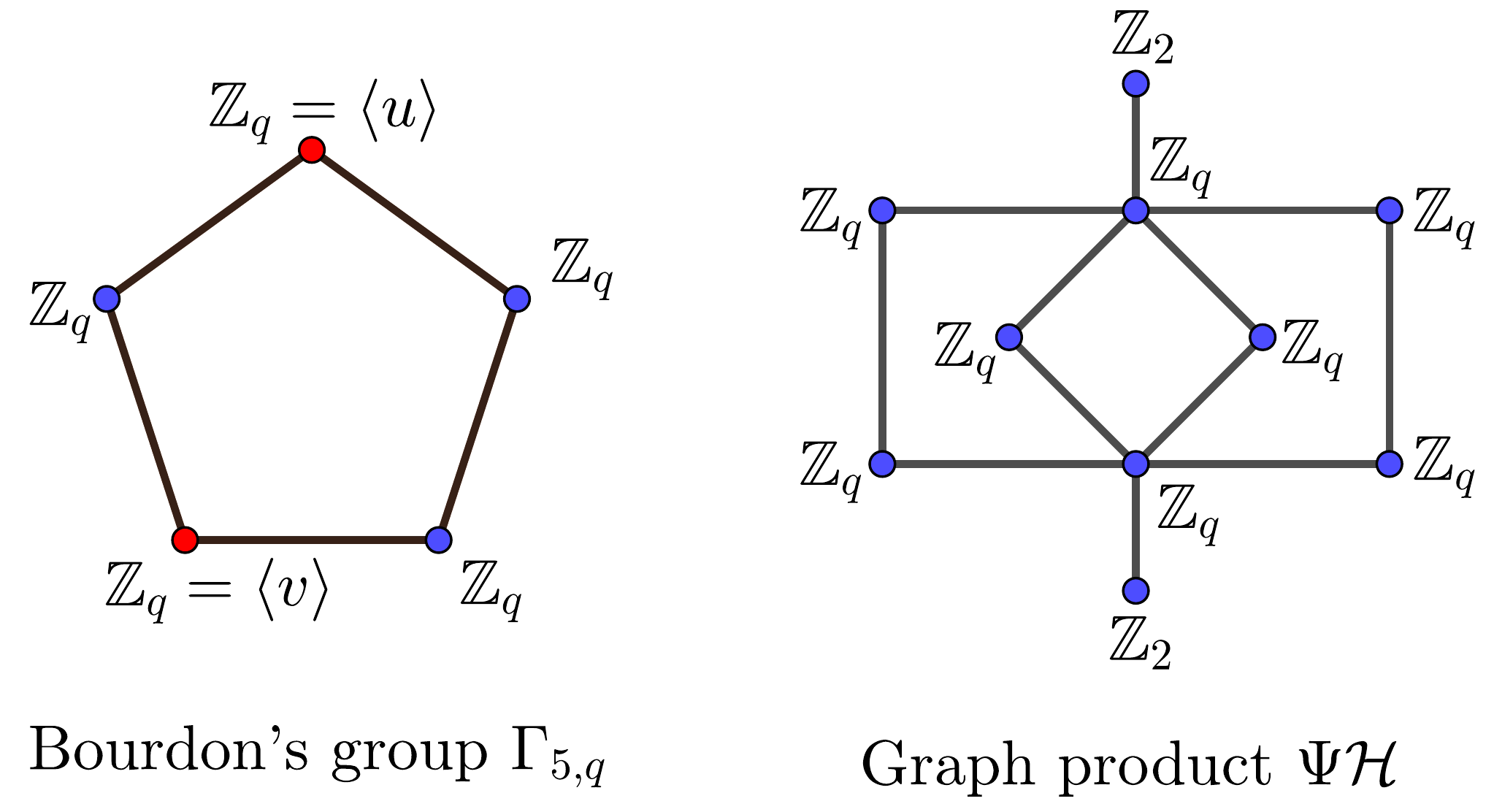}
\caption{The HNN extension $\Gamma_{5,q} \ast_{u^t=v}$ virtually embeds into $\Psi \mathcal{H}$.}
\label{GP}
\end{center}
\end{figure}

\begin{ex}
In our last example, we consider the group operation
$$G \bullet H = \langle G,H,t \mid [g,t^nht^{-n}]=1, \ g \in G, h \in H, n \geq 0 \rangle$$
introduced in \cite{MR1725439}. As observed in \cite{MR3868219}, $\mathbb{Z}\bullet \mathbb{Z}= \langle a,b,t \mid [a,t^nbt^{-n}]=1, n\geq 0 \rangle$ is a simple example of finitely generated but not finitely presented subgroup of $\mathbb{F}_2 \times \mathbb{F}_2$. We would like to generalise such an embedding for arbitrary factors.

\medskip \noindent
The product $G \bullet H$ can be decomposed as a right-angled graph of groups, since, given infinitely many copies $G_n,H_m$ of $G,H$ respectively ($n,m \in \mathbb{Z}$), it admits
$$\left\langle t, H_n,G_m, n,m \in \mathbb{Z} \left| \begin{array}{l} [g_{(n)},h_{(m)}]=1, n \geq m \\ tg_{(n)}t^{-1}=g_{(n+1)}, th_{(m)}t^{-1} = h_{(m+1)}, n,m \in \mathbb{Z} \end{array}, g \in G,h \in H \right. \right\rangle $$
as an alternative (relative) presentation, where $g_{(n)}$ (resp. $h_{(m)}$) denotes the element $g \in G$ in the copy $G_n$ (resp. the element $h \in H$ in the copy $H_m$). However, such a graph of groups (and each of its finite covers) does not satisfy Proposition \ref{prop:WhenSpecial}. So here we have an example of a fundamental group of a right-angled graph of groups for which the methods developed in the article do not work, even though nice embeddings exist (as sketched below).

\medskip \noindent
In order to embed $G \bullet H$ into a graph product, an alternative approach is to consider $G \bullet H$ as a diagram product \cite{MR1725439} and to look at its action on the quasi-median graph constructed in \cite{Qm}. We do not give details here, but the action turns out to be special, and an application of Proposition \ref{prop:GraphForEmbedding} shows that $G \bullet H$ embeds into $(G \ast \mathbb{Z}_2) \times (H \ast \mathbb{Z}_2)$ by sending $G$ to $G$, $H$ to $H$, and $t$ to $yx$ where $x$ (resp. $y$) is a non-trivial element of the left (resp. the right) $\mathbb{Z}_2$. (As a consequence of Remark \ref{remark:BiggerGroups}, the $\mathbb{Z}_2$ can be replaced with infinite cyclic groups, so that we recover the same embedding $\mathbb{Z} \bullet \mathbb{Z} \hookrightarrow \mathbb{F}_2 \times \mathbb{F}_2$ found in \cite{MR3868219}.)
\end{ex}

\addcontentsline{toc}{section}{References}

\bibliographystyle{alpha}
{\footnotesize\bibliography{QMspecial}}

\begin{flushleft}
Anthony Genevois\\
Universit\'e Paris-Saclay, Laboratoire de math\'ematiques d'Orsay, 91405, Orsay, France \\
\emph{e-mail:}\texttt{anthony.genevois@universite-paris-saclay.fr}\\[5mm]
\end{flushleft}

\end{document}